\title{On Baire Measurable Colorings of Group Actions}
\date{}
\author{Anton~Bernshteyn}
\address{Department of Mathematics, University of Illinois at Urbana--Champaign, IL, USA and Department of Mathematical Sciences, Carnegie Mellon University, Pittsburgh, PA, USA}
\email{abernsht@math.cmu.edu}
\thanks{This research is partially supported by the Illinois Distinguished Fellowship.}
\newtheorem{theo}{Theorem}[section]
\newtheorem{prop}[theo]{Proposition}
\newtheorem{lemma}[theo]{Lemma}
\newtheorem{corl}[theo]{Corollary}
\newtheorem{claim}{Claim}[theo]
\theoremstyle{definition}
\newtheorem{defn}[theo]{Definition}
\theoremstyle{remark}
\newtheorem*{remk}{Remark}
\newcommand*{\myproofname}{Proof}
\newenvironment{claimproof}[1][\myproofname]{\begin{proof}[#1]}{\end{proof}}
\newcommand{\0}{\varnothing}
\newcommand{\set}[1]{\{#1\}}
\newcommand{\dom}{\mathrm{dom}}
\newcommand{\supp}{\mathrm{supp}}
\newcommand{\acts}{\curvearrowright}
\newcommand{\N}{\mathbb{N}}
\newcommand{\Z}{\mathbb{Z}}
\newcommand{\R}{\mathbb{R}}
\newcommand{\Rpos}{[0;\infty)}
\renewcommand{\P}{\mathfrak{I}}
\renewcommand{\epsilon}{\varepsilon}
\renewcommand{\phi}{\varphi}
\renewcommand{\theta}{\vartheta}
\renewcommand{\tilde}{\widetilde}
\renewcommand{\leq}{\leqslant}
\renewcommand{\geq}{\geqslant}
\newcommand{\symdif}{\triangle}
\newcommand{\Sol}{\mathbf{Col}}
\newcommand{\finset}[1]{[#1]^{<\infty}}
\newcommand{\finfun}[2]{[#1 \to #2]^{<\infty}}
\newcommand{\pfun}[2]{[#1 \to #2]}
\newcommand{\pcol}[3]{[#1 \to #2]^{#3}}
\newcommand{\rest}[2]{{#1}\vert{#2}}
\newcommand{\G}{\Gamma}
\let\secsign\S
\renewcommand{\S}{\mathbf{Sh}}
\newcommand{\PrCol}{\mathbf{PrCol}}
\newcommand{\defeq}{\coloneqq}
\newcommand{\stareq}{=^\ast}
\newcommand{\BM}{{\operatorname{BM}}}
\newcommand{\subshift}{\Omega}
\newcommand{\FPC}[1]{\mathbf{Fin}({#1})}
\newcommand{\Ball}[2]{\mathbf{Ball}(#1, #2)}
\newcommand{\dist}{\mathbf{dist}}
\newcommand{\loc}[2]{\mathbf{Loc}_{#2}({#1})}
\renewcommand{\succeq}{\succcurlyeq}
\newcommand{\K}{\mathcal{K}}
\newcommand{\forceset}[2]{{\llbracket {#1}, {#2} \rrbracket}}
\newcommand{\Bairealg}[2]{{\mathfrak{Baire}}}
\newcommand{\E}{\mathcal{E}}
\let\bcup\bigcup
\renewcommand{\bigcup}{\textstyle\bcup\nolimits}
\let\bcap\bigcap
\renewcommand{\bigcap}{\textstyle\bcap\nolimits}
\numberwithin{equation}{section}
\newcommand{\neutralize}[1]{\expandafter\let\csname c@#1\endcsname\count@}
\begin{document}
	
	\maketitle
	
	\begin{abstract}
		The field of \emph{descriptive combinatorics} investigates the question, to what extent can classical combinatorial results and techniques be made topologically or measure-theoretically well\-/behaved? This paper examines a class of coloring problems induced by actions of countable groups on Polish spaces, with the requirement that the desired coloring be Baire measurable. We show that the set of all such coloring problems that admit a Baire measurable solution for a particular free action $\alpha$ is complete analytic (apart from the trivial situation when the orbit equivalence relation induced by $\alpha$ is smooth on a comeager set); this result confirms the ``hardness'' of finding a topologically well-behaved coloring. When $\alpha$ is the shift action, we characterize the class of problems for which $\alpha$ has a Baire measurable coloring in purely combinatorial terms; it turns out that closely related concepts have already been studied in graph theory with no relation to descriptive set theory. We remark that our framework permits a wholly dynamical interpretation (with colorings corresponding to equivariant maps to a given subshift), so this article can also be viewed as a contribution to \emph{generic dynamics}.
	\end{abstract}
	
	\section{Introduction}
	
	
	\noindent A typical combinatorial problem is that of \emph{coloring}, i.e., assigning to each element of a given structure (a graph, say) an element of some countable set---a ``color''---in a way that fulfills a specific set of constraints; for instance, one might require the colors of adjacent vertices in a graph to be distinct. Classical combinatorics mostly confines itself to studying colorings of finite structures; this restriction does not usually result in any loss of generality, since coloring an infinite structure, via a straightforward compactness argument, can often be reduced to coloring its finite substructures (see, e.g., the classical theorem of de Bruijn and Erd\H{o}s~\cite[Theorem~1]{dBE51} stating that an infinite graph is $k$-colorable if and only if so is each of its finite subgraphs).
	
	However, an infinite structure is sometimes equipped with a topology or a measure, and it is then natural to ask for colorings which not only satisfy the combinatorial constraints, but also behave well topologically or measure-theoretically; with these additional requirements, compactness can no longer be used to directly reduce the problem to the finite case. As a simple concrete example, fix some irrational $\alpha \in (0;1)$ and consider the graph $\mathcal{G}_\alpha$ with vertex set the half-open interval~$[0;1)$ whose edges connect the pairs of vertices $x$, $y$ with $y - x = \pm \alpha \pmod 1$. Since $\alpha$ is irrational, $\mathcal{G}_\alpha$ is a disjoint union of (continuumly many) paths, infinite in both directions; in particular, there is a vertex coloring $f \colon [0;1) \to 2$ of $\mathcal{G}_\alpha$ such that $f(x) \neq f(y)$ whenever $x$ and $y$ are adjacent. Yet, it is not hard to verify that such a coloring can neither be Lebesgue measurable nor Baire measurable with respect to the usual topology on~$[0;1)$ (see, e.g., \cite[p.~2]{CMT-D16}).
	
	Questions regarding colorings that not only have some nice combinatorial properties but also behave well in the sense of topology or measure are studied in the area of \emph{descriptive combinatorics}, which has recently emerged out of interactions between descriptive set theory, combinatorics (graph theory in particular), and some other fields, such as ergodic theory. A~comprehensive state-of-the-art survey of descriptive combinatorics can be found in~\cite{KM16}.
	
	Since a known result in finite combinatorics cannot be simply turned into a theorem of descriptive combinatorics using compactness, the second best option is to analyse the \emph{proof} of the known result to see if \emph{it} can be adapted to the descriptive setting. This turns out to be a rather fruitful approach. For example, in their seminal paper~\cite{KST99}, Kechris, Solecki, and Todorcevic established~\cite[Proposition~4.6]{KST99} that a Borel graph $\mathcal{G}$ with finite maximum degree $d$ admits a Borel coloring using at most $d+1$ colors such that the colors of adjacent vertices are distinct. For a \emph{finite} graph $\mathcal{G}$, such a coloring can be found ``greedily'': One simply considers the vertices of $\mathcal{G}$ one by one and assigns to each vertex the first color not yet used on any of its neighbors; since the total number of colors exceeds the maximum number of neighbors a vertex can have, there is always at least one color available. In their proof of \cite[Proposition~4.6]{KST99}, Kechris, Solecki, and Todorcevic devised a Borel analog of this ``greedy'' algorithm.
	
	Some techniques in finite combinatorics are more amenable to descriptive generalizations (more \emph{constructive}, one could say) than others. For instance, of the ways of obtaining matchings in graphs, the arguments based on augmenting paths appear to be especially well-suited for the purposes of descriptive combinatorics (see, e.g., \cite{EL10, LN11}). A recent family of examples is formed by measurable versions of the Lov\'asz Local Lemma (see \cite{Ber16, CGMPT16}), made possible by the breakthrough result of Moser and Tardos~\cite{MT10}, who found a new, algorithmic proof of the Lov\'asz Local Lemma.
	
	The above examples suggest that some precise correspondences between results in finite and descriptive combinatorics might still be present; the existence of a well-behaved coloring of a certain kind could, perhaps, be equivalent to a purely combinatorial statement such as the existence of a ``greedy''-like algorithm to find it. One of the main results of this article is Theorem~\ref{theo:combi}, which confirms this suspicion for a particular class of coloring problems and a specific notion of well-behavedness, namely Baire measurability (see Definition~\ref{defn:BM_col}).
	
	
	An ample supply of examples in descriptive combinatorics is provided by actions of countable groups, and that is the convenient framework in which we perform our investigation. (For instance, the graph $\mathcal{G}_\alpha$ defined previously is induced by the action of the additive group $\Z$ on $[0;1)$ given by $n \cdot x \defeq x + n\alpha \pmod 1$.) This article therefore can be considered a contribution to \emph{generic dynamics}; see~\cite{Wei00, SW15} for an introduction to the subject.
	
	
	\subsection*{Basic notation and terminology}\label{subsec:basics}
	
	We use $\N \defeq \set{0, 1, \ldots}$ to denote the set of all nonnegative integers and identify each $k \in \N$ with the set $\set{i \in \N \,:\, i < k}$. The sets $\N$ and $k$ for $k \in \N$ are assumed to carry discrete topologies.
	
	We identify a function $f$ with its graph, i.e., with the set $\set{(x, y) \,:\, f(x) = y}$; this enables the use of standard set-theoretic notation, such as $\cup$, $\cap$, $\subseteq$, etc., for functions. In particular, $\0$ denotes the empty function as well as the empty set. For a function $f$ and a subset $S$ of its domain, $\rest{f}{S}$ denotes the restriction of $f$ to $S$. We write $f \colon X \rightharpoonup Y$ to indicate that $f$ is a partial function from $X$ to $Y$, i.e., a function of the form $f \colon X' \to Y$ with $X' \subseteq X$.
	
	Given sets $A$ and $B$,
	\[
		\begin{array}{rll}
			\text{--} & A^B & \text{denotes the set of all functions $f \colon B \to A$;}\\
			\text{--} & \finset{B} & \text{denotes the set of all finite subsets of $B$;}\\
			\text{--} & \finfun{B}{A} & \text{denotes the set of all partial functions $\phi \colon B \rightharpoonup A$ with $\dom(\phi) \in \finset{B}$.}
		\end{array}
	\]
	
	Whenever we use symbols $\max$, $\min$, $\inf$, and $\sup$, they are applied to subsets of the set $\Rpos$ of nonnegative real numbers. In particular, $\inf \0 = \infty$ and $\sup \0 = 0$.
	
	Our standard references for descriptive set theory are~\cite{Kec95} and~\cite{Tse16}.
	
	A separable topological space is \emph{Polish} if its topology is generated by a complete metric. Note that a compact space is Polish if and only if it is metrizable. Most notions related to Baire category make sense for a wider class of topological spaces (the so-called \emph{Baire spaces}); however, to simplify the matters, we will only talk about Polish spaces here. A subset of a Polish space is \emph{meager} if it can be covered by countably many nowhere dense sets; \emph{nonmeager} if it is not meager; and \emph{comeager} if its complement is meager. We say that two sets $A$ and $B$ are \emph{equal modulo a meager set}, or \emph{$\ast$-equal}, in symbols $A \stareq B$, if their symmetric difference $A \symdif B$ is meager. A set is \emph{Baire measurable} if it is $\ast$-equal to an open set.\footnote{Sometimes meager sets are referred to as sets \emph{of first category}; nonmeager---as \emph{of second category}; comeager---as \emph{residual}; and Baire measurable---as having the \emph{property of Baire}.} The meager sets form a $\sigma$-ideal (i.e., meagerness is a notion of smallness), and the Baire measurable sets form a $\sigma$-algebra, which contains all Borel sets (and much more). The cornerstone result of the Baire category theory is the \emph{Baire category theorem}, which asserts that a nonempty open subset of a Polish space is nonmeager; equivalently, the intersection of countably many dense open subsets of a Polish space is dense. For a Baire measurable set $A$ and a nonempty open set $U$, we say that $U$ \emph{forces} $A$, or $A$ is \emph{comeager in} $U$, in symbols $U \Vdash A$, if the difference $U \setminus A$ is meager. The following way of phrasing the Baire category theorem is rather useful:
	\begin{prop}[{\textbf{Baire alternative}\footnote{This name is due to Anush Tserunyan; see~\cite[Proposition~9.8]{Tse16}.}~\cite[Proposition~8.26]{Kec95}}]
		 A Baire measurable subset of a Polish space is either meager, or else, comeager in some nonempty open set.
	\end{prop}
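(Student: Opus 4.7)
The plan is to unpack the definitions directly; the statement is essentially tautological once one writes out what ``Baire measurable'' means, so there is no real obstacle.

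Let $A$ be a Baire measurable subset of a Polish space $X$. By definition, there exists an open set $U \subseteq X$ with $A \stareq U$, i.e., the symmetric difference $A \symdif U$ is meager. I would then split into two cases based on whether $U$ is empty.

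If $U = \0$, then $A = A \symdif \0 = A \symdif U$ is meager, and we are in the first alternative. If, on the other hand, $U \neq \0$, then since $U \setminus A \subseteq A \symdif U$, the set $U \setminus A$ is meager (meager sets form a $\sigma$-ideal, in particular they are closed under taking subsets). This is exactly the statement that $U \Vdash A$, placing us in the second alternative.

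The only place where anything nontrivial is invoked is implicit: the very fact that a nonempty open set $U$ qualifies as a witness to ``$A$ is comeager in $U$'' requires that $U$ itself not be meager, which is precisely the content of the Baire category theorem for Polish spaces. Beyond this observation, the argument is a one-line consequence of the definition of Baire measurability together with the ideal properties of meager sets, so no further structural analysis of $A$ or $X$ is needed.
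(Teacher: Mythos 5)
Your proof is correct, and it is in fact the standard one-line argument (the paper does not include its own proof of this proposition; it simply cites \cite[Proposition~8.26]{Kec95}, whose proof is exactly what you wrote): write $A \stareq U$ with $U$ open, and split on whether $U$ is empty. One small quibble with your closing paragraph: the Baire category theorem is not actually needed for the statement as phrased. The definition of ``$U \Vdash A$'' only asks that $U \setminus A$ be meager, so any nonempty open $U$ with $A \stareq U$ immediately serves as a witness, with no appeal to BCT. Where BCT genuinely enters is if one wants the dichotomy to be \emph{exclusive} --- that is, to conclude that in the second case $A$ is nonmeager. There, one argues that if $U \Vdash A$ and $A$ were meager, then $U = (U \setminus A) \cup (U \cap A)$ would be meager, contradicting the Baire category theorem for the nonempty open set $U$. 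You did not need (and did not claim) exclusivity, so your proof stands; the remark about BCT is just attached to the wrong step.
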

	\noindent For more background on Baire category, see \cite[Section~8]{Kec95} and \cite[Sections~6 and~9]{Tse16}.
	
	A \emph{standard Borel space} is a set $X$ equipped with a $\sigma$-algebra $\mathfrak{B}(X)$ of its \emph{Borel subsets} that coincides with the Borel $\sigma$\=/algebra generated by some Polish topology on $X$. The \emph{Borel isomorphism theorem} \cite[Theorem~15.6]{Kec95} states that a standard Borel space $X$ is either countable, in which case $\mathfrak{B}(X)$ is the power set of $X$, or else, is isomorphic to every other uncountable standard Borel space.
	
	A~function $f \colon X \to Y$ from a Polish space $X$ to a standard Borel space $Y$ is \emph{Baire measurable} if for all Borel $B \subseteq Y$, the preimage $f^{-1}(B)$ is Baire measurable (as a subset of $X$). 
	
	
	
	\section{Main definitions and statements of results}\label{sec:main}
	
	\subsection{Groups, group actions, and their colorings}
	
	Throughout, $\G$ denotes a countably infinite discrete group with identity element $\mathbf{1}$. We fix an arbitrary proper\footnote{Recall that a metric space is \emph{proper} if every closed and bounded subset of it is compact. For discrete spaces, this is equivalent to saying that every ball of finite radius is a finite set.} right-invariant metric $\dist$ on $\G$. Note that such a metric always exists. Indeed, if $\G$ is finitely generated, then $\dist$ could be the word metric with respect to any finite generating set; in general, one can take
	$$
	\dist(\gamma, \delta) \defeq \min\set{i_1 + \ldots + i_k \,:\, \epsilon^{\pm 1}_{i_1} \cdots \epsilon^{\pm 1}_{i_k} \gamma = \delta},
	$$
	where $\set{\epsilon_1, \epsilon_2, \ldots} = \G$ is an enumeration of the elements of $\G$ in some order. Any two proper right-invariant metrics on $\G$ are coarsely equivalent, so the specific choice of the metric will be irrelevant for our purposes. We use $\Ball{\gamma}{r}$ to denote the (closed) ball of radius $r \in \Rpos$ around $\gamma \in \G$. For $S \subseteq \G$, let $\Ball{S}{r} \defeq \bigcup_{\gamma \in S} \Ball{\gamma}{r}$. For $S$, $T \subseteq \G$, define
	\[
		\dist(S, T) \defeq \inf\set{\dist(\gamma, \delta) \,:\, \gamma \in S,\, \delta \in T}.
	\]
	
	All actions of $\G$ considered here are left actions. Given a set $A$, we equip $A^\G$ with the \emph{shift action} $\sigma_A \colon \G \acts A^\G$, defined by setting, for all $\omega \in A^\G$ and $\gamma \in \G$,
	$$
		(\gamma \cdot \omega)(\delta) \defeq \omega(\delta \gamma) \text{ for all }\delta \in \G.
	$$
	Similarly, $\G$ acts on $\finset{\G}$ and $\finfun{\G}{A}$ by setting, for all $S \in \finset{\G}$, $\phi \in \finfun{\G}{A}$, and $\gamma \in \G$,
	\[
		\gamma \cdot S \defeq \set{\delta\gamma^{-1} \,:\, \delta \in S};
	\]
	\[
	\dom(\gamma \cdot \phi) \defeq \gamma \cdot \dom(\phi)\;\;\;\;\;\;\; \text{and} \;\;\;\;\;\;\; (\gamma \cdot \phi)(\delta) \defeq \phi(\delta \gamma) \text{ for all }\delta \in \dom(\gamma \cdot \phi).
	\]
	
	Note that if $\alpha \colon \G \acts X$ is a continuous action of $\G$ on a Polish space $X$ and $A \subseteq X$ is comeager, then there is a further comeager subset $A' \subseteq A$ that is $\alpha$-invariant, namely $A' \defeq \bigcap_{\gamma \in \G} (\gamma \cdot A)$.
	
	
	To discourse about colorings we need to fix a set of ``colors''; for concreteness, we will use the discrete space $\N$ in that role (although sometimes it might be more convenient to use a different countable discrete space instead; for instance, we use $\N \times \N$ in the proof of Lemma~\ref{lemma:red_to_loc}). By a~\emph{coloring} of a set $S$ we simply mean a map $\omega \colon S \to \N$. A combinatorial coloring problem over $\G$ is meant to specify which colorings of $\G$ are considered ``nice'' or ``acceptable.'' We identify such coloring problems with \emph{subshifts}:
	
	\begin{defn}\label{defn:subshift}
		A \emph{subshift} is a subset of $\N^\G$ that is closed (in the product topology) and invariant under the shift action. The set of all subshifts is denoted by $\S_0(\G, \N)$, and the set of all nonempty subshifts is denoted by $\S(\G, \N)$. 
	\end{defn}
	
	Let $\alpha \colon \G \acts X$ be an action of $\G$ on a set $X$. Each coloring of $X$ then gives rise to a family of colorings of $\G$ parameterized by the elements of $X$. Specifically, given $f \colon X \to \N$ and $x \in X$, we define $\pi_f(x) \colon \G \to \N$ by
	\[
		\pi_f(x)(\gamma) \defeq f(\gamma \cdot x).
	\]
	It is clear that the map $\pi_f \colon X \to \N^\G$ is equivariant, i.e.,
	\[
		\gamma \cdot \pi_f(x) = \pi_f(\gamma \cdot x)
	\]
	for all $x \in X$ and $\gamma \in \G$. Conversely, for each equivariant function $\pi \colon X \to \N^\G$, there is a unique coloring $f \colon X \to \N$ such that $\pi = \pi_f$, namely the one given by $f(x) \defeq \pi(x)(\mathbf{1})$ for all $x \in X$. The map~$\pi_f$ is called the \emph{symbolic representation}, or the \emph{coding map}, for the dynamical system $(X, \G, \alpha, f)$. 
	
	The following definition identifies our main objects of study:
	
	\begin{defn}\label{defn:BM_col}
		Let $\alpha \colon \G \acts X$ be a continuous action of $\G$ on a Polish space~$X$. Given a subshift $\subshift \in \S_0(\G, \N)$, a \emph{Baire measurable $\subshift$-coloring} of $\alpha$ (or of $X$, if $\alpha$ is clear from the context) is a Baire measurable function $f \colon X \to \N$ such that the preimage of $\subshift$ under $\pi_f$ is comeager. The set of all $\subshift \in \S_0(\G, \N)$ such that $\alpha$ admits a Baire measurable $\subshift$-coloring is denoted by $\S_{\BM}(\alpha, \N)$.
	\end{defn}
	
	\begin{remk}
		Clearly, $\S_{\BM}(\alpha, \N) \subseteq \S(\G, \N)$, unless $X = \0$.
	\end{remk}
	
	\begin{remk}
	In view of the bijective correspondence $f \longleftrightarrow \pi_f$ between colorings and equivariant functions, Definition~\ref{defn:BM_col} can be equivalently restated in purely dynamical terms as follows:
	\begin{leftbar}
			\noindent A~continuous action $\alpha \colon \G \acts X$ admits a Baire measurable $\subshift$\=/coloring if and only if there exists a Baire measurable map $\pi \colon X \to \subshift$ which is equivariant on a comeager set.
	\end{leftbar}
	\end{remk}
	
	\subsection{Example: proper graph colorings}
	
	In their seminal paper~\cite{KST99}, Kechris, Solecki, and Todorcevic initiated the study of colorings of graphs that satisfy additional definability constraints, leading to the creation of the field of descriptive combinatorics. Even though our framework concerns groups and group actions rather than graphs, there is a standard way of associating a graph to a (finitely generated) group and to each of its free actions. Namely, assume that $\G$ is generated by a finite symmetric subset $S$ with $\mathbf{1} \not \in S$. The corresponding \emph{Cayley graph} $\operatorname{Cay}(\G, S)$ is the graph with vertex set $\G$ and edge set
	$$
	\set{(\gamma, \delta \gamma) \,:\, \gamma \in \G \text{ and } \delta \in S}.
	$$
	Similarly, for a free\footnote{Recall that an action $\alpha \colon \G \acts X$ is \emph{free} if for all $x \in X$ and $\gamma \in \G$, $\gamma \cdot x = x$ implies $\gamma = \mathbf{1}$.} continuous action $\alpha \colon \G \acts X$ on a Polish space $X$, let $\mathcal{G}(\alpha, S)$ denote the graph \emph{induced} by $\alpha$, i.e., the graph with vertex set $X$ and edge set
	$$
	\set{(x, \delta \cdot x) \,:\, x \in X \text{ and } \delta \in S}.
	$$
	Since $\alpha$ is free, every connected component of $\mathcal{G}(\alpha, S)$ is isomorphic to $\operatorname{Cay}(\G, S)$; specifically, for $x \in X$, the map $\gamma \mapsto \gamma \cdot x$ is an isomorphism from $\operatorname{Cay}(\G, S)$ onto the connected component of $\mathcal{G}(\alpha, S)$ containing $x$ (which coincides with the $\alpha$-orbit of~$x$).
	
	For $k \in \N$, let $\PrCol(k,S)$ denote the set of all \emph{proper $k$-colorings} of $\operatorname{Cay}(\G, S)$, i.e., all functions $\omega \colon \G \to k$ such that $\omega(\gamma) \neq \omega(\delta\gamma)$ whenever $\gamma \in \G$ and $\delta \in S$. It is clear that $\PrCol(k, S)$ is a subshift. The smallest $k$ such that $\PrCol(k,S) \neq \0$ is called the \emph{chromatic number} of $\operatorname{Cay}(\G, S)$ and is denoted by $\chi(\operatorname{Cay}(\G, S))$, or simply $\chi(\G, S)$.
	
	Since every vertex in $\operatorname{Cay}(\G, S)$ has exactly $|S|$ neighbors, it is immediate that $$\chi(\G, S) \leq |S|+1.$$ A cornerstone result in graph theory, so-called Brooks's theorem~\cite[Theorem~5.2.4]{Die00}, implies that, in fact, $$\chi(\G, S) \leq |S| \;\;\;\text{for}\;\;\; |S| \geq 2.$$
	
	For a free continuous action $\alpha \colon \G \acts X$ on a Polish space $X$, the smallest $k$ such that $\PrCol(k,S) \in \S_{\BM}(\alpha, \N)$ is called the \emph{Baire measurable chromatic number} of $\mathcal{G}(\alpha, S)$ and is denoted by $\chi_{\BM}(\mathcal{G}(\alpha, S))$, or simply $\chi_{\BM}(\alpha, S)$. Clearly, $$\chi_{\BM}(\alpha, S) \geq \chi(\G, S)\;\;\;\text{for}\;\;\; X \neq \0.$$
	A somewhat surprising result of Conley and Miller~\cite[Theorem~B]{CM16} implies that $\chi_{\BM}(\alpha, S)$ is also \emph{upper} bounded by a function of $\chi(\G, S)$; more precisely,
	\begin{equation}\label{eq:CM}
		\chi_{\BM}(\alpha, S) \leq 2 \chi(\G, S) - 1.
	\end{equation}
	Another important result concerning Baire measurable chromatic numbers is a (Baire) measurable version of Brooks's theorem due to Conley, Marks, and Tucker-Drob~\cite[Theorem~1.2(2)]{CMT-D16}, which implies that, similarly to the situation with ordinary chromatic numbers, $$\chi_{\BM}(\alpha, S) \leq |S| \;\;\;\text{for}\;\;\; |S| \geq 2.$$
	
	\subsection{A completeness result}\label{subsec:complete}
	
	The aim of this article is to make progress towards the understanding of the structure of the sets $\S_\BM(\alpha, \N)$. The first natural question to ask is, how complex, in descriptive set-theoretic terms, is $\S_\BM(\alpha, \N)$, as a subset of $\S_0(\G, \N)$?
	
	First, we have to make $\S_0(\G, \N)$ a Polish or, at least, a standard Borel space. It is straightforward to check that $\S_0(\G, \N)$ is a Borel subset of the \emph{Effros standard Borel space} $\mathcal{F}(\N^\G)$ and as such is itself standard Borel (for more details on the Effros space see \cite[\secsign{}12.C]{Kec95} and \cite[\secsign{}13.D]{Tse16}). Furthermore, in Section~\ref{sec:ideals} we put a natural Polish topology on $\S_0(\G, \N)$ (which results in the same Borel $\sigma$\=/algebra).
	
	Let $\alpha \colon \G \acts X$ be a free continuous action of $\G$ on a nonempty Polish space $X$. We say that $\alpha$ is \emph{generically smooth} if there is a Baire measurable map $f \colon X \to \R$ such that for all $x$, $y \in X$,
	\[
		 f(x) = f(y) \,\Longleftrightarrow\, y = \gamma \cdot x \text{ for some } \gamma \in \G.
	\]
	For smooth actions, descriptive and finite combinatorics essentially coincide; in particular, it is easy to show that if $\alpha$ is generically smooth, then $\S_{\BM}(\alpha, \N) = \S(\G, \N)$ (see Lemma~\ref{lemma:smooth}). In other words, from the point of view of descriptive combinatorics, smooth actions are trivial and it is only interesting to consider non-smooth ones.
	
	We show that in the non-smooth case, $\S_\BM(\alpha, \N)$ is a complete analytic subset of $\S_0(\G, \N)$; in particular, it is not Borel. Informally, this means that there is no hope for an ``explicit'' description of the subshifts $\subshift$ for which a given non-smooth action $\alpha$ admits a Baire measurable $\subshift$-coloring.
	
	\begin{theo}\label{theo:complete}
		Let $\alpha$ be a free continuous action of $\G$ on a nonempty Polish space. Then
		\begin{itemize}
			\item[--] either $\alpha$ is generically smooth, in which case $\S_{\BM}(\alpha, \N) = \S(\G, \N)$;
			\item[--] or else, the set $\S_{\BM}(\alpha, \N)$ is complete analytic.
		\end{itemize}
	\end{theo}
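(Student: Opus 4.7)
The smooth case will be handled by Lemma~\ref{lemma:smooth}, so assume henceforth that $\alpha$ is not generically smooth. For \textbf{analyticity}: a subshift $\subshift$ belongs to $\S_{\BM}(\alpha, \N)$ iff there exists a Borel function $f \colon X \to \N$ with $\set{x : \pi_f(x) \in \subshift}$ comeager---any BM coloring coincides with a Borel one on a comeager invariant set. Existentially quantifying over Borel codes for $f$, and observing that ``is comeager'' is a Borel predicate on Borel codes (Kuratowski--Ulam and the Vaught transform), exhibits $\S_{\BM}(\alpha, \N)$ as $\Sigma^1_1$.

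For \textbf{$\Sigma^1_1$-hardness}, the plan is to Borel-reduce the canonical complete analytic set $\operatorname{IF} \defeq \set{T \in \operatorname{Tr}(\N) : [T] \neq \0}$ to $\S_{\BM}(\alpha, \N)$. I would construct, Borel in $T$, a subshift $\subshift_T$ satisfying $\subshift_T \in \S_{\BM}(\alpha, \N) \iff [T] \neq \0$. Concretely, fix a nested exhaustion of $\G$ by finite sets $F_0 \subseteq F_1 \subseteq \cdots$ and a Borel assignment sending each $s \in T \cap \N^n$ to a code pattern in $\N^{F_n}$, with codes compatible under tree extensions. Declare $\omega \in \subshift_T$ iff for every $\gamma \in \G$ and every $n$, the restriction $\rest{\omega}{F_n\gamma}$ is the code of some $s_n(\gamma) \in T \cap \N^n$, consistently in the sense that $s_n(\gamma) \sqsubset s_{n+1}(\gamma)$. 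Closedness and shift-invariance follow from the local and equivariant nature of the constraints; any $\omega \in \subshift_T$ then determines an infinite branch $\bigcup_n s_n(\mathbf{1})$ of $T$, so $\subshift_T = \0$ whenever $T$ is well-founded, which handles the easy direction of the reduction.

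The main obstacle is the converse: assuming $[T] \neq \0$, exhibit a BM $\subshift_T$-coloring of $\alpha$. Mere nonemptiness of a subshift does not guarantee a BM coloring for non-smooth actions (cf.~\eqref{eq:CM}), so the encoding must be tuned to the BM structure of $\alpha$. My plan is to engineer $\subshift_T$ so that its local constraints can be realized using a Baire measurable marker/toast partition of $\alpha$: split a comeager portion of $X$ into uniformly finite cells conforming to the windows $F_n$ and, given a branch $\beta \in [T]$, fill in the coloring on each cell by copying the code of a long enough prefix of $\beta$. Such marker partitions are available for any free continuous action on a Polish space, and a standard Baire category argument over countably many finite-support extension tasks then assembles the desired $f$. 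The technical heart of the proof will be choosing the windows $F_n$ and the coding scheme so as to balance rigidity---enough that any $\omega \in \subshift_T$ really encodes a branch of $T$---against permissiveness---enough that the marker-based construction succeeds for an arbitrary $\beta \in [T]$.
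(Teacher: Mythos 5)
Your plan for analyticity (Borel codes plus Kuratowski--Ulam) is a reasonable alternative to the paper's Corollary~\ref{corl:analytic}, which instead develops a standard Borel space $\forceset{X}{\N^\G}$ of Baire measurable functions modulo $\ast$-equality (Theorem~\ref{theo:st_Borel}) and quantifies over that. Both routes should work.

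For hardness, however, your route departs from the paper's and contains a genuine gap that I do not think is merely technical. You propose a direct Borel reduction of the ill-founded trees $\operatorname{IF}$ to $\S_\BM(\alpha,\N)$ by coding each $T$ into a subshift $\subshift_T$ whose colorings encode branches of $T$, arranging $\subshift_T = \0$ when $T$ is well-founded, and building a BM $\subshift_T$-coloring from a branch $\beta \in [T]$ via a marker/toast partition. The step you defer---``balancing rigidity against permissiveness''---is exactly where the argument is in danger. If the windows $F_n\gamma$ impose consistent branch codes for all $\gamma$ and all $n$ (which is what you need to conclude $\subshift_T = \0$ when $[T] = \0$), then these overlapping windows force the branch decoded at $\gamma$ to agree with the one decoded at any nearby $\gamma'$, and inductively across the whole orbit; so any $\omega \in \subshift_T$ carries a single, shift-invariant ``ghost'' branch. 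The fiber $\set{\omega \in \subshift_T : \omega \text{ codes } \beta}$ is then a very rigid subshift, and there is no reason a BM equivariant map from an arbitrary non-smooth $\alpha$ into it should exist---nonemptiness of a subshift is far from sufficient (this is exactly the phenomenon behind the paper's ``hard'' subshifts in Lemma~\ref{lemma:hard}). A multi-scale toast would have to fill in the branch prefix at every scale in a manner compatible across cell boundaries at all scales simultaneously, which is not something the standard Baire-category marker lemmas give you for free; you would in effect need to re-prove, for your bespoke $\subshift_T$, a colorability result playing the role of Lemma~\ref{lemma:MU}. The paper avoids this tension entirely: rather than encoding trees, it constructs in Lemma~\ref{lemma:cont} a one-parameter family of very simple subshifts $\subshift_h$ (distance constraints between like-colored elements) indexed by $h$ in a compact perfect space $H$ with countable dense $H_0$, shows they are ``hard'' for $h \in H_0$ (Lemma~\ref{lemma:hard}, via a color appearing exactly once giving a transversal) and ``easy'' for $h \in H \setminus H_0$ (Marks--Unger, Lemma~\ref{lemma:MU}), then reduces the set $\set{C \in \K(H) : C \cap (H \setminus H_0) \neq \0}$---complete analytic by Hurewicz's theorem~\ref{theo:Hurewicz}---via $C \mapsto \bigcup_{h\in C}\subshift_h$, using that unions of easy subshifts are trivially easy and countable unions of hard subshifts remain hard (Lemma~\ref{lemma:hard_union}). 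If you want to pursue your direct $\operatorname{IF}$-reduction, you must actually supply the coding and the colorability proof; as written, the core of the argument is missing.
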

	
	We prove Theorem~\ref{theo:complete} in Section~\ref{sec:complete}. En route to proving Theorem~\ref{theo:complete}, we show that the set of all Baire measurable maps between two Polish spaces, taken modulo the equivalence relation of equality on a comeager set, can be naturally turned into a standard Borel space (see \secsign\ref{subsec:BM_space}); this construction appears to be new and of independent interest.
	
	\subsection{A combinatorial characterization of $\S_{\BM}(\sigma, \N)$}
	
	The following result was first established by Keane for the $2$- and the $3$-shift and subsequently generalized by Weiss~\cite{Wei00}:
	
	\begin{theo}[{Keane--Weiss~\cite[Theorem~2]{Wei00}}]\label{theo:gen_iso}
		Let $X$, $Y$ be Polish spaces of cardinality at least $2$. Then the shift actions $\sigma_X$, $\sigma_Y$ are generically isomorphic; i.e., there exist comeager shift-invariant subsets $X' \subseteq X^\G$, $Y' \subseteq Y^\G$ with an equivariant homeomorphism $\pi \colon X' \to Y'$ between them.
	\end{theo}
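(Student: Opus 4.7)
My plan is to first restrict both shifts to their free parts, then use transitivity of generic isomorphism to reduce to a universal target, and finally construct the required equivariant homeomorphism by a back-and-forth argument.

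I would begin by showing that the \emph{free part}
\[
\Free(X^\G) \defeq \set{\omega \in X^\G \,:\, \gamma\cdot\omega\neq\omega \text{ for all } \gamma\in\G\setminus\set{\mathbf{1}}}
\]
is a comeager shift-invariant $G_\delta$ subset of $X^\G$ whenever $|X|\geq 2$, and analogously for $Y$. For each $\gamma\neq\mathbf{1}$, the fixed-point set $\set{\omega : \gamma\cdot\omega=\omega}$ is closed, and any basic open cylinder admits a perturbation at some coordinate $\delta$ violating the constraint $\omega(\delta)=\omega(\delta\gamma)$ (using $|X|\geq 2$ and $|\G|=\infty$), so the fixed-point set is nowhere dense. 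After this reduction, I may assume that both shifts act freely on comeager invariant subsets, so I work inside $\Free(X^\G)$ and $\Free(Y^\G)$.

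Since generic isomorphism is evidently a transitive equivalence relation on continuous $\G$-actions, it suffices to show that $\sigma_X$ is generically isomorphic to the Cantor shift $\sigma_2$ for every Polish $X$ with $|X|\geq 2$. To this end, I would construct a continuous shift-equivariant map $\pi\colon\Free(X^\G)\to 2^\G$ that is injective on a further comeager invariant subset. Concretely, fix a countable family $\set{V_n \,:\, n\in\N}$ of basic open sets of $X$ separating points, and distribute the bit sequence $(\mathbf{1}[\omega(\gamma)\in V_n])_{n\in\N}$ across coordinates of $\pi(\omega)$ located near $\gamma$ in a $\G$-equivariant manner---for instance, by choosing an injective map $\beta\colon\N\to\G$ with the property that the translates $\gamma\cdot\beta(\N)$ ``fit together'' into a Rokhlin-type tiling of (a comeager portion of) the free shift, and then setting $\pi(\omega)(\gamma\cdot\beta(n))\defeq\mathbf{1}[\omega(\gamma)\in V_n]$. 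Injectivity on a comeager invariant subset then follows from the separating property of $\set{V_n}$ combined with freeness.

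The final step is a back-and-forth argument which refines $\pi$ into an equivariant homeomorphism between comeager invariant $G_\delta$ subsets $X'\subseteq X^\G$ and $Y'\subseteq 2^\G$: one enumerates countably many dense open requirements that together enforce surjectivity of $\pi$ onto $Y'$ and continuity of its inverse, and meets each requirement by a local modification within a basic open cylinder, using Baire category to collect all conditions on a single comeager set. The principal technical obstacle will be arranging the equivariant coding in the preceding step so that the encoding regions for distinct source coordinates do not overlap while maintaining both continuity and $\G$-equivariance; this is where a marker/tiling lemma for free shift actions of an arbitrary countable group enters the picture. Once this equivariant encoding is in hand, the back-and-forth itself is a standard Baire category construction, facilitated by the topological transitivity of the shift on its free part, which ensures that any finite requirement can be met within any nonempty open cylinder.
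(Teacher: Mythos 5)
The paper does not prove this theorem; it cites Weiss's result directly and adds only the one-line observation that the zero-dimensional hypothesis in Weiss's statement can be dropped because every Polish space contains a comeager zero-dimensional subspace. So there is no proof in the paper to compare against, and your proposal is attempting something the paper deliberately outsources.

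As a from-scratch attempt, the coarse shape is reasonable (restrict to the free part, use transitivity to reduce to a universal target, build an equivariant coding, close with a Baire-category back-and-forth), but the coding step fails as written, and the flaw is not merely a "technical obstacle." For the scheme $\pi(\omega)(\gamma\cdot\beta(n))\defeq\mathbf{1}[\omega(\gamma)\in V_n]$ to be well defined you need the translates $\gamma\cdot\beta(\N)=\beta(\N)\gamma^{-1}$ to be pairwise disjoint as $\gamma$ ranges over all of $\G$; but $\beta(\N)\gamma_1^{-1}\cap\beta(\N)\gamma_2^{-1}=\0$ requires $\gamma_1^{-1}\gamma_2\neq b_1^{-1}b_2$ for all $b_1,b_2\in\beta(\N)$, and demanding this for every $\gamma_1\neq\gamma_2$ forces $\beta(\N)$ to be a singleton. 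So for any infinite $\beta(\N)$ the encoding regions necessarily overlap, and no choice of $\beta$ fixes this. What is actually needed is a point-dependent marker/Rokhlin structure for generic free actions of an arbitrary countable group; obtaining it and organizing the coding around it is the bulk of the Keane--Weiss argument, not a step one can defer. A second gap: an injective equivariant $\pi\colon\Free(X^\G)\to 2^\G$ will in general not have comeager image, so one cannot "refine" a pre-built $\pi$ into the desired homeomorphism after the fact. The back-and-forth must construct the map and its inverse jointly, interleaving forth-steps with back-steps that force the image to meet prescribed dense open sets, all while preserving equivariance, and this interleaving is precisely where the proof is delicate. Your outline names both difficulties but treats them as peripheral, when in fact they constitute essentially all of the content of the theorem.
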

	
	\begin{remk}
		Theorem~2 in~\cite{Wei00} is stated for zero-dimensional spaces only. The result for general Polish spaces follows since every Polish space contains a comeager zero-dimensional subspace.
	\end{remk}
	
	Theorem~\ref{theo:gen_iso} allows us to refer, when meager sets may be ignored, to \emph{the} shift action $\sigma$, meaning \emph{any} shift action $\sigma_X$ for a Polish space $X$ of cardinality at least $2$. Note that this is in striking contrast to the situation in measurable dynamics.
	
	We associate with each subshift a certain countable object, which we call a \emph{$\G$-ideal}.
	
	\begin{defn}
		A subset $\P \subseteq \finfun{\G}{\N}$ is called a \emph{$\G$-ideal} if it is invariant under the action of $\G$ on $\finfun{\G}{\N}$ and closed under restrictions (i.e., if $\phi \subseteq \phi' \in \P$, then $\phi \in \P$).
	\end{defn}
	
	\begin{defn}
		For a subshift $\subshift \in \S_0(\G, \N)$, a map $\phi \in \finfun{\G}{\N}$ is called a \emph{finite $\subshift$\=/coloring} if there exists a coloring $\omega \in \subshift$ extending $\phi$. The set of all finite $\subshift$-colorings is denoted by $\FPC{\subshift}$.
	\end{defn}
	
	Clearly, for any $\subshift\in \S_0(\G, \N)$, the set $\FPC{\subshift}$ is a $\G$-ideal. However, not every $\G$-ideal arises in this way. We call the $\G$-ideals of the form $\FPC{\subshift}$ \emph{extendable} and characterize them combinatorially in Section~\ref{sec:ideals}. There we also assemble a ``dictionary'' of some correspondences between subshifts and extendable $\G$-ideals. They are useful, for example, in defining the topology on $\S_0(\G, \N)$.
	
	The second main result of this article is a purely combinatorial description of the set $\S_\BM(\sigma, \N)$. 
	Roughly speaking, we show that determining whether there exists a Baire measurable $\subshift$-coloring of $\sigma$ is equivalent to settling a question of the following form:
	\begin{equation}\label{eq:ques} \parbox{0.75\textwidth}{``Is it possible to decide whether a given partial coloring $\phi \in \finfun{\G}{\N}$ belongs to $\FPC{\subshift}$ only using `local' information?''}\tag{$\ast$}\end{equation}
	\noindent This question is rather natural, and some of its versions have already been studied in finite combinatorics with no connection to descriptive set theory. One particular interpretation of~\eqref{eq:ques}, which is of special interest in graph theory, is the problem of jointly extending given pre-colorings of substructures that are sufficiently far apart from each other. There is an extensive literature on this subject; see \cite{Alb98, AKW05, DLMP17, PT16} for a small sample. We formalize this idea in Definition~\ref{defn:PEP} as the \emph{join property} of subshifts. Definition~\ref{defn:local} isolates the class of \emph{local} subshifts; locality is stronger than the join property (see Remark after Definition~\ref{defn:local}).
	
	
	Let $\P \subseteq \finfun{\G}{\N}$ be a $\G$-ideal. A function $R \colon \P \to \Rpos$ is \emph{invariant} if $R(\gamma \cdot \phi) = R(\phi)$ for all $\phi \in \P$ and $\gamma \in \G$. We say that $\phi$,~$\psi \in \P$ are \emph{$R$-separated} if
	\begin{equation*}\label{eq:PEP}
		\dist(\dom(\phi), \dom(\psi)) \,>\, R(\phi) + R(\psi).
	\end{equation*}
	
	\begin{defn}\label{defn:PEP}
		Let $\P \subseteq \finfun{\G}{\N}$ be a $\G$-ideal. We say that $\P$ has the \emph{join property} if there is an invariant function $R \colon \P \to \Rpos$ such that for all $k \in \N$, if $\phi_1$, \ldots, $\phi_k \in \P$ are pairwise $R$\=/separated, then
		$\phi_1 \cup \ldots \cup \phi_k \in \P$. A subshift $\subshift \in \S_0(\G, \N)$ has the join property if so does the $\G$-ideal~$\FPC{\subshift}$.
	\end{defn}
	
	\begin{remk}
		For $k = 0$, we interpret the above definition to mean that $\0 \in \P$; in other words, a $\G$-ideal with the join property is necessarily nonempty.
	\end{remk}
	
	
	Given $\phi \in \finfun{\G}{\N}$, an element $\gamma \in \G$, and a radius $r \in \Rpos$, define
	\[
		\phi[\gamma, r] \defeq \rest{\phi}{(\dom(\phi) \cap \Ball{\gamma}{r})}.
	\]
	Let $\P \subseteq \finfun{\G}{\N}$ be a $\G$-ideal. Given a function $r \colon \N \to \Rpos$, we say that $\phi \in \finfun{\G}{\N}$ is \emph{$r$-locally in~$\P$} if for each $\gamma \in \dom(\phi)$,
	$$
	\phi[\gamma, r(\phi(\gamma))] \in \P.
	$$
	The set of all $\phi \in \finfun{\G}{\N}$ that are $r$-locally in~$\P$ is denoted by~$\loc{\P}{r}$.
	Note that since $\P$ is closed under restrictions, we have $\loc{\P}{r} \supseteq \P$ for all $r \colon \N \to \Rpos$.
	
	\begin{defn}\label{defn:local}
		Let $\P \subseteq \finfun{\G}{\N}$ be a $\G$-ideal. We say that $\P$ is \emph{local} if $\P = \loc{\P}{r}$ for some function $r \colon \N \to \Rpos$. A subshift $\subshift \in \S_0(\G, \N)$ is local if so is the $\G$-ideal $\FPC{\subshift}$.
	\end{defn}
	
	\begin{remk}
		Notice that every local $\G$-ideal has the join property. Indeed, suppose that $\P \subseteq \finfun{\G}{\N}$ is a local $\G$-ideal and let $r \colon \N \to \Rpos$ be a function such that $\P = \loc{\P}{r}$. Define an invariant map $R \colon \P \to \Rpos$ by
		\[
			R(\phi) \defeq \sup\set{r(\phi(\gamma)) \,:\, \gamma \in \dom(\phi)}.
		\]
		Let $\phi_1$, \ldots, $\phi_k \in \P$ be pairwise $R$-separated and set $\phi \defeq \phi_1 \cup \ldots \cup \phi_k$. Consider an arbitrary element $\gamma \in \dom(\phi)$. Then $\gamma \in \dom(\phi_i)$ for a unique $1 \leq i \leq k$. Since $\phi_1$, \ldots, $\phi_k$ are pairwise $R$-separated, for each $j \neq i$, we have
		\[
			\Ball{\gamma}{R(\phi_i)} \,\cap\, \dom(\phi_j) \,=\, \0.
		\]
		Since
		$r(\phi(\gamma)) = r(\phi_i(\gamma)) \leq R(\phi_i)$,
		we conclude that
		\[
			\phi[\gamma, r(\phi(\gamma))] \,\subseteq \, \phi[\gamma, R(\phi_i)] \,=\, \phi_i[\gamma, R(\phi_i)] \,\subseteq\, \phi_i \,\in\, \P.
		\]
		Therefore, $\phi \in \loc{\P}{r}$. As $\loc{\P}{r} = \P$, we obtain $\phi \in \P$, as desired.
	\end{remk}
	
	We need one last definition:
	
	\begin{defn}\label{defn:red}
		If $\subshift$, $\subshift' \in \S_0(\G, \N)$ are subshifts, then $\subshift$ is \emph{reducible} to $\subshift'$, in symbols $\subshift \succeq \subshift'$, if there is a map $\rho \colon \N \to \N$, called a \emph{reduction}, such that for all $\omega \in \subshift'$, we have $\rho \circ \omega \in \subshift$.
	\end{defn}
	
	\begin{remk}
		A special case of reducibility is when $\subshift \supseteq \subshift'$. Indeed, if $\subshift \supseteq \subshift'$, then the identity map $\operatorname{id}_\N \colon \N \to \N$ is a reduction from $\subshift$ to $\subshift'$. This explains the orientation of the symbol ``$\succeq$.''
	\end{remk}
	
	\begin{remk}
		If $\subshift \succeq \subshift'$ and $\subshift' \in \S_\BM(\alpha, \N)$ for some continuous action $\alpha \colon \G \acts X$ on a Polish space $X$, then $\subshift \in \S_\BM(\alpha, \N)$ as well. Indeed, if $\rho \colon \N\to \N$ is a reduction from $\subshift$ to $\subshift'$ and $f \colon X \to \N$ is a Baire measurable $\subshift'$-coloring of $\alpha$, then $\rho \circ f$ is a Baire measurable $\subshift$-coloring of $\alpha$.
	\end{remk}
	
	
	Finally, we are ready to state our result:
	
	\begin{theo}\label{theo:combi}
		The following statements are equivalent for a subshift $\subshift \in \S_0(\G, \N)$:
		\begin{enumerate}[label=\normalfont{(\roman*)}]
			\item\label{item:combi:BM} $\subshift \in \S_\BM(\sigma, \N)$;
			\item\label{item:combi:amalgamation} $\subshift \supseteq \subshift'$ for some subshift $\subshift'$ with the join property;
			\item\label{item:combi:local} $\subshift \succeq \subshift'$ for some local subshift $\subshift'$.
		\end{enumerate}
	\end{theo}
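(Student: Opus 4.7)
The plan is to close the cycle $(\mathrm{ii}) \Rightarrow (\mathrm{iii}) \Rightarrow (\mathrm{i}) \Rightarrow (\mathrm{ii})$, using two remarks already at hand: membership in $\S_\BM(\sigma,\N)$ is preserved downward along reductions (remark after Definition~\ref{defn:red}), and every local $\G$-ideal has the join property (remark after Definition~\ref{defn:local}).

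For $(\mathrm{i}) \Rightarrow (\mathrm{ii})$ I would work on $X = \N^\G$ with the basis of cylinders $[\tau]$, $\tau \in \finfun{\G}{\N}$, fix a Baire measurable $\subshift$-coloring $f \colon X \to \N$, and let $C \subseteq X$ be a comeager invariant set on which $\pi_f$ is continuous with $\pi_f(C) \subseteq \subshift$. For $\psi \in \finfun{\G}{\N}$ put
\[
B_\psi \defeq \{x \in X : \psi \subseteq \pi_f(x)\} = \bigcap_{\gamma \in \dom(\psi)} \gamma^{-1} \cdot f^{-1}(\psi(\gamma));
\]
each $B_\psi$ is Baire measurable, and $\P \defeq \{\psi : B_\psi \text{ is nonmeager}\}$ is a $\G$-ideal. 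Let $\subshift' \defeq \{\omega \in \N^\G : \text{every finite } \psi \subseteq \omega \text{ lies in } \P\}$, which is a closed shift-invariant set. The inclusion $\subshift' \subseteq \subshift$ follows by meeting each $B_\psi$ with $C$ and invoking closedness of $\subshift$, while $\P = \FPC{\subshift'}$ holds because $\bigcup_{\psi' \notin \P} B_{\psi'}$ is a countable union of meager sets, so any $x \in B_\psi$ outside it has $\pi_f(x) \in \subshift'$. For the join property, define
\[
R(\psi) \defeq \min\bigl\{r \in \N : \exists\, \tau \in \finfun{\G}{\N},\ \dom(\tau) = \Ball{\dom(\psi)}{r},\ [\tau] \Vdash B_\psi \bigr\};
\]
the minimum exists by the Baire alternative applied to $B_\psi$, invariance of $R$ follows from right-invariance of $\dist$ and shift-equivariance of forcing, and when $\psi_1, \ldots, \psi_k \in \P$ are pairwise $R$-separated their witnessing $\tau_i$ have pairwise disjoint domains, so $[\tau_1 \cup \cdots \cup \tau_k]$ is a nonempty open set in which every $B_{\psi_i}$, and hence $B_{\psi_1 \cup \cdots \cup \psi_k}$, is comeager.

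For $(\mathrm{ii}) \Rightarrow (\mathrm{iii})$, I would encode the invariant radius of $\subshift'$ into the alphabet: identify $\N$ with $\N \times \N$ and let $\subshift''$ consist of $\omega'' = (\omega, s)$ such that for every $\gamma \in \G$ the restriction $\omega|_{\Ball{\gamma}{s(\gamma)}}$ lies in $\FPC{\subshift'}$ and $s(\gamma) \geq R(\omega|_{\Ball{\gamma}{s(\gamma)}})$. Closedness and shift-invariance follow from invariance of $R$; locality with witness $r(n,s) = s$ is built in; and the projection $\rho = \operatorname{proj}_1$ reduces $\subshift'$ to $\subshift''$, the join property being used once to produce a total labeling $s$ for each $\omega \in \subshift'$ and again, via a compactness argument, to show that any $\omega$ admitting such a labeling lies in $\subshift'$. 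This is the content foreshadowed by Lemma~\ref{lemma:red_to_loc}. For $(\mathrm{iii}) \Rightarrow (\mathrm{i})$, it suffices by the reduction-preservation remark to produce a Baire measurable $\subshift'$-coloring of $\sigma$ for every local $\subshift'$; fixing a locality witness $r$, I would carry out a Baire measurable inductive construction on $\N^\G$ by combining an equivariant scaffold (a nested exhaustion of each orbit on a comeager invariant set, produced from standard marker/hyperfiniteness-style arguments for the shift action) with greedy color selection so that each newly visible ball $\phi[\gamma, r(c)]$ lies in $\FPC{\subshift'}$; locality is exactly what lets such a choice always be made and guarantees $\pi_f(x) \in \subshift'$ on a comeager invariant set.

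I expect $(\mathrm{ii}) \Rightarrow (\mathrm{iii})$ to be the main obstacle: the alphabet enlargement has to be calibrated so that $\subshift''$ is simultaneously local and reduces back to $\subshift'$, and the join property must be invoked both to lift each $\omega \in \subshift'$ to a valid radius labeling and to propagate the purely local checks that define $\subshift''$ into global membership of its projection in $\subshift'$. The remaining directions are more routine: $(\mathrm{iii}) \Rightarrow (\mathrm{i})$ is a standard Baire measurable greedy construction whose shape is familiar from other descriptive-combinatorial arguments, and $(\mathrm{i}) \Rightarrow (\mathrm{ii})$ is the clean Baire category forcing argument outlined above.
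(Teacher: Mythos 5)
Your cycle $(\mathrm{i}) \Rightarrow (\mathrm{ii}) \Rightarrow (\mathrm{iii}) \Rightarrow (\mathrm{i})$ is exactly the paper's (Lemmas~\ref{lemma:col_to_join}, \ref{lemma:red_to_loc}, \ref{lemma:loc_to_col}), and your $(\mathrm{i}) \Rightarrow (\mathrm{ii})$ is essentially the paper's argument: the ideal of nonmeagerly-hit cylinders, extendability via the countable decomposition of $\pi^{-1}(U_\phi)$, and the invariant radius $R(\phi)$ defined as the least radius of a forcing condition. That part is sound.

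The gap is in $(\mathrm{ii}) \Rightarrow (\mathrm{iii})$. The subshift $\subshift''$ you propose --- pairs $(\omega, s)$ with $\omega|_{\Ball{\gamma}{s(\gamma)}} \in \FPC{\subshift'}$ and $s(\gamma) \geq R(\omega|_{\Ball{\gamma}{s(\gamma)}})$ --- does not deliver what you need, for three compounding reasons. First, there is no reason the inequality $s(\gamma) \geq R(\omega|_{\Ball{\gamma}{s(\gamma)}})$ is solvable: as $s(\gamma)$ grows, the domain of $\omega|_{\Ball{\gamma}{s(\gamma)}}$ grows and $R$ can grow faster, so $\subshift''$ may simply be empty. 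Second, even granting $(\omega, s) \in \subshift''$, you cannot conclude $\omega \in \subshift'$: for two group elements $\gamma_1, \gamma_2$ whose balls $\Ball{\gamma_i}{s(\gamma_i)}$ overlap, the two restrictions are \emph{not} $R$-separated, so the join property says nothing about their union --- and overlapping balls are unavoidable since the balls must cover $\G$. Third, the ``locality is built in'' claim is about the defining condition on \emph{total} colorings; what must be shown local is the $\G$-ideal $\FPC{\subshift''}$ of finite partial colorings extendable to $\subshift''$, and that is not automatic from the shape of the condition. The paper's Lemma~\ref{lemma:red_to_loc} gets around all three issues by tagging each color with a threshold $h$, inspecting only the sub-$h$ entries within radius $3h$, and --- crucially --- requiring those entries to cluster inside $\Ball{\gamma}{h}$; Claim~\ref{claim:partition} then peels off the top-threshold piece to produce a genuinely $R$-separated decomposition, which is exactly what your overlapping-ball setup cannot produce. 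Your $(\mathrm{iii}) \Rightarrow (\mathrm{i})$ outline is in the right spirit but stops at ``marker/greedy''; the paper's implementation on $(2^\G)^\N$ uses the coordinate $x_i$ to isolate $2R_i$-sparse marker sets, a fixed sequence $(c_i)$ hitting every color infinitely often, locality to verify the partial colorings stay in $\pcol{\G}{\N}{\P}$, and extendability (not locality) in the density argument showing $\pi_\infty$ is totally defined generically --- so I would not call that direction ``routine'' without spelling out at least the marker construction and where extendability enters.
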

	
	We prove Theorem~\ref{theo:combi} in Section~\ref{sec:combi}.
	
	
	\subsection{Some corollaries}\label{subsec:corls}
	
	As mentioned previously, the join property and its analogs have been an object of study in graph theory (although Definition~\ref{defn:PEP} does not appear to have been explicitly articulated before). In particular, implication \ref{item:combi:amalgamation} $\Longrightarrow$ \ref{item:combi:BM} of Theorem~\ref{theo:combi} can be used to derive bounds on Baire measurable chromatic numbers from known results in finite combinatorics. For instance, deep results of Postle and Thomas~\cite{PT16} yield the following:
	
	\begin{corl}\label{corl:planar}
		Suppose that $\G$ is generated by a finite symmetric set $S \subset \G$ with $\mathbf{1} \not \in S$ such that the corresponding Cayley graph $\mathcal{G} \defeq \operatorname{Cay}(\G, S)$ is planar. Then
		\begin{equation}\label{eq:planar}
			\chi_\BM(\sigma, S) \leq \begin{cases}
			3 &\text{if $\mathcal{G}$ contains no cycles of lengths $3$ and $4$};\\
			4 &\text{if $\mathcal{G}$ contains a cycle of length $4$ but not of length $3$};\\
			5 &\text{otherwise}.
			\end{cases}
		\end{equation}
	\end{corl}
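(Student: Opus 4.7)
The plan is to invoke the implication \ref{item:combi:amalgamation}~$\Longrightarrow$~\ref{item:combi:BM} of Theorem~\ref{theo:combi} with $\subshift' \defeq \PrCol(k, S)$, where $k$ is $3$, $4$, or $5$ according to which case of \eqref{eq:planar} applies. Thus in each case it suffices to exhibit an invariant function $R \colon \FPC{\PrCol(k, S)} \to \Rpos$ witnessing the join property of $\PrCol(k, S)$.

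The crucial ingredient is the following consequence of the precoloring-extension results of Postle and Thomas~\cite{PT16}, valid for each of the three cases: there exists a constant $D = D(k) \geq 0$ such that whenever $F_1, \ldots, F_n \subseteq \G$ are finite sets pairwise at distance greater than $D + \operatorname{diam}(F_i) + \operatorname{diam}(F_j)$, and $\psi_i \colon F_i \to k$ are proper partial $k$-colorings of the Cayley graph, each individually extending to a proper $k$-coloring of $\mathcal{G}$, then the combined coloring $\psi_1 \cup \cdots \cup \psi_n$ also extends to a proper $k$-coloring of $\mathcal{G}$. To extract this, one takes the Postle-Thomas precoloring-extension theorems (which normally concern precolorings of isolated vertices), enlarges each $F_i$ to a ball of comparable radius while precoloring the enlarged set using the chosen global extension of $\psi_i$, and uses a de~Bruijn-Erd\H{o}s compactness argument to lift the conclusion from finite planar subgraphs to the infinite planar Cayley graph.

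With this in hand, I would set
\[
R(\phi) \defeq \operatorname{diam}(\dom \phi) + D.
\]
Right-invariance of $\dist$ gives $\operatorname{diam}(\gamma \cdot \dom \phi) = \operatorname{diam}(\dom \phi)$, so $R$ is $\G$-invariant. If $\phi_1, \ldots, \phi_n \in \FPC{\PrCol(k, S)}$ are pairwise $R$-separated, then their domains satisfy the distance hypothesis above with room to spare, so $\phi_1 \cup \cdots \cup \phi_n \in \FPC{\PrCol(k, S)}$, establishing the join property. Theorem~\ref{theo:combi} then produces a Baire measurable $\PrCol(k, S)$-coloring of $\sigma$, i.e., $\chi_\BM(\sigma, S) \leq k$.

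The main obstacle is extracting the correct uniform precoloring-extension statement from~\cite{PT16} for all three girth regimes and generalizing it from isolated-vertex precolorings to arbitrary finite partial colorings on the infinite Cayley graph $\mathcal{G}$; once that combinatorial extraction is done, the rest of the argument is a routine application of Theorem~\ref{theo:combi}.
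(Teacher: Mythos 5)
Your proof is correct and takes essentially the same route as the paper: both invoke implication~\ref{item:combi:amalgamation}~$\Rightarrow$~\ref{item:combi:BM} of Theorem~\ref{theo:combi} and derive the join property of $\PrCol(k, S)$ from Postle--Thomas~\cite[Theorem~8.10]{PT16}. The paper's proof is a one-line citation, so your explicit choice $R(\phi) = \operatorname{diam}(\dom\phi) + D$ and your discussion of the combinatorial extraction from \cite{PT16} simply flesh out what the paper leaves implicit.
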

	\begin{proof}
		Assume that $\G$ and $S$ satisfy the above assumptions and let $k$ denote the quantity on the right hand side of~\eqref{eq:planar}. The fact that $\PrCol(S, k)$ is a subshift with the join property is a consequence of~\cite[Theorem~8.10]{PT16}.
	\end{proof}
	
	Note that the best upper bounds for $\chi_\BM(\sigma, S)$ under the assumptions of Corollary~\ref{corl:planar} that follow from previously known results are $\chi_\BM(\sigma, S) \leq 7$ in general and $\chi_\BM(\sigma, S) \leq 5$ if $\operatorname{Cay}(\G, S)$ contains no cycles of length $3$; these follow from combining~\cite[Theorem~B]{CM16} (see~\eqref{eq:CM} above) with the Four Color Theorem~\cite[Theorem~5.1.1]{Die00} and Gr\"otzsch's theorem~\cite[Theorem~5.1.3]{Die00} respectively. The proof of \cite[Theorem~8.10]{PT16} due to Postle and Thomas is quite difficult.
	
	Locality of a subshift is often rather easy to check, which makes condition~\ref{item:combi:local} of Theorem~\ref{theo:combi} a convenient tool for constructing subshifts in $\S_\BM(\sigma, \N)$ with additional interesting properties. To illustrate this, in~\secsign\ref{subsec:not_universal} we prove the following:
	
	\begin{corl}\label{corl:not_universal}
		There exists a free continuous action $\alpha$ of $\G$ on a Polish space such that \[\S_\BM(\alpha, \N) \centernot \supseteq \S_\BM(\sigma, \N).\]
	\end{corl}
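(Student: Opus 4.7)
The plan is to exhibit a free continuous action $\alpha$ of $\G$ and a local subshift $\subshift$ such that $\alpha$ admits no Baire measurable $\subshift$-coloring. Since locality of $\subshift$ guarantees $\subshift \in \S_\BM(\sigma, \N)$ by Theorem~\ref{theo:combi}\ref{item:combi:local}, such a pair $(\alpha, \subshift)$ immediately witnesses $\S_\BM(\alpha, \N) \centernot \supseteq \S_\BM(\sigma, \N)$.

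For concreteness (and illustration of the strategy) take $\G = \Z$ and let $\alpha$ be the rotation of the circle $\mathbb{T} = \R/\Z$ by some carefully chosen irrational $\alpha_0$. The crucial feature of this action is the rigidity of its Baire measurable functions: by the Baire alternative applied to each level set, any BM $f \colon \mathbb{T} \to \N$ satisfies $f^{-1}(n) \stareq U_n$ for some open $U_n \subseteq \mathbb{T}$, and the family $(U_n)_{n \in \N}$ forms a pairwise disjoint open partition of $\mathbb{T}$ modulo a meager set. The coding map $\pi_f$ is thus determined, up to meager, by how orbits of $\alpha$ visit this open partition.

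For the subshift, I take a subshift of finite type $\subshift \subseteq \{0,1\}^\Z$ whose local constraints impose both a lower and an upper bound on the lengths of monochromatic blocks. A prototype is obtained by forbidding the patterns $010$ and $101$ (which forces every block of a valid sequence to have length at least $2$) together with $0^L$ and $1^L$ for some fixed $L$ (which bounds block lengths above by $L-1$). A periodic sequence built of alternating blocks of length in $\{2, \ldots, L-1\}$ witnesses $\subshift \neq \0$, and locality follows from the finite-type specification with $r \equiv L$; hence $\subshift \in \S_\BM(\sigma, \N)$.

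The main step is a Baire category argument establishing $\subshift \notin \S_\BM(\alpha, \N)$. Given a hypothetical BM $f$ with $\pi_f \in \subshift$ on a comeager set, the upper block-length bound forces every arc of each $U_c$ to have length at most $(L-1)\alpha_0$ (else a non-meager open set of $x$ would yield a monochromatic run of length $L$ in $\pi_f(x)$), while the lower bound forces each arc to have length at least $\alpha_0$. The hard part is making this geometric incompatibility genuinely robust: the naive bounds admit alternating-arc partitions for well-chosen $\alpha_0$, so the construction must be sharpened---either by refining $\subshift$ with additional multi-scale constraints sensitive to the number-theoretic properties of $\alpha_0$, or by upgrading to $\G = \Z^d$ with $d \geq 2$ and taking $\subshift$ to be a strongly aperiodic subshift of finite type (via a Berger/Robinson-style Wang tile set). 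In the latter variant, $\alpha$ is a minimal equicontinuous translation action on $\mathbb{T}^d$, and the key observation is that a BM equivariant map $\pi_f \colon \mathbb{T}^d \to \subshift$ would, after a rigidification step (using Kuratowski--Ulam to pass from Baire measurable to continuous equivariance on a comeager invariant set), yield an equicontinuous topological factor of $\alpha$ landing in $\subshift$, contradicting the strong aperiodicity of $\subshift$.
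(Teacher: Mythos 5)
Your high-level strategy is exactly the paper's: exhibit a local subshift $\subshift$ (hence $\subshift \in \S_\BM(\sigma,\N)$ by Theorem~\ref{theo:combi}) together with a free continuous action $\alpha$ that admits no Baire measurable $\subshift$-coloring. Moreover you correctly sense that some form of minimality/equicontinuity of $\alpha$ is what should obstruct the coloring. However, the proposal stops short of a proof and the concrete constructions it offers have genuine gaps.

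First, you yourself concede that the $\G=\Z$, irrational-rotation, bounded-block attempt does not close (``the naive bounds admit alternating-arc partitions for well-chosen $\alpha_0$''), so that version is not an argument. Second, the $\G=\Z^d$ variant with a strongly aperiodic Wang-tile SFT is only a sketch, and its key step is not sound as stated: you cannot pass from a Baire measurable $\subshift$-coloring of $\mathbb{T}^d$ to an ``equicontinuous topological factor $\alpha \to \subshift$.'' Kuratowski's theorem gives continuity of $\pi_f$ only on a comeager (and, after intersecting translates, invariant) $G_\delta$ subset $X'\subset \mathbb{T}^d$; but $X'$ is not compact, so $\rest{\pi_f}{X'}$ is not a topological factor of a compact equicontinuous system, and the standard dynamical obstruction (aperiodic SFTs are not continuous factors of equicontinuous systems) does not apply directly. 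Whether that route can be repaired is an interesting question, but it is not established here.

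The paper takes a different, and ultimately quite short, route. It works with an arbitrary free minimal action of $\G$ on a nonempty compact metrizable space (whose existence is nontrivial, via Gao--Jackson--Seward), and the decisive tool is a syndeticity lemma you have no analog of: for such $\alpha$, every nonmeager Baire measurable set $A$ has the property that $\{x : \dist(x,A)\le R\}$ is comeager for some finite $R$ (Lemma~\ref{lemma:req}; this uses minimality to cover by translates of a forcing open set and compactness to make the cover finite). The paper's $\G$-ideal is then built over the \emph{infinite} alphabet $\N$ with two nested constraints on each color $c$: vertices of color $c$ are $>2d_c$ apart, and the annulus $\{2d_c<\dist\le D_c\}$ around a $c$-colored vertex carries only colors $>c$. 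Locality and extendability are easy, and Lemma~\ref{lemma:infty} forces any $\P$-coloring to use unboundedly large colors; choosing a color $c$ with $d_c\ge R$ then produces a ball of radius $R$ avoiding the nonmeager color class $c_0$, contradicting the syndeticity lemma. The essential mechanism---colors escaping to infinity, used to defeat syndeticity---is absent from your finite-alphabet proposals, and that is the concrete gap you would need to fill.
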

	
	We find Corollary~\ref{corl:not_universal} somewhat surprising. Indeed, due to Theorem~\ref{theo:gen_iso}, all non-trivial shift actions of $\G$ admit exactly the same types of Baire measurable colorings. Analogous statements hold in the purely Borel context and in the context of approximate measure colorings; the former follows from a result of Seward and Tucker-Drob~\cite[Theorem~1.1]{ST-D16}, the latter---from the Ab\'ert--Weiss theorem on weak containment of Bernoulli shifts~\cite[Theorem~1]{AW13}. However, both in the Borel and in the approximate measure frameworks, the shift actions are actually the \emph{hardest} ones to color (which also follows from \cite[Theorem~1.1]{ST-D16} and \cite[Theorem~1]{AW13}), whereas, as Corollary~\ref{corl:not_universal} asserts, that is \emph{not} the case in the Baire category setting.
	
	\section{Extendable $\G$-ideals}\label{sec:ideals}
	
	\noindent Due to their combinatorial nature, we sometimes find working with $\G$-ideals more convenient than referring to subshifts directly. In this section we summarize some useful correspondences between the two kinds of objects. Most statements made here follow readily from definitions.
	
	Given a $\G$-ideal $\P \subseteq \finfun{\G}{\N}$, an \emph{$\P$-coloring} is a map $\omega \colon \G \to \N$ such that
	\[
		\rest{\omega}{S} \in \P\;\;\; \text{for all} \;\;\;S \in \finset{\G}.
	\]
	The set of all $\P$-colorings is denoted $\Sol(\P)$. It is clear that $\Sol(\P) \subseteq \N^\G$ is a subshift.
	
	\begin{defn}\label{defn:ext}
		A $\G$-ideal $\P \subseteq \finfun{\G}{\N}$ is \emph{extendable} if for every $\phi \in \P$ and $\gamma \in \G \setminus \dom(\phi)$, there is a color $c \in \N$ such that $\phi \cup \set{(\gamma, c)} \in \P$.
	\end{defn}
	
	\begin{prop}\label{prop:finite}
		Let $\P \subseteq \finfun{\G}{\N}$ be a $\G$-ideal. The following statements are equivalent:
		\begin{itemize}
			\item[--] $\P = \FPC{\subshift}$ for some $\subshift \in \S_0(\G, \N)$;
			\item[--] $\P$ is extendable.
		\end{itemize}
		If $\P$ is extendable, then the subshift $\subshift \in \S_0(\G, \N)$ such that $\P = \FPC{\subshift}$ is unique, namely $\subshift = \Sol(\P)$.
	\end{prop}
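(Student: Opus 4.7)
The plan is to prove the two implications separately, then the uniqueness statement will follow from showing that the candidate subshift can only be $\Sol(\P)$.

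For the forward direction, suppose $\P = \FPC{\subshift}$. Given $\phi \in \P$ and $\gamma \in \G \setminus \dom(\phi)$, by definition of $\FPC{\subshift}$ pick any $\omega \in \subshift$ extending $\phi$ and set $c \defeq \omega(\gamma)$; then $\phi \cup \set{(\gamma, c)} \subseteq \omega$ lies in $\FPC{\subshift} = \P$, so $\P$ is extendable. This is the easy direction.

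For the converse, I would take $\subshift \defeq \Sol(\P)$ and verify both that $\subshift$ is a subshift and that $\FPC{\subshift} = \P$. Closedness of $\Sol(\P)$ in the product topology is automatic because each condition ``$\rest{\omega}{S} \in \P$'' for finite $S$ is clopen (the restriction map to $\N^S$ is continuous and $\P \cap \N^S$ is a subset of a countable discrete space). Shift invariance follows from a short bookkeeping computation, namely $\rest{(\gamma \cdot \omega)}{S} = \gamma \cdot \rest{\omega}{S \gamma}$, combined with the $\G$-invariance of $\P$. The containment $\FPC{\Sol(\P)} \subseteq \P$ is immediate since any $\omega \in \Sol(\P)$ has every finite restriction in $\P$. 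The main step is the reverse inclusion $\P \subseteq \FPC{\Sol(\P)}$: given $\phi \in \P$, enumerate $\G \setminus \dom(\phi) = \set{\gamma_1, \gamma_2, \ldots}$ and use extendability recursively to produce an increasing chain $\phi = \phi_0 \subseteq \phi_1 \subseteq \ldots$ in $\P$ with $\gamma_n \in \dom(\phi_n)$. Set $\omega \defeq \bigcup_n \phi_n \colon \G \to \N$. Every finite $S \subseteq \G$ is contained in $\dom(\phi_n)$ for some $n$, so $\rest{\omega}{S} \subseteq \phi_n \in \P$ and, because $\P$ is closed under restrictions, $\rest{\omega}{S} \in \P$. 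Hence $\omega \in \Sol(\P)$ and extends $\phi$, witnessing $\phi \in \FPC{\Sol(\P)}$.

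For uniqueness, suppose $\subshift' \in \S_0(\G, \N)$ also satisfies $\FPC{\subshift'} = \P$. The inclusion $\subshift' \subseteq \Sol(\P)$ is immediate from the definitions. For the reverse inclusion, given $\omega \in \Sol(\P)$, every finite restriction $\rest{\omega}{S}$ lies in $\P = \FPC{\subshift'}$, so we can pick $\omega_S \in \subshift'$ extending $\rest{\omega}{S}$; directing by inclusion, the net $(\omega_S)$ converges to $\omega$ in the product topology, and closedness of $\subshift'$ gives $\omega \in \subshift'$. I expect no serious obstacle here; the only mildly delicate point is the recursive extension argument, which hinges precisely on extendability being applicable one coordinate at a time.
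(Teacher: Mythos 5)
Your proof is correct and complete. The paper explicitly declines to give a proof of Proposition~\ref{prop:finite}, remarking only that it is straightforward, so there is no argument in the paper to compare against; the route you take (checking closedness via clopenness of each restriction condition, shift-invariance via the identity $\rest{(\gamma\cdot\omega)}{S} = \gamma\cdot\rest{\omega}{S\gamma}$, the recursive one-coordinate-at-a-time extension to build a total coloring, and the net/sequence compactness argument for uniqueness) is exactly the natural argument the author had in mind.
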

	
	The proof of Proposition~\ref{prop:finite} is straightforward, and we do not spell it out here.
	
	Note that the set $\mathbf{Ext}(\G, \N)$ of all extendable $\G$-ideals is a $G_\delta$ subset of the power set of $\finfun{\G}{\N}$. 
	By Alexandrov's theorem~\cite[Theorem~3.11]{Kec95}, $\mathbf{Ext}(\G, \N)$ is Polish in its relative topology. The bijection between $\mathbf{Ext}(\G, \N)$ and $\S_0(\G, \N)$, given by the maps $\Sol \colon \mathbf{Ext}(\G, \N) \to \S_0(\G, \N)$ and $\mathbf{Fin} \colon \S_0(\G, \N) \to \mathbf{Ext}(\G, \N)$, allows us to transfer the Polish topology from $\mathbf{Ext}(\G, \N)$ to $\S_0(\G, \N)$, thus turning $\S_0(\G, \N)$ into a Polish space. Explicitly, the Polish topology on $\S_0(\G, \N)$ is generated by the open sets of the form
	\[
		\set{\subshift \in \S_0(\G, \N) \,:\, \phi \in \FPC{\subshift}} \;\;\;\text{and}\;\;\;\set{\subshift \in \S_0(\G, \N) \,:\, \phi \not\in \FPC{\subshift}},
	\]
	where $\phi$ is ranging over $\finfun{\G}{\N}$.
	
	The next definition is the analog of Definition~\ref{defn:red} for $\G$-ideals:
	
	\begin{defn}
		If $\P$, $\P' \subseteq \finfun{\G}{\N}$ are $\G$-ideals, then $\P$ is \emph{reducible} to $\P'$, in symbols $\P \succeq \P'$, if there is a map $\rho \colon \N \to \N$, called a \emph{reduction}, such that for all $\phi \in \P'$, we have $\rho \circ \phi \in \P$.
	\end{defn}
	
	The following statements are also straightforward:
	
	\begin{prop}
		Let $\subshift$, $\subshift' \in S_0(\G, \N)$. Then
		\begin{itemize}
			\item[--] $\subshift \supseteq \subshift'$ if and only if $\FPC{\subshift} \supseteq \FPC{\subshift'}$; and
			\item[--] $\subshift \succeq \subshift'$ if and only if $\FPC{\subshift} \succeq \FPC{\subshift'}$.
		\end{itemize}
	\end{prop}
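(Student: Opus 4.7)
Both equivalences follow almost immediately from Proposition~\ref{prop:finite}, which tells us that $\FPC{\subshift}$ is extendable and that $\subshift = \Sol(\FPC{\subshift})$; in other words, the passage $\subshift \mapsto \FPC{\subshift}$ and its inverse $\P \mapsto \Sol(\P)$ are mutually inverse bijections between $\S_0(\G, \N)$ and $\mathbf{Ext}(\G, \N)$. The plan is to handle each equivalence by a short forward/backward argument that uses this bijection as the main tool.

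For the first bullet, the forward direction is immediate from the definition of $\FPC{\cdot}$: if $\subshift \supseteq \subshift'$ and $\phi \in \FPC{\subshift'}$, pick any $\omega \in \subshift'$ extending $\phi$; then $\omega \in \subshift$, so $\phi \in \FPC{\subshift}$. For the backward direction, suppose $\FPC{\subshift} \supseteq \FPC{\subshift'}$ and take $\omega \in \subshift'$. For every finite $S \subseteq \G$ we have $\rest{\omega}{S} \in \FPC{\subshift'} \subseteq \FPC{\subshift}$, which by the definition of $\Sol$ means $\omega \in \Sol(\FPC{\subshift})$; by Proposition~\ref{prop:finite} the right-hand side equals $\subshift$, as desired.

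For the second bullet the argument has the same shape. If $\rho \colon \N \to \N$ is a reduction witnessing $\subshift \succeq \subshift'$ and $\phi \in \FPC{\subshift'}$ extends to $\omega \in \subshift'$, then $\rho \circ \omega \in \subshift$ extends $\rho \circ \phi$, so $\rho \circ \phi \in \FPC{\subshift}$; hence the same $\rho$ reduces $\FPC{\subshift}$ to $\FPC{\subshift'}$. Conversely, if $\rho$ reduces $\FPC{\subshift}$ to $\FPC{\subshift'}$ and $\omega \in \subshift'$, then for each finite $S \subseteq \G$, $\rest{(\rho \circ \omega)}{S} = \rho \circ \rest{\omega}{S} \in \FPC{\subshift}$, whence $\rho \circ \omega \in \Sol(\FPC{\subshift}) = \subshift$, so $\rho$ witnesses $\subshift \succeq \subshift'$.

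There is no real obstacle here; the only point that needs a moment's care is that in the backward directions one must explicitly invoke the identification $\subshift = \Sol(\FPC{\subshift})$ from Proposition~\ref{prop:finite}, rather than re-prove from scratch that a coloring all of whose finite restrictions are finite $\subshift$-colorings must itself lie in $\subshift$. Once this identification is in hand, both equivalences reduce to unwinding the definitions of $\FPC{\cdot}$, $\Sol(\cdot)$, and of a reduction.
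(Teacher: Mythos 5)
Your proof is correct, and since the paper presents this proposition as "straightforward" with no written proof, yours supplies exactly the kind of unwinding-the-definitions argument the author is implicitly invoking. The key move---using Proposition~\ref{prop:finite} to replace $\subshift$ by $\Sol(\FPC{\subshift})$ in the backward directions, together with the observation that restriction commutes with post-composition by $\rho$---is exactly what is needed and you have it.
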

	
	Finally, given a $\G$-ideal $\P$ and a continuous action $\alpha \colon \G \acts X$ on a Polish space $X$, a \emph{Baire measurable $\P$-coloring} of $\alpha$ is the same as a Baire measurable $\Sol(\P)$\=/coloring of $\alpha$.
	
	\section{Proof of Theorem~\ref{theo:complete}}\label{sec:complete}
	
	\subsection{The space of Baire measurable functions}\label{subsec:BM_space}
	
	For the rest of this subsection (save Corollary~\ref{corl:analytic}), we fix a Polish space~$X$ and a standard Borel space $Y$. Two Baire measurable functions $f$, $g \colon X \to Y$ are \emph{equal on a comeager set}, or \emph{$\ast$-equal}, in symbols $f \stareq g$, if the set $\set{x \in X \,:\, f(x) = g(x)}$ is comeager. The set of all Baire measurable functions from $X$ to $Y$, taken modulo the equivalence relation of $\ast$-equality, is denoted by $\forceset{X}{Y}$. For a nonempty open set $U \subseteq X$ and a Borel subset $A \subseteq Y$, let
	\[
		\forceset{U}{A} \defeq \set{f \in \forceset{X}{Y} \,:\, U \Vdash f^{-1}(A)}.
	\]
	Let $\Bairealg{X}{Y}$ denote the $\sigma$-algebra on $\forceset{X}{Y}$ generated by the sets of the form $\forceset{U}{A}$ for all nonempty open $U \subseteq X$ and Borel $A \subseteq Y$.
	
	\begin{theo}\label{theo:st_Borel}
		The measurable space $(\forceset{X}{Y}, \Bairealg{X}{Y})$ is standard Borel.
	\end{theo}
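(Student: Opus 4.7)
The plan is to reduce, via the Borel isomorphism theorem and a product decomposition, to the case $Y = 2$, and then identify $\forceset{X}{2}$ with the regular open algebra $\mathrm{RO}(X)$ of $X$.

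For the reduction: if $Y$ is uncountable, we may replace it by $2^\N$, and writing $f \mapsto (f_k)_k$ with $f_k(x) := f(x)(k)$ yields a bijection $\forceset{X}{2^\N} \cong \forceset{X}{2}^\N$ (since $f \stareq g$ iff $f_k \stareq g_k$ for every $k$). Under this bijection, $\Bairealg{X}{2^\N}$ is carried to the product $\sigma$\=/algebra: the generators $\forceset{U}{A}$ with $A$ a cylinder are exactly the product generators, and a monotone-class argument using the identities
\[
	\forceset{U}{A \cap B} = \forceset{U}{A} \cap \forceset{U}{B}, \qquad \forceset{U}{Y \setminus A} = \bigcap_{n \,:\, U_n \subseteq U}\bigl(\forceset{X}{Y} \setminus \forceset{U_n}{A}\bigr),
\]
\[
	\forceset{U}{\bigcup_i A_i} \,=\, \bigcap_{n \,:\, U_n \subseteq U}\bigcup_{n', i \,:\, U_{n'} \subseteq U_n} \forceset{U_{n'}}{A_i}
\]
(all consequences of the Baire alternative) extends the agreement to arbitrary Borel $A$. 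The countable $Y$ case is analogous. Hence it suffices to treat $Y = 2$.

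Fix a countable basis $\{U_n\}$ of nonempty open subsets of $X$. For a Baire measurable $f \colon X \to 2$, the set $W_f := \bigcup \{U_n \colon U_n \Vdash f^{-1}(1)\}$ is, by the Baire alternative, the unique regular open subset of $X$ with $W_f \stareq f^{-1}(1)$, so $f \mapsto W_f$ defines a bijection $\forceset{X}{2} \leftrightarrow \mathrm{RO}(X)$. The set $\mathrm{RO}(X)$ is a Borel subset of the standard Borel space of open subsets of $X$ (equivalent, via complement, to the Effros Borel space of closed subsets of $X$): an open $U$ is regular iff for every $n$, $U_n \subseteq U$ whenever every nonempty basic $U_{n'} \subseteq U_n$ meets $U$---a Borel condition---so $\mathrm{RO}(X)$ is standard Borel.

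It remains to check that under $f \leftrightarrow W_f$ the $\sigma$\=/algebras agree. The generators of $\Bairealg{X}{2}$ correspond to the Borel subsets $\{W \in \mathrm{RO}(X) \colon U_n \subseteq \overline{W}\}$ (image of $\forceset{U_n}{\{1\}}$) and $\{W \in \mathrm{RO}(X) \colon U_n \cap W = \emptyset\}$ (image of $\forceset{U_n}{\{0\}}$). Conversely, the standard hit-or-miss generators $\{W \colon U_n \subseteq W\}$ and $\{W \colon W \cap U_n = \emptyset\}$ of the Borel $\sigma$\=/algebra of $\mathrm{RO}(X)$ pull back to elements of $\Bairealg{X}{2}$ via the complement identity $\forceset{U_n}{\{0\}} = \bigcap_{n' \colon U_{n'} \subseteq U_n}(\forceset{X}{2} \setminus \forceset{U_{n'}}{\{1\}})$ (a special case of the second display above) together with its variants. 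The main obstacle I anticipate is this final bookkeeping, but each of the identifications follows from a direct application of the Baire alternative.
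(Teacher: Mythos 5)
Your proof is correct, but it takes a genuinely different route from the paper's. You reduce to $Y = 2$ via the Borel isomorphism theorem and the decomposition $\forceset{X}{2^\N} \cong \forceset{X}{2}^\N$, and then identify $\forceset{X}{2}$ with the regular open algebra $\mathrm{RO}(X)$, exhibited as a Borel subset of the standard Borel space of open subsets of~$X$. The paper instead handles a general $Y$ directly: it fixes a clopen basis $\mathcal{A}$ for a zero-dimensional \emph{compact} topology on $Y$, codes each class $[f]$ by the function $\theta_f \colon \mathcal{U} \times \mathcal{A} \to 2$ recording which forcing statements $U \Vdash f^{-1}(A)$ hold, and identifies the image of $f \mapsto \theta_f$ as a $G_\delta$ subset of $2^{\mathcal{U} \times \mathcal{A}}$ via two explicit conditions; the hard inclusion is proved by noting that the sets $R_x \defeq \bigcap\set{A \in \mathcal{A} \,:\, \theta(U,A)=1 \text{ for some } U \ni x}$ are nonempty compact and applying a Borel uniformization theorem to the relation $\set{(x,y) : y \in R_x}$. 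Your approach trades the uniformization argument and the detour through a compact topology on $Y$ for the bookkeeping needed to verify that the product decomposition carries $\Bairealg{X}{2^\N}$ to the product $\sigma$-algebra (and for the minor, correctly flagged as ``analogous,'' case of countable $Y$). One remark that would tidy your final step: for $W \in \mathrm{RO}(X)$ the conditions $U_n \subseteq \overline{W}$ and $U_n \subseteq W$ are equivalent (since $W = \operatorname{int}\overline{W}$), so the containment generator $\set{W : U_n \subseteq W}$ pulls back under $f \mapsto W_f$ precisely to $\forceset{U_n}{\set{1}}$, and the agreement of $\sigma$-algebras is immediate without invoking the complement identity.
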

	
	A $\sigma$-algebra $\mathfrak{S}$ on a set $Z$ \emph{separates points} if for all $z$, $z' \in Z$, if $z \neq z'$, then there exists $A \in \mathfrak{S}$ such that $z \in A$ and $z' \not \in A$.
	
	\begin{lemma}\label{lemma:seppoints}
		The $\sigma$-algebra $\Bairealg{X}{Y}$ separates points.
	\end{lemma}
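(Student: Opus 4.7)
The plan is to exhibit, for any two Baire measurable functions $f$, $g \colon X \to Y$ with $f \not\stareq g$, a generator of $\Bairealg{X}{Y}$ that contains exactly one of the corresponding $\ast$-equality classes. I would first fix a countable family $\set{B_n}_{n \in \N}$ of Borel subsets of $Y$ that separates points of $Y$; such a family exists by the Borel isomorphism theorem (e.g., pull back a countable basis of $2^\N$ along a Borel isomorphism when $Y$ is uncountable, and take singletons otherwise). I would also note, as a sanity check, that each generator $\forceset{U}{A}$ depends only on the $\ast$-equality class of a function, since $\ast$-equal functions have $\ast$-equal preimages of any fixed Borel set.

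The key preliminary step is the observation that $f \stareq g$ if and only if $f^{-1}(B_n) \stareq g^{-1}(B_n)$ for every $n \in \N$. One direction is immediate. For the other, assuming every symmetric difference $f^{-1}(B_n) \symdif g^{-1}(B_n)$ is meager, the set
\[
C \defeq \bigcap_{n \in \N} \bigl(X \setminus (f^{-1}(B_n) \symdif g^{-1}(B_n))\bigr)
\]
is comeager; for $x \in C$, the points $f(x)$ and $g(x)$ lie in exactly the same sets $B_n$ and therefore coincide by point-separation.

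With this in hand, suppose $f \not\stareq g$ and pick $n$ such that $f^{-1}(B_n) \symdif g^{-1}(B_n)$ is nonmeager. Then at least one of $f^{-1}(B_n) \setminus g^{-1}(B_n)$ or $g^{-1}(B_n) \setminus f^{-1}(B_n)$ is nonmeager; by symmetry assume the former, call it $D$. Since $D$ is Baire measurable, the Baire alternative furnishes a nonempty open set $U \subseteq X$ in which $D$ is comeager. A direct check then gives $U \Vdash f^{-1}(B_n)$, because $U \setminus f^{-1}(B_n) \subseteq U \setminus D$ is meager, while $U \not\Vdash g^{-1}(B_n)$, because $U \setminus g^{-1}(B_n) \supseteq U \cap D$ is comeager in $U$ and in particular nonmeager. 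Hence $f \in \forceset{U}{B_n}$ and $g \notin \forceset{U}{B_n}$, separating the two classes as required. I do not anticipate any genuine obstacle: once the countable separating family is identified, the Baire alternative does essentially all of the work, and the only mild subtlety is the well-definedness of the generators on the quotient $\forceset{X}{Y}$, which is handled at the outset.
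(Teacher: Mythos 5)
Your proof is correct, but it takes a genuinely different route from the paper's. The paper's argument is pointwise: it fixes a Polish topology on $Y$, invokes \cite[Theorem~8.38]{Kec95} to restrict $f$ and $g$ to a comeager $X'$ on which both are continuous, finds an actual point $x_0 \in X'$ with $f(x_0) \neq g(x_0)$, separates $f(x_0)$ and $g(x_0)$ by disjoint open sets $V$, $W \subseteq Y$, and then uses continuity to pull these back to a common basic open neighborhood $U$ of $x_0$, yielding $f \in \forceset{U}{V}$ while $g \in \forceset{U}{W}$ and hence $g \notin \forceset{U}{V}$. Your argument instead works entirely with preimages: you fix a countable point-separating family $\set{B_n}$ of Borel subsets of $Y$, show that $\ast$-equality of $f$ and $g$ is equivalent to $\ast$-equality of all the preimages $f^{-1}(B_n)$ and $g^{-1}(B_n)$, and then apply the Baire alternative directly to a nonmeager difference $f^{-1}(B_n) \setminus g^{-1}(B_n)$ to produce the separating generator $\forceset{U}{B_n}$. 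Your approach is somewhat more elementary in that it avoids the continuity-on-a-comeager-set theorem, leaning only on the Baire alternative and the existence of a countable separating family; the paper's is perhaps more geometric. Your explicit verification that the generators are well defined on $\ast$-equivalence classes is a useful observation that the paper leaves implicit. Both arguments are complete and correct.
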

	\begin{proof}
		Suppose that $f$, $g \in \forceset{X}{Y}$ are not $\ast$-equal, i.e., the set $\set{x \in X \,:\, f(x) \neq g(x)}$ is nonmeager. Fix an arbitrary Polish topology on $Y$ that generates its Borel $\sigma$-algebra. By~\cite[Theorem~8.38]{Kec95}, there is a comeager subset $X' \subseteq X$ such that the restricted functions $\rest{f}{X'}$, $\rest{g}{X'}$ are continuous. Then the set $\set{x \in X' \,:\, f(x) \neq g(x)}$ is also nonmeager, and hence nonempty. Consider any $x_0 \in X'$ with $f(x_0) \neq g(x_0)$ and let $V$, $W \subset Y$ be disjoint open neighborhoods of $f(x_0)$ and $g(x_0)$ respectively. By the continuity of $\rest{f}{X'}$ and $\rest{g}{X'}$, there exists an open neighborhood $U \subseteq X$ of $x_0$ such that $f(x) \in V$ and $g(x) \in W$ for all $x \in U \cap X'$. This yields $f \in \forceset{U}{V}$ and $g \in \forceset{U}{W}$. As $\forceset{U}{V} \cap \forceset{U}{W} = \0$, and so $g \not \in \forceset{U}{V}$, the proof is complete.
	\end{proof}
	
	\begin{lemma}\label{lemma:generators}
		Let $\mathcal{U}$ be a countable basis for the topology on $X$ consisting of nonempty open sets, and let $\mathcal{A}$ be a countable basis for the Borel $\sigma$-algebra on $Y$. Then $\Bairealg{X}{Y}$ is generated by the family of sets
		\[
			\forceset{\mathcal{U}}{\mathcal{A}} \defeq \set{\forceset{U}{A} \,:\, U \in \mathcal{U},\, A \in \mathcal{A}}.
		\]
	\end{lemma}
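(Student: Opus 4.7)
The plan is to show $\Bairealg{X}{Y} = \mathfrak{S}$, where $\mathfrak{S}$ denotes the $\sigma$-algebra generated by $\forceset{\mathcal{U}}{\mathcal{A}}$. The inclusion $\mathfrak{S} \subseteq \Bairealg{X}{Y}$ is immediate, so the content is in the reverse inclusion: every $\forceset{U}{A}$ with $U \subseteq X$ open and $A \subseteq Y$ Borel must lie in $\mathfrak{S}$.

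The first step is to reduce from arbitrary open $U$ to basis open sets. For any Baire measurable $B \subseteq X$, $U \setminus B$ is meager if and only if $V \setminus B$ is meager for every $V \in \mathcal{U}$ with $V \subseteq U$, because $U \setminus B = \bigcup\set{V \setminus B : V \in \mathcal{U},\, V \subseteq U}$ is a countable union of meager sets. Applied to $B = f^{-1}(A)$, this yields $\forceset{U}{A} = \bigcap\set{\forceset{V}{A} : V \in \mathcal{U},\, V \subseteq U}$. Hence it suffices to verify $\forceset{U}{A} \in \mathfrak{S}$ for every $U \in \mathcal{U}$ and every Borel $A \subseteq Y$.

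To that end, I would let $\mathcal{C} \defeq \set{A \in \mathfrak{B}(Y) : \forceset{U}{A} \in \mathfrak{S} \text{ for every } U \in \mathcal{U}}$, and aim to show $\mathcal{C}$ is a $\sigma$-algebra containing $\mathcal{A}$; since $\mathcal{A}$ generates $\mathfrak{B}(Y)$, this would force $\mathcal{C} = \mathfrak{B}(Y)$. Trivially $Y \in \mathcal{C}$ (as $\forceset{U}{Y} = \forceset{X}{Y}$) and $\mathcal{A} \subseteq \mathcal{C}$ by hypothesis. Closure under countable intersections follows from the identity $\forceset{U}{\bigcap_n A_n} = \bigcap_n \forceset{U}{A_n}$, valid because $U \Vdash \bigcap_n f^{-1}(A_n)$ iff each $U \Vdash f^{-1}(A_n)$ (countable unions of meager sets being meager).

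The heart of the argument, and the step I expect to be the main obstacle, is closure under complements. The subtlety is that $\forceset{U}{Y \setminus A}$ is not the set-theoretic complement of $\forceset{U}{A}$: membership in the former means $U \cap f^{-1}(A)$ is meager, which is strictly stronger than merely $U \not\Vdash f^{-1}(A)$. The Baire alternative bridges this gap: applied to the Baire measurable set $U \cap f^{-1}(A)$ within the Polish space $U$, it gives that $U \cap f^{-1}(A)$ is nonmeager iff some nonempty open $V \subseteq U$ satisfies $V \Vdash f^{-1}(A)$, and by the basis property this is equivalent to the same holding for some $V \in \mathcal{U}$ with $V \subseteq U$. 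Rewriting,
\[
\forceset{U}{Y \setminus A} \,=\, \forceset{X}{Y} \,\setminus\, \bigcup\set{\forceset{V}{A} : V \in \mathcal{U},\, V \subseteq U},
\]
which lies in $\mathfrak{S}$ whenever $A \in \mathcal{C}$. Combined with closure under intersections, this makes $\mathcal{C}$ a $\sigma$-algebra, and the lemma follows by pulling the basis-level conclusion back through the reduction of the first step.
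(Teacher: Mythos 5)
Your proof is correct and uses essentially the same approach as the paper: the same three identities (reducing the open set argument to basis sets via unions, handling complements of the Borel argument via the Baire alternative, and handling intersections directly), with the ``good set'' $\mathcal{C}$ being a slightly more explicit packaging of what the paper does implicitly.
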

	\begin{proof}
		Since for $U_0$, $U_1$, $\ldots \in \mathcal{U}$ and $A \in \mathcal{A}$, we have
		\[
			\forceset{\bigcup_{i=0}^\infty U_i}{A} \,=\, \bigcap_{i=0}^\infty \forceset{U_i}{A},
		\]
		the $\sigma$-algebra $\Bairealg{X}{Y}$ is generated by the sets $\forceset{U}{A}$ with $U \in \mathcal{U}$. Since we also have
		\[
			\forceset{U}{A^\mathsf{c}} \,=\, \textstyle\bigcap \set{\forceset{V}{A}^\mathsf{c} \,:\, V \in \mathcal{U}, \, V \subseteq U},
		\]
		where $(\cdot)^\mathsf{c}$ denotes set complement, and
		\[
			\forceset{U}{\bigcap_{i=0}^\infty A_i} \, = \, \bigcap_{i=0}^\infty \forceset{U}{A_i},
		\]
		we conclude that $\Bairealg{X}{Y}$ is indeed generated by the sets $\forceset{U}{A}$ with $U \in \mathcal{U}$ and $A \in \mathcal{A}$.
	\end{proof}
	
	
	\begin{prop}\label{prop:criterion}
		Let $(Z, \mathfrak{S})$ be a measurable space such that the $\sigma$-algebra $\mathfrak{S}$ separates points. Let $\mathcal{A} \subseteq \mathfrak{S}$ be a countable generating set for $\mathfrak{S}$. For each $z \in Z$, define $\theta_z \colon \mathcal{A} \to 2$ as follows:
		$$
		\theta_z (A) \defeq \begin{cases}
		1 &\text{if } z \in A;\\
		0 &\text{if } z \not \in A.
		\end{cases}
		$$
		Let $\Theta$ denote the image of $Z$ under the map $z \mapsto \theta_z$. Then $(Z, \mathfrak{S})$ is standard Borel if and only if $\Theta$ is a Borel subset of the product space $2^{\mathcal{A}}$.
	\end{prop}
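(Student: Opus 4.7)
The plan is to study the evaluation map $\Phi \colon Z \to 2^{\mathcal{A}}$ defined by $\Phi(z) \defeq \theta_z$, show that it is a bi-measurable injection onto its image $\Theta$, and then obtain the equivalence by invoking the Lusin--Souslin theorem on one direction and a straightforward transfer of structure on the other.

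First I would verify the basic properties of $\Phi$. Injectivity follows immediately from the hypothesis that $\mathfrak{S}$ separates points: if $z \neq z'$, then some $B \in \mathfrak{S}$ contains exactly one of them, and because $\mathcal{A}$ generates $\mathfrak{S}$, a standard $\sigma$-algebra induction shows that in fact some $A \in \mathcal{A}$ distinguishes $z$ from $z'$, so $\theta_z(A) \neq \theta_{z'}(A)$. Next, since $\mathcal{A}$ is countable, the Borel $\sigma$-algebra on $2^{\mathcal{A}}$ is generated by the clopen cylinders $C_A \defeq \set{\theta \in 2^{\mathcal{A}} \,:\, \theta(A) = 1}$ with $A \in \mathcal{A}$. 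Because $\Phi^{-1}(C_A) = A$, the map $\Phi$ is measurable as a map $(Z, \mathfrak{S}) \to 2^{\mathcal{A}}$, and conversely the pullback of the Borel $\sigma$-algebra of $2^{\mathcal{A}}$ under $\Phi$ is a $\sigma$-algebra on $Z$ containing every $A \in \mathcal{A}$ and hence containing $\mathfrak{S}$; in fact, being generated by pullbacks of the $C_A$, it equals $\mathfrak{S}$. Thus $\Phi$ is a bi-measurable bijection $(Z, \mathfrak{S}) \to (\Theta, \mathfrak{B}(\Theta))$, where $\mathfrak{B}(\Theta)$ denotes the relative Borel $\sigma$-algebra on $\Theta \subseteq 2^{\mathcal{A}}$.

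For the backward direction, assume $\Theta$ is Borel in $2^{\mathcal{A}}$. Then $(\Theta, \mathfrak{B}(\Theta))$ is a standard Borel space as a Borel subset of the Polish space $2^{\mathcal{A}}$, and the bi-measurable bijection $\Phi$ transports this structure to $(Z, \mathfrak{S})$, so the latter is standard Borel. For the forward direction, assume that $(Z, \mathfrak{S})$ is standard Borel. Then $\Phi \colon Z \to 2^{\mathcal{A}}$ is a Borel injection from a standard Borel space into a Polish space, and the Lusin--Souslin theorem (\cite[Theorem~15.1]{Kec95}) guarantees that the image $\Theta = \Phi(Z)$ is Borel in $2^{\mathcal{A}}$.

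The only non-routine ingredient is the appeal to Lusin--Souslin in the forward direction; everything else reduces to noting that a countable generating family for a point-separating $\sigma$-algebra produces a canonical Borel embedding into $2^{\mathcal{A}}$.
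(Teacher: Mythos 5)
Your proposal is correct and takes essentially the same route as the paper: establish that $z \mapsto \theta_z$ is a measurable-space isomorphism from $(Z, \mathfrak{S})$ onto $(\Theta, \mathfrak{B}(\Theta))$, then equate ``$(Z,\mathfrak{S})$ standard Borel'' with ``$\Theta$ Borel'' via Lusin--Souslin in one direction and the fact that a Borel subset of a Polish space is standard Borel in the other. The only cosmetic difference is that you spell out the $\sigma$-algebra induction showing that $\mathcal{A}$ itself separates points, a detail the paper leaves implicit.
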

	\begin{proof}
		Let $\mathfrak{B} \defeq \mathfrak{B}(2^\mathcal{A})$ denote the Borel $\sigma$-algebra on $2^\mathcal{A}$. Since $\mathfrak{S}$ separates points, the map $z \mapsto \theta_z$ is injective; by construction, it is therefore an isomorphism of measurable spaces $(Z, \mathfrak{S})$ and $(\Theta, \rest{\mathfrak{B}}{\Theta})$, where $\rest{\mathfrak{B}}{\Theta}$ is the relative $\sigma$-algebra on $\Theta$. 
		Thus, $(Z, \mathfrak{S})$ is standard Borel if and only if so is $(\Theta, \rest{\mathfrak{B}}{\Theta})$; by the Luzin--Suslin theorem~\cite[Theorem~15.1]{Kec95}, the latter condition is equivalent to~$\Theta$ being a Borel subset of $2^\mathcal{A}$.
	\end{proof}
	
	\begin{proof}[Proof of Theorem~\ref{theo:st_Borel}]
		Let $\mathcal{U}$ be a countable basis for the topology $X$ consisting of nonempty open sets. Using the Borel isomorphism theorem \cite[Theorem~15.6]{Kec95}, we can choose a zero\=/dimensional compact metrizable topology on $Y$ that generates its Borel $\sigma$-algebra; let $\mathcal{A}$ be a countable basis for that topology consisting of sets that are simultaneously open and closed.
		
		For each $f \in \forceset{X}{Y}$, define $\theta_f \colon \mathcal{U} \times \mathcal{A} \to 2$ as follows:
		\[
		\theta_f(U, A) \defeq \begin{cases}
		1 &\text{if } U \Vdash f^{-1}(A);\\
		0 &\text{if } U \not \Vdash f^{-1}(A),
		\end{cases}
		\]
		and let $\Theta$ denote the image of $\forceset{X}{Y}$ under the map $f \mapsto \theta_f$. In view of Lemmas~\ref{lemma:seppoints},~\ref{lemma:generators}, and Proposition~\ref{prop:criterion}, we only need to show that $\Theta$ is a Borel subset of the product space $2^{\mathcal{U} \times \mathcal{A}}$.
		
		Let $\Theta'$ denote the set of all functions $\theta \colon \mathcal{U} \times \mathcal{A} \to 2$ satisfying the following two requirements:
		\begin{enumerate}[label=\normalfont{(\arabic*)}]
			\item\label{item:fip} for all $k \in \N$, $U_1$, \ldots, $U_k \in \mathcal{U}$, and $A_1$, \ldots, $A_k \in \mathcal{A}$,
			\[
			\begin{array}{lcl}
			\text{if} & &U_1 \cap \ldots \cap U_k \neq \0 \;\;\;\;\text{and}\;\;\;\; \theta(U_1, A_1) = \ldots = \theta(U_k, A_k) = 1,\\
			\text{then} & &A_1 \cap \ldots \cap A_k \neq \0;
			\end{array}
			\]
			\item\label{item:proof} for all $U \in \mathcal{U}$ and $A \in \mathcal{A}$, if $\theta(U, A) = 0$, then there exist $V \in \mathcal{U}$ and $B \in \mathcal{A}$ such that
			\[V \subseteq U, \;\;\;\;\; B \cap A = \0, \;\;\;\;\; \text{and} \;\;\;\;\; \theta(V, B) = 1.\]
		\end{enumerate}
		Note that $\Theta'$ is evidently a Borel (in fact, $G_\delta$) subset of $2^{\mathcal{U} \times \mathcal{A}}$. 
		
		\begin{claim}\label{claim:first_inclusion}
			$\Theta \subseteq \Theta'$.
		\end{claim}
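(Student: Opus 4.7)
The plan is to fix an arbitrary $f \in \forceset{X}{Y}$ and verify that $\theta_f$ satisfies both conditions \ref{item:fip} and \ref{item:proof}. Each check reduces, via the definitions of $\theta_f$ and $\Vdash$, to a short Baire-category computation exploiting the basis properties of $\mathcal{U}$ and $\mathcal{A}$.

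For \ref{item:fip}: given $U_1, \ldots, U_k \in \mathcal{U}$ whose intersection $W \defeq U_1 \cap \cdots \cap U_k$ is nonempty and $A_1, \ldots, A_k \in \mathcal{A}$ with each $\theta_f(U_i, A_i) = 1$, each $U_i \setminus f^{-1}(A_i)$ is meager by hypothesis, hence so is $W \setminus f^{-1}(A_1 \cap \cdots \cap A_k) = \bigcup_i (W \setminus f^{-1}(A_i))$. Since $W$ is a nonempty open, hence nonmeager, set, the intersection $W \cap f^{-1}(A_1 \cap \cdots \cap A_k)$ is nonempty, and any point therein witnesses $A_1 \cap \cdots \cap A_k \neq \0$.

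For \ref{item:proof}: if $\theta_f(U, A) = 0$, then $U \cap f^{-1}(Y \setminus A)$ is nonmeager. Because $\mathcal{A}$ is a basis of clopen sets for the zero-dimensional topology fixed on $Y$, one has $Y \setminus A = \bigcup \set{B \in \mathcal{A} \,:\, B \cap A = \0}$, whence $U \cap f^{-1}(Y \setminus A) = \bigcup \set{U \cap f^{-1}(B) \,:\, B \in \mathcal{A},\, B \cap A = \0}$. By countable subadditivity of the meager ideal, some $U \cap f^{-1}(B)$ in this union is nonmeager. Being Baire measurable, this set is then comeager in some nonempty open $W \subseteq X$ by the Baire alternative.

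The one point requiring care---my expected main obstacle---is that the set $W$ produced by the Baire alternative need neither belong to $\mathcal{U}$ nor lie inside $U$, whereas \ref{item:proof} demands precisely a basic open set contained in $U$. I would resolve this by first noting $W \cap U \neq \0$ (otherwise $W \setminus (U \cap f^{-1}(B)) = W$ would be a meager nonempty open set, contradicting Baire's theorem), so that $W \cap U$ is nonempty open and the basis property of $\mathcal{U}$ yields $V \in \mathcal{U}$ with $V \subseteq W \cap U$. Then $V \setminus f^{-1}(B) \subseteq W \setminus (U \cap f^{-1}(B))$ is meager, so $V \Vdash f^{-1}(B)$, i.e., $\theta_f(V, B) = 1$, and the pair $(V, B)$ witnesses \ref{item:proof}.
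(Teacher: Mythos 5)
Your proof is correct and follows the same approach as the paper's: verify conditions \ref{item:fip} and \ref{item:proof} for $\theta_f$ directly by unwinding the definition of $\Vdash$. The one place you go beyond the paper is in step \ref{item:proof}: the paper simply invokes the Baire alternative to produce $V \in \mathcal{U}$ with $V \subseteq U$ and $V \Vdash f^{-1}(B)$, whereas you spell out why the open set supplied by the Baire alternative can be shrunk to a basic open subset of $U$ --- a detail the paper treats as routine (one could also just apply the Baire alternative inside the open subspace $U$), but your check is accurate and worth being aware of.
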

		\begin{claimproof}
			Let $f \in \forceset{X}{Y}$. We need to show that $\theta_f$ satisfies conditions~\ref{item:fip} and~\ref{item:proof}.
			
			\begin{itemize}[wide]
				\item[\ref{item:fip}] If $U_1$, \ldots, $U_k \in \mathcal{U}$ and $A_1$, \ldots, $A_k \in \mathcal{A}$ are such that
				\[
				U_1 \cap \ldots \cap U_k \neq \0 \;\;\;\;\text{and}\;\;\;\; \theta_f(U_1, A_1) = \ldots = \theta_f(U_k, A_k) = 1,
				\]
				then $U_1 \cap \ldots \cap U_k$ is nonempty open and
				\[
				U_1 \cap \ldots \cap U_k \,\Vdash\, f^{-1}(A_1) \cap \ldots \cap f^{-1}(A_k) \,=\, f^{-1}(A_1 \cap \ldots \cap A_k),
				\]
				implying that $f^{-1}(A_1 \cap \ldots \cap A_k)$ is nonmeager, and hence $A_1 \cap \ldots \cap A_k \neq \0$.
				
				\item[\ref{item:proof}] Let $U \in \mathcal{U}$ and $A \in \mathcal{A}$ be such that $\theta_f(U, A) = 0$, i.e., $U \not \Vdash f^{-1}(A)$. The sets in $\mathcal{A}$ are simultaneously open and closed; in particular, the complement of $A$ is open and hence equal to the union of all $B \in \mathcal{A}$ with $B \cap A = \0$. Therefore, for some $B \in \mathcal{A}$ with $B \cap A = \0$, the set $U \cap f^{-1}(B)$ is nonmeager. By the Baire alternative, there is $V \in \mathcal{U}$ such that $V \subseteq U$ and $V \Vdash f^{-1}(B)$, i.e., $\theta_f(V, B) = 1$. \qedhere
			\end{itemize}
		\end{claimproof}
		
		\begin{claim}\label{claim:second_inclusion}
			$\Theta' \subseteq \Theta$.
		\end{claim}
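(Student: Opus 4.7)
The plan is to build $f \in \forceset{X}{Y}$ by specifying $f(x)$, for comeagerly many $x$, as the unique point in $Y$ pinned down by the local data of $\theta$ around $x$. The key structural input is the choice of~$\mathcal{A}$ to consist of clopen sets in a compact metrizable topology on~$Y$: this will let the finite intersection property from condition~\ref{item:fip} convert into genuinely nonempty intersections of closed sets, which is exactly what makes such a point exist.

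For each $x \in X$, I would set $\mathcal{F}_x \defeq \set{A \in \mathcal{A} \,:\, \theta(U, A) = 1 \text{ for some } U \in \mathcal{U} \text{ with } x \in U}$. Any finitely many witnessing $U$'s all contain $x$, so by~\ref{item:fip} the corresponding $A$'s have nonempty intersection; hence $\mathcal{F}_x$ is a family of clopen subsets of the compact space $Y$ with the finite intersection property, and $\bigcap \mathcal{F}_x \neq \0$. To force this intersection to be a singleton, for each $A \in \mathcal{A}$ I would define
\[
G_A \defeq \bigcup \set{V \in \mathcal{U} \,:\, \theta(V, A) = 1 \text{ or } \theta(V, B) = 1 \text{ for some } B \in \mathcal{A} \text{ with } B \cap A = \0}.
\]
Each $G_A$ is open, and condition~\ref{item:proof} makes it dense: for any $U \in \mathcal{U}$, either $\theta(U, A) = 1$ (so $U \subseteq G_A$) or \ref{item:proof} supplies $V \subseteq U$ and $B$ witnessing $V \subseteq G_A$. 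Then $X^\ast \defeq \bigcap_{A \in \mathcal{A}} G_A$ is comeager by the Baire category theorem. For $x \in X^\ast$, two distinct points of $\bigcap \mathcal{F}_x$ would be separated by some $A \in \mathcal{A}$, but $x \in G_A$ would place either $A$ or a disjoint $B$ into $\mathcal{F}_x$, contradicting one of those points lying in the intersection. So $\bigcap \mathcal{F}_x = \set{y_x}$ is a singleton, and I would set $f(x) \defeq y_x$ for $x \in X^\ast$ and extend $f$ arbitrarily off $X^\ast$.

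It remains to verify that $f$ is Baire measurable and that $\theta_f = \theta$. Setting $E_A \defeq \bigcup \set{U \in \mathcal{U} \,:\, \theta(U, A) = 1}$, the defining property of $X^\ast$ gives, for $x \in X^\ast$, the chain of equivalences $f(x) \in A \Longleftrightarrow A \in \mathcal{F}_x \Longleftrightarrow x \in E_A$; hence $f^{-1}(A) \stareq E_A$ for every $A \in \mathcal{A}$. As $\mathcal{A}$ generates $\mathfrak{B}(Y)$, Baire measurability of $f$ follows. Then $\theta(U, A) = 1$ gives $U \subseteq E_A$, hence $U \Vdash f^{-1}(A)$; conversely, if $\theta(U, A) = 0$, then~\ref{item:proof} provides $V \subseteq U$ and $B$ disjoint from $A$ with $\theta(V, B) = 1$, whence $V \Vdash f^{-1}(B) \subseteq f^{-1}(A^\mathsf{c})$, ruling out $U \Vdash f^{-1}(A)$.

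The only real subtlety I anticipate is balancing two requirements against each other: compactness of $Y$ is needed to realize the filter $\mathcal{F}_x$ as an actual point, while countability of $\mathcal{A}$ is needed to ensure $X^\ast$ is comeager via countably many dense-open conditions. The clopen basis produced by the Borel isomorphism theorem addresses both at once, which is why it was invoked at the outset of the proof of Theorem~\ref{theo:st_Borel}.
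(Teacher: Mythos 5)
Your argument is correct, and it takes a genuinely different route from the paper's. Both start the same way: define $\mathcal{A}_x$ (your $\mathcal{F}_x$) and use compactness of $Y$ together with condition~\ref{item:fip} to get that $R_x \defeq \bigcap \mathcal{A}_x$ is nonempty. The paper then treats $R \defeq \set{(x,y) : y \in R_x}$ as a closed subset of $X \times Y$ with nonempty compact sections and invokes a Borel uniformization theorem (\cite[Theorem~28.8]{Kec95}) to obtain a Borel selector $f(x) \in R_x$ on all of $X$; condition~\ref{item:proof} is then used only in the final verification that $\theta_f = \theta$. You instead press condition~\ref{item:proof} into service earlier: by making the sets $G_A$ dense open and intersecting over the countable basis $\mathcal{A}$, you shrink $R_x$ to a singleton for every $x$ in a comeager $G_\delta$ set $X^\ast$, so that the selector is canonical and no uniformization theorem is needed. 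The verification is then the same modulo a meager set. The trade-off is clear: the paper's proof is shorter but leans on a nontrivial selection theorem, while yours is more self-contained at the cost of the extra Baire-category step and of only producing $f$ directly on a comeager set (the arbitrary extension off $X^\ast$ is harmless because $f^{-1}(A) \stareq E_A$ is all one needs for Baire measurability and for computing $\theta_f$). Both approaches rely essentially on the same use of compactness of $Y$ and the clopen basis.
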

		\begin{claimproof}
			Let $\theta \in \Theta'$. We need to find a function $f \in \forceset{X}{Y}$ such that $\theta_f = \theta$.
			
			For each $x \in X$, let
			\[
			\mathcal{A}_x \defeq \set{A \in \mathcal{A} \,:\, \theta(U, A) = 1 \text{ for some } U \in \mathcal{U} \text{ with } U \ni x}.
			\]
			Note that $\mathcal{A}_x$ is a family of closed subsets of the compact space~$Y$, and condition~\ref{item:fip} implies that it has the finite intersection property; therefore, $R_x \defeq \bigcap \mathcal{A}_x$ is a nonempty compact set. The set
			\[
			R \defeq \set{(x, y) \in X \times Y \,:\, y \in R_x}
			\]
			is Borel (in fact, closed) in $X \times Y$, so by~\cite[Theorem~28.8]{Kec95}, there exists a Borel map $f \colon X \to Y$ such that $f(x) \in R_x$ for all $x \in X$. We claim that $\theta_f = \theta$ for any such $f$.
			
			Indeed, let $U \in \mathcal{U}$ and $A \in \mathcal{A}$. If $\theta(U, A) = 1$, then for all $x \in U$, \[f(x) \in R_x \subseteq A,\] so $U \subseteq f^{-1}(A)$, and thus $\theta_f(U, A) = 1$. On the other hand, if $\theta(U, A) = 0$, then, by~\ref{item:proof}, there exist sets $V \in \mathcal{U}$ and $B \in \mathcal{A}$ such that \[V \subseteq U, \;\;\;\;\; B \cap A = \0, \;\;\;\;\; \text{and} \;\;\;\;\; \theta(V, B) = 1.\] By the previous argument, $\theta_f(V, B) = 1$, i.e., $V \Vdash f^{-1}(B)$. Since $B \cap A = \0$, this implies $U \not \Vdash f^{-1}(A)$, and so $\theta_f(U, A) = 0$.
		\end{claimproof}
		Together, Claims~\ref{claim:first_inclusion} and~\ref{claim:second_inclusion} yield $\Theta = \Theta'$; in particular, $\Theta$ is Borel.
	\end{proof}
	
	\begin{corl}\label{corl:analytic}
		Let $\alpha \colon \G \acts X$ be a continuous action of $\G$ on a Polish space $X$. Then the set $\S_\BM(\alpha, \N)$ is analytic.
	\end{corl}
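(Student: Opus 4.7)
The plan is to realize $\S_\BM(\alpha, \N)$ as the projection of a Borel subset of the product space $\S_0(\G, \N) \times \forceset{X}{\N}$, where $\forceset{X}{\N}$ is the standard Borel space of $\ast$-equivalence classes of Baire measurable maps $X \to \N$ furnished by Theorem~\ref{theo:st_Borel}. Specifically, I would consider
\[
B \,\defeq\, \{(\subshift, [f]) \in \S_0(\G, \N) \times \forceset{X}{\N} \,:\, \pi_f^{-1}(\subshift) \text{ is comeager in } X\};
\]
by Definition~\ref{defn:BM_col}, the first-coordinate projection of $B$ is precisely $\S_\BM(\alpha, \N)$, so it is enough to verify that $B$ is Borel. (Note that the condition depends only on $[f]$, since $\G$ is countable and each $\alpha_\gamma$ is a homeomorphism of $X$.)

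The key reduction is to unfold the membership $\pi_f(x) \in \subshift$ via the extendable ideal $\FPC{\subshift}$: we have $\pi_f(x) \in \subshift$ if and only if $\rest{\pi_f(x)}{S} \in \FPC{\subshift}$ for every finite $S \subseteq \G$. For $\phi \in \finfun{\G}{\N}$, set
\[
A_\phi^f \,\defeq\, \{x \in X \,:\, f(\gamma \cdot x) = \phi(\gamma) \text{ for all } \gamma \in \dom(\phi)\} \,=\, \bigcap_{\gamma \in \dom(\phi)} \alpha_\gamma^{-1}(f^{-1}(\{\phi(\gamma)\})).
\]
Then $X \setminus \pi_f^{-1}(\subshift) = \bigcup_{\phi \notin \FPC{\subshift}} A_\phi^f$, a countable union; so $\pi_f^{-1}(\subshift)$ is comeager if and only if $A_\phi^f$ is meager for every $\phi \notin \FPC{\subshift}$. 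Rearranging quantifiers,
\[
B \,=\, \bigcap_{\phi \in \finfun{\G}{\N}} \big((\S_0(\G, \N) \times \{[f] \,:\, A_\phi^f \text{ is meager}\}) \,\cup\, (\{\subshift \,:\, \phi \in \FPC{\subshift}\} \times \forceset{X}{\N})\big).
\]

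It remains to verify that each piece on the right is Borel. Each set $\{\subshift \,:\, \phi \in \FPC{\subshift}\}$ is in fact open in $\S_0(\G, \N)$ by the topology constructed in Section~\ref{sec:ideals}. For the meagerness condition I would invoke the Baire alternative: a Baire measurable set is meager if and only if no nonempty open set forces it, and it suffices to range $U$ over a countable basis $\mathcal{U}$ of $X$. Because $A_\phi^f$ is a finite intersection of $\alpha_\gamma$-translates and each $\alpha_\gamma$ is a homeomorphism, for $U \in \mathcal{U}$ one readily checks
\[
U \Vdash A_\phi^f \iff [f] \in \bigcap_{\gamma \in \dom(\phi)} \forceset{\alpha_\gamma(U)}{\{\phi(\gamma)\}}.
\]
The right-hand side is a finite intersection of generators of $\Bairealg{X}{\N}$, and intersecting over $U \in \mathcal{U}$ (then complementing) exhibits "$A_\phi^f$ is meager" as a Borel condition on $[f]$.

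Putting the pieces together shows that $B$ is a countable intersection of Borel cylinders, so $B$ is Borel and its projection $\S_\BM(\alpha, \N)$ is analytic. I do not anticipate any genuine obstacle: the substantive work is already packaged in Theorem~\ref{theo:st_Borel}, and the remainder is a routine unfolding of Definition~\ref{defn:BM_col} coupled with the Baire alternative.
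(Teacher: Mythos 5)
Your proposal is correct and rests on the same pillar as the paper's proof, namely Theorem~\ref{theo:st_Borel}, followed by a projection argument. The paper parametrizes by Baire measurable \emph{equivariant} maps: it works in $\forceset{X}{\N^\G}$, cuts out the Borel subset $\mathbf{Hom}(\alpha, \sigma_\N)$ of maps that are equivariant on a comeager set, and projects the (Borel) set of pairs $(\pi, \subshift)$ with $\pi^{-1}(\subshift)$ comeager; the Borelness of these two conditions is declared ``routine to check.'' You instead parametrize directly by Baire measurable \emph{colorings} $f$, i.e., work in $\forceset{X}{\N}$, which sidesteps the need to express equivariance as a Borel condition, since $\pi_f$ is equivariant by construction and the relevant conditions depend only on $[f]$. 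You also unpack the ``routine'' verification explicitly: membership $\pi_f(x)\in\subshift$ unfolds via the ideal $\FPC{\subshift}$ into countably many conditions $A_\phi^f$ meager for $\phi\notin\FPC{\subshift}$, and each meagerness condition is rendered Borel by the Baire alternative together with the identity $U\Vdash A_\phi^f\iff [f]\in\bigcap_{\gamma\in\dom(\phi)}\forceset{\alpha_\gamma(U)}{\{\phi(\gamma)\}}$, which is a finite intersection of $\Bairealg{X}{\N}$-generators. The net effect is the same projection of a Borel set, but your formulation is marginally leaner (one fewer layer) and more self-contained, at the cost of being more computational. No gap.
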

	\begin{proof}
		It is routine to check that the set
		\[
			\mathbf{Hom}(\alpha, \sigma_\N) \defeq \set{\pi \in \forceset{X}{\N^\G} \,:\, \pi \text{ is equivariant on a comeager set}}
		\]
		is a Borel subset of $\forceset{X}{\N^\G}$; furthermore, the set
		\[
			\set{(\pi, \subshift) \in \mathbf{Hom}(\alpha, \sigma_\N) \times \S_0(\G, \N) \,:\, \pi^{-1}(\subshift) \text{ is comeager}}
		\]
		is a Borel subset of $\mathbf{Hom}(\alpha, \sigma_\N) \times \S_0(\G, \N)$. As
		\[
			\S_\BM(\alpha, \N) = \set{\subshift \in \S_0(\G, \N) \,:\, \pi^{-1}(\subshift) \text{ is comeager for some } \pi \in \mathbf{Hom}(\alpha, \sigma_\N)},
		\]
		we see that $\S_\BM(\alpha, \N)$ is analytic.
	\end{proof}
	
	\subsection{Smoothness}\label{subsec:smooth}
	
	Let $X$ be a Polish (or, more generally, standard Borel) space. An equivalence relation $\E$ on $X$ is said to be Borel if the set
	\[
		\set{(x, y) \in X \times X \,:\, x \text{ and } y \text{ are $\E$-equivalent} }
	\]
	is Borel in $X \times X$. A Borel equivalence relation $\E$ on $X$ is \emph{smooth} if there is a Borel function $f \colon X \to \R$ such that for all $x$, $y \in X$,
	\[
		f(x) = f(y) \,\Longleftrightarrow\, x \text{ and } y \text{ are $\E$-equivalent}.
	\]
	A set $T \subseteq X$ is a \emph{transversal} for $\E$ if every $\E$-class intersects $T$ in exactly one point. The following useful proposition follows from the Luzin--Novikov theorem~\cite[Theorem~18.10]{Kec95}:
	
	\begin{prop}[{\cite[Proposition~20.6]{Tse16}}]\label{prop:smooth}
		Let $\E$ be a Borel equivalence relation on a Polish space $X$. Suppose that every $\E$-class is countable. Then the following statements are equivalent:
		\begin{itemize}
			\item[--] $\E$ is smooth;
			\item[--] there exists a Borel transversal $T \subseteq X$ for $\E$.
		\end{itemize}
	\end{prop}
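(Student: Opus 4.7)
The plan is to prove both implications using the Luzin--Novikov theorem~\cite[Theorem~18.10]{Kec95} as the main tool.

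For $(\Leftarrow)$, given a Borel transversal $T$, I would define the reduction $r \colon X \to T$ sending each $x$ to the unique point of $[x]_\E \cap T$. To see that $r$ is Borel, I would observe that for every Borel $B \subseteq T$,
\[
	r^{-1}(B) \,=\, \mathrm{proj}_X\big(\set{(x, y) \in X \times X \,:\, x \mathrel{\E} y,\, y \in T \cap B}\big),
\]
which is the projection of a Borel set whose $x$-sections are contained in the countable $\E$-classes; Luzin--Novikov then guarantees this projection is Borel. Since $T$, being Borel in $X$, is standard Borel, it admits a Borel injection into $\R$, and composing $r$ with such an injection produces the desired function witnessing smoothness.

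For the harder direction $(\Rightarrow)$, suppose $f \colon X \to \R$ is Borel with $f(x)=f(y) \Longleftrightarrow x \mathrel{\E} y$. My plan is to invert $f$ via Luzin--Novikov: apply the theorem to
\[
	F \,\defeq\, \set{(s, x) \in \R \times X \,:\, f(x) = s},
\]
whose sections over $\R$ are either empty or an $\E$-class, hence countable. This yields Borel sets $D_n \subseteq \R$ and Borel functions $h_n \colon D_n \to X$ whose graphs cover $F$. A brief check shows each $h_n$ is \emph{injective}: if $h_n(s)=h_n(s')=x$, then $s = f(x) = s'$. Disjointifying, I would set $E_n \defeq D_n \setminus \bigcup_{m<n} D_m$ and
\[
	T \,\defeq\, \bigsqcup_{n} h_n(E_n).
\]
By the Luzin--Suslin theorem~\cite[Theorem~15.1]{Kec95}, each $h_n(E_n)$ is Borel in $X$, so $T$ is Borel; and for every $x$ the value $f(x)$ lies in a unique $E_n$, so $T \cap [x]_\E = \set{h_n(f(x))}$, confirming that $T$ meets every $\E$-class in exactly one point.

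The main obstacle I anticipate is that the image $f(X)$ is in general only analytic, not necessarily Borel, which rules out the naive approach of defining a Borel section $f(X) \to X$ by picking a preimage per point. The Luzin--Novikov decomposition circumvents this by providing a countable cover of $f(X)$ by Borel sets on each of which the fibers are enumerated Borel-measurably; once disjointified, these give a Borel partition over which selection is immediate, and injectivity of the $h_n$ combined with Luzin--Suslin keeps the resulting transversal inside the Borel realm.
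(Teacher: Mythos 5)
The paper cites this proposition from \cite[Proposition~20.6]{Tse16} without reproducing a proof, so there is no in-paper argument to compare against. Your argument is a correct and essentially standard derivation via Luzin--Novikov: the $(\Leftarrow)$ direction correctly Borel-izes the selector $r$ by projecting a Borel set with countable $x$-sections and then embeds the transversal into $\R$; the $(\Rightarrow)$ direction correctly applies Luzin--Novikov to the flipped graph $F$, uses $f(h_n(s))=s$ to get injectivity of each $h_n$ and pairwise disjointness of the images $h_n(E_n)$, and invokes Luzin--Suslin to keep $T$ Borel. The check that $T$ meets each class in exactly one point is also sound, since any $y\in T\cap[x]_\E$ forces $f(y)=f(x)$ and the disjointification of the $D_n$ pins down the witnessing index $n$ uniquely. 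You correctly identified that the real obstruction is that $f(X)$ is only analytic, and that Luzin--Novikov's Borel covering by graphs is what gets around the absence of a Borel right inverse on $f(X)$.
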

	
	Given a continuous action $\alpha \colon \G \acts X$ on a Polish space $X$, let $\E_\alpha$ denote the induced \emph{orbit equivalence relation}, i.e., the equivalence relation on $X$ whose classes are precisely the orbits of $\alpha$. Note that $\E_\alpha$ is Borel (in fact, the set $\set{(x, y) \in X \times X \,:\, x \text{ and } y \text{ are $\E_\alpha$-equivalent}}$ is closed). The definition of generic smoothness for actions of $\G$ from~\secsign\ref{subsec:complete} then can be phrased as follows:
	\begin{leftbar}
		\noindent A continuous action $\alpha \colon \G \acts X$ is generically smooth if and only if there exists a comeager $\alpha$\=/invariant Borel subset $X' \subseteq X$ such that the relation $\E_\alpha$ restricted to $X'$ is smooth.
	\end{leftbar}
	
	\begin{lemma}\label{lemma:smooth}
		If $\alpha$ is a generically smooth free continuous action of $\G$ on a nonempty Polish space, then $\S_\BM(\alpha, \N) = \S(\G, \N)$.
	\end{lemma}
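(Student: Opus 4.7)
The plan is to use the transversal characterization of smoothness (Proposition~\ref{prop:smooth}) to reduce any $\G$-equivariant coloring problem to a trivial ``one orbit at a time'' construction. First, I would invoke the definition of generic smoothness to fix a comeager $\alpha$-invariant Borel set $X' \subseteq X$ on which $\E_\alpha$ is smooth; since $\alpha$ is free, every $\E_\alpha$-class is countable, so by Proposition~\ref{prop:smooth} there is a Borel transversal $T \subseteq X'$ meeting each orbit in~$X'$ in exactly one point. Freeness then guarantees that the evaluation map $\G \times T \to X'$, $(\gamma, t) \mapsto \gamma \cdot t$, is a Borel bijection, so each $x \in X'$ has a unique representation $x = \gamma_x \cdot t_x$ with $t_x \in T$ and $\gamma_x \in \G$, depending in a Borel way on $x$.

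Next, given an arbitrary $\subshift \in \S(\G, \N)$, I would fix any single element $\omega \in \subshift$ and define a Borel coloring $f \colon X \to \N$ by
\[
	f(x) \defeq \begin{cases} \omega(\gamma_x) & \text{if } x \in X'; \\ 0 & \text{otherwise.} \end{cases}
\]
A direct unwinding then shows that for every $x = \gamma \cdot t \in X'$ and every $\delta \in \G$, one has $\pi_f(x)(\delta) = f(\delta \gamma \cdot t) = \omega(\delta\gamma) = (\gamma \cdot \omega)(\delta)$, so $\pi_f(x) = \gamma \cdot \omega$, which lies in $\subshift$ by shift-invariance. Thus $\pi_f^{-1}(\subshift) \supseteq X'$ is comeager, witnessing that $\subshift \in \S_\BM(\alpha, \N)$. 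Combined with the inclusion $\S_\BM(\alpha, \N) \subseteq \S(\G, \N)$ already noted after Definition~\ref{defn:BM_col} (which uses $X \neq \0$), this gives the claimed equality.

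There is no real obstacle here; the only point requiring a little care is checking that the selected representatives $\gamma_x$ and $t_x$ vary in a Borel fashion, which follows from the Luzin--Suslin theorem applied to the Borel bijection $\G \times T \to X'$, together with countability of $\G$. Everything else is a bookkeeping verification using the definition of the shift action $(\gamma \cdot \omega)(\delta) = \omega(\delta\gamma)$.
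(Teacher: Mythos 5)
Your proof is correct and is essentially the paper's proof: both pass to a comeager set where $\E_\alpha$ is smooth, extract a Borel transversal via Proposition~\ref{prop:smooth}, use freeness to obtain the unique representative $\gamma_x$, and set $f(x) = \omega(\gamma_x)$ for a fixed $\omega \in \subshift$, checking $\pi_f(x) = \gamma_x \cdot \omega$. The only additional detail you supply is the explicit remark that $x \mapsto \gamma_x$ is Borel, which the paper leaves implicit.
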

	\begin{proof}
		Let $\alpha \colon \G \acts X$ be as in the statement of the lemma. The inclusion $\S_\BM(\alpha, \N) \subseteq \S(\G, \N)$ is clear as $X \neq \0$. To prove the other inclusion, consider any nonempty subshift $\subshift \in \S(\G, \N)$ and let $\omega \in \subshift$ be an arbitrary coloring. After discarding a meager set if necessary, we may assume that~$\E_\alpha$ is smooth; Proposition~\ref{prop:smooth} then gives a Borel transversal $T \subseteq X$ for $\E_\alpha$. Since $\alpha$ is free, for each $x \in X$, there is a unique element $\gamma_x \in \G$ such that $(\gamma_x)^{-1} \cdot x \in T$. Set $f(x) \defeq \omega(\gamma_x)$ for all $x \in X$. Then for all $x \in X$,
		\[
			\pi_f(x) \,=\, \gamma_x \cdot \omega \,\in\, \subshift,
		\]
		i.e., $f$ is a desired Baire measurable $\subshift$-coloring of $\alpha$.
	\end{proof}
	
	The remainder of this section is dedicated to proving the ``hard'' part of Theorem~\ref{theo:complete}: the completeness of the set $\S_\BM(\alpha, \N)$ for generically non-smooth $\alpha$.
	
		Let $\subshift \in \S_0(\G, \N)$ be a subshift. We say that $\subshift$ is \emph{easy} if $\subshift \in \S_\BM(\alpha, \N)$ for every free continuous action $\alpha$ of $\G$ on a Polish space; we say that $\subshift$ is \emph{hard} if $\subshift \in \S_\BM(\alpha, \N)$ only for generically smooth~$\alpha$. 
		
		\begin{lemma}\label{lemma:hard}
			Let $\subshift \in \S_0(\G, \N)$ be a subshift. Suppose that for each $\omega \in \subshift$, there is some $c \in \N$ such that the set $\set{\gamma \in \G \,:\, \omega(\gamma) = c}$ contains precisely one element. Then $\subshift$ is hard.
		\end{lemma}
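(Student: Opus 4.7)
The strategy is to exploit the hypothesis on $\subshift$ to manufacture, for any free continuous action $\alpha \colon \G \acts X$ that admits a Baire measurable $\subshift$-coloring, a Borel transversal for $\E_\alpha$ on an $\alpha$-invariant comeager subset of $X$; by Proposition~\ref{prop:smooth}, this witnesses that $\alpha$ is generically smooth. So fix such an $\alpha$ and a Baire measurable $\subshift$-coloring $f \colon X \to \N$. First I would pass to an $\alpha$-invariant comeager Borel set $B \subseteq X$ on which $f$ is Borel and $\pi_f(x) \in \subshift$ for every $x \in B$; this is routine given that a Baire measurable function is Borel on a comeager set, that $\pi_f^{-1}(\subshift)$ is Baire measurable and comeager, and that any comeager set contains a comeager $\alpha$-invariant subset (the observation preceding Definition~\ref{defn:subshift}).

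For each color $c \in \N$, the set
\[
    A_c \defeq \set{x \in B \,:\, |\set{\gamma \in \G \,:\, f(\gamma \cdot x) = c}| = 1}
\]
is Borel, because $\G$ is countable and each translation $x \mapsto \gamma \cdot x$ is a homeomorphism of $B$. The hypothesis on $\subshift$ guarantees that for every $x \in B$ at least one color appears exactly once in $\pi_f(x)$, i.e., $x$ belongs to some $A_c$. Therefore
\[
    c(x) \defeq \min\set{c \in \N \,:\, x \in A_c}
\]
defines a Borel function $c \colon B \to \N$ (Borelness follows from $c^{-1}(c_0) = A_{c_0} \setminus \bigcup_{c' < c_0} A_{c'}$), and I would then set $T \defeq \set{x \in B \,:\, f(x) = c(x)}$, a Borel subset of $B$.

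The key point is that $T$ is a transversal for the restriction of $\E_\alpha$ to $B$. Since $\pi_f(\gamma \cdot x) = \gamma \cdot \pi_f(x)$ is merely a reindexing of $\pi_f(x)$, the set of colors appearing exactly once in $\pi_f(x)$ depends only on the orbit of $x$, and in particular $c$ is constant on $\E_\alpha$-classes. By construction $c(x)$ is a color attained by $f$ on the orbit of $x$ at exactly one point, which is by definition the unique element of $T \cap (\G \cdot x)$. Since $\alpha$ is free, $\E_\alpha$ restricted to $B$ is a countable Borel equivalence relation, so Proposition~\ref{prop:smooth} converts the Borel transversal $T$ into a smoothness witness, showing that $\alpha$ is generically smooth and hence that $\subshift$ is hard. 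I do not expect a genuine obstacle; the one step demanding any real care is the initial reduction to the Borel setting, which is nevertheless entirely standard.
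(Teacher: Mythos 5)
Your proof is correct and follows essentially the same route as the paper's: the paper defines $c_\omega \defeq \min\set{c \in \N : |\set{\gamma : \omega(\gamma)=c}|=1}$ on $\subshift$ and takes $T \defeq \set{\omega \in \subshift : c_\omega = \omega(\mathbf{1})}$, then pulls back via $\pi_f$, which unwinds to precisely your $c(x)$ and your transversal $\set{x : f(x)=c(x)}$. The only cosmetic difference is that you carry out the construction directly on $X$ instead of on $\subshift$ and then pulling back.
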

		\begin{proof}
			For each $\omega \in \subshift$, let
			\[
			c_\omega \defeq \min \set{c \in \N \,:\, |\set{\gamma \in \G \,:\, \omega(\gamma) = c}| = 1}.
			\]
			Define a Borel set $T \subseteq \subshift$ by
			\[
			T \defeq \set{\omega \in \subshift \,:\, c_\omega = \omega(\mathbf{1})}.
			\]
			Let $\alpha \colon \G \acts X$ be a continuous action of $\alpha$ on a Polish space $X$ and suppose that $f \colon X \to \N$ is a Baire measurable $\subshift$-coloring of $\alpha$. After passing to a comeager subset if necessary, we may assume that the map $f$ is Borel and $X = (\pi_f)^{-1}(\subshift)$. Then $(\pi_f)^{-1}(T)$ is a Borel transversal for $\E_\alpha$.
		\end{proof}
		
		\begin{lemma}\label{lemma:hard_union}
			Let $\subshift_0$, $\subshift_1$, $\ldots \in \S_0(\G, \N)$ be a countable sequence of hard subshifts. If $\subshift \defeq \bigcup_{i = 0}^\infty \subshift_i$ is a subshift, then $\subshift$ is also hard.
		\end{lemma}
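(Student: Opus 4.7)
Let $\alpha \colon \G \acts X$ be a free continuous action on a nonempty Polish space admitting a Baire measurable $\subshift$-coloring $f$; the goal is to deduce that $\alpha$ is generically smooth. Setting $A_i \defeq (\pi_f)^{-1}(\subshift_i)$, each $A_i$ is an $\alpha$-invariant Baire measurable subset of $X$, and the union $\bigcup_i A_i = (\pi_f)^{-1}(\subshift)$ is comeager.

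I would run a maximality argument. Let $V \subseteq X$ be the union of all open $\alpha$-invariant subsets $W$ for which $\alpha \colon \G \acts W$ is generically smooth; evidently $V$ is itself open and $\alpha$-invariant. The first task is to verify that $\alpha \colon \G \acts V$ is generically smooth. Since $V$ is Lindel\"of, it is covered by countably many such $W_n$, each coming with a comeager $\alpha$-invariant Borel $C_n \subseteq W_n$ and a Borel map $f_n \colon W_n \to \R$ that reduces $\E_\alpha$ to equality on $C_n$. Taking $n(x) \defeq \min\set{n : x \in W_n}$ (a Borel and $\alpha$-invariant function on $V$), the map $x \mapsto (n(x), f_{n(x)}(x))$ reduces $\E_\alpha$ to equality on the comeager invariant Borel set $\set{x \in V : x \in C_{n(x)}}$, establishing the generic smoothness of $\alpha \colon \G \acts V$.

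The second task is to prove that $V$ is comeager in $X$; this is where hardness enters. Suppose, toward a contradiction, that $X \setminus V$ is non-meager. Being closed, $X \setminus V$ then has nonempty interior $W_0$, so $\G \cdot W_0$ is a nonempty open $\alpha$-invariant subset of $X$ disjoint from $V$. On the Polish space $\G \cdot W_0$ the action restricts to a free continuous action with Baire measurable $\subshift$-coloring $\rest{f}{\G \cdot W_0}$, and $\bigcup_i A_i$ remains comeager there; hence some $A_i$ is non-meager in $\G \cdot W_0$, and the Baire alternative yields a nonempty open $U \subseteq \G \cdot W_0$ with $U \Vdash A_i$. By $\alpha$-invariance of $A_i$, one upgrades this to $\G \cdot U \Vdash A_i$, so $\rest{f}{\G \cdot U}$ is a Baire measurable $\subshift_i$-coloring of the free continuous action $\alpha \colon \G \acts \G \cdot U$. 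Since $\subshift_i$ is hard, this action is generically smooth; but then $\G \cdot U$ is a nonempty open invariant subset of $X$ disjoint from $V$ satisfying the defining property of $V$, contradicting maximality.

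The delicate step I foresee is the pasting in the first task---combining the generic smoothness witnessed on each $W_n$ into a single Borel reduction on all of $V$---which the indexing by $n(x)$ handles cleanly. Once that is in place, everything else is a routine Baire-category argument powered by the hardness of each $\subshift_i$.
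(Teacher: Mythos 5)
Your argument is correct, but it takes a genuinely different route from the paper's. The paper proceeds more directly: setting $X_i \defeq (\pi_f)^{-1}(\subshift_i)$, it discards a meager invariant set so that $X = \bigcup_i X_i$ and (invoking hardness of each $\subshift_i$) passes to a further comeager subset on which each $\E_\alpha\restriction X_i$ is smooth with Borel transversal $T_i$, and then pastes these disjointly via $S_0 \defeq T_0$, $S_{i+1} \defeq T_{i+1} \setminus \bigcup_{j\leq i} X_j$ to get a single Borel transversal for $\E_\alpha$. Your proof instead runs an exhaustion argument: you take the maximal open invariant $V$ on which $\alpha$ is generically smooth, glue witnesses across a countable (Lindel\"of) subcover via the selector $n(x)$, and show $V$ is comeager by using the Baire alternative and the hardness of one $\subshift_i$ on the open invariant set $\G\cdot W_0$ sitting inside $X\setminus V$. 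A real advantage of your version is that hardness is only ever applied to restrictions of $\alpha$ to \emph{open} invariant subsets, which are honestly Polish free continuous actions; the paper's phrase ``passing to an even further comeager subset, we may assume $\E_\alpha$ restricted to each $X_i$ is smooth'' quietly assumes one can treat the Borel invariant sets $X_i$ as Polish actions, which takes a small extra argument (e.g., replacing $X_i$ mod meager by an open invariant set, as you effectively do). On the other hand, once that replacement is granted, the paper's transversal-pasting is shorter. One cosmetic remark: you should say explicitly at the end that comeagerness of $V$ together with generic smoothness of $\alpha\restriction V$ gives generic smoothness of $\alpha$ on $X$; it is immediate (a comeager invariant Borel $C\subseteq V$ with $\E_\alpha\restriction C$ smooth is also comeager in $X$), but it is the conclusion being asked for.
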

		\begin{proof}
			Let $\alpha \colon \G \acts X$ be a continuous action of $\alpha$ on a Polish space $X$ and suppose that $f \colon X \to \N$ is a Baire measurable $\subshift$-coloring of $\alpha$. Set $X_i \defeq (\pi_f)^{-1}(\subshift_i)$. After discarding a meager subset if necessary, we may assume that the map $f$ is Borel and $X = (\pi_f)^{-1}(\subshift) = \bigcup_{i = 0}^\infty X_i$. Passing to an even further comeager subset, we may assume that the relation $\E_\alpha$ restricted to each $X_i$ is smooth. Using Proposition~\ref{prop:smooth}, we obtain Borel transversals $T_i \subseteq X_i$ for the restricted relations. Let
			\[
				\begin{array}{ll}
				S_0 &\defeq\, T_0;\\
				S_{i+1} &\defeq\, T_{i+1} \setminus \bigcup_{j = 0}^i X_j \;\;\;\text{for all } i \in \N,
				\end{array}
			\]
			and set $S \defeq \bigcup_{i=0}^\infty S_i$. Then $S$ a Borel transversal for $\E_\alpha$.
		\end{proof}
	
	\subsection{Combinatorial lemmas}\label{subsec:com_lem}
	
	In this subsection we describe the main combinatorial construction behind our proof of Theorem~\ref{theo:complete}.
	
	\begin{lemma}\label{lemma:infty}
		Let $(d_0, d_1, \ldots) \in \Rpos^\N$ be a sequence such that a ball of radius $d_0$ in $\G$ contains at least~$2$ elements, and for each $c \in \N$, a ball of radius $d_{c+1}$ in $\G$ contains two disjoint balls of radius $d_c$. Suppose that $\omega \colon \G \to \N$ is a coloring such that for all $c \in \N$, 
		\[
			\inf \set{\dist(\gamma, \delta) \,:\, \gamma, \delta \in \G, \, \gamma \neq \delta, \, \omega(\gamma) = \omega(\delta) = c} \, > \, 2d_c.
		\]
		Then $\omega$ uses infinitely many colors, i.e., the set $\set{\omega(\gamma) \,:\, \gamma \in \G}$ is infinite.
	\end{lemma}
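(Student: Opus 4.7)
The plan is to argue by contradiction: suppose $\omega$ uses only finitely many colors, so there is some $C \in \N$ with $\omega(\gamma) \leq C$ for all $\gamma \in \G$. I will then derive a contradiction by proving the following strengthening by induction on $c \in \N$:

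\smallskip
\emph{Claim.} For every $c \in \N$, every closed ball of radius $d_c$ in $\G$ contains at least one element $\gamma$ with $\omega(\gamma) > c$.

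\smallskip
Applied with $c = C$, this immediately contradicts the assumption that $\omega(\gamma) \leq C$ everywhere, completing the proof.

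For the base case $c = 0$, any ball $\Ball{g}{d_0}$ contains at least two distinct elements $\gamma_1, \gamma_2$ by hypothesis; since $\dist$ is a metric, the triangle inequality gives $\dist(\gamma_1, \gamma_2) \leq 2d_0$, whereas the separation hypothesis for color $0$ would force $\dist(\gamma_1, \gamma_2) > 2d_0$ if both were colored $0$. Hence at least one of $\gamma_1, \gamma_2$ has color $> 0$. For the inductive step, given a ball $\Ball{g}{d_{c+1}}$, the hypothesis on $d_{c+1}$ supplies two disjoint balls $B_1, B_2 \subseteq \Ball{g}{d_{c+1}}$, each of radius $d_c$. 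The inductive hypothesis yields $\gamma_i \in B_i$ with $\omega(\gamma_i) > c$, i.e.\ $\omega(\gamma_i) \geq c+1$. If either $\omega(\gamma_i) > c+1$ we are done; otherwise $\omega(\gamma_1) = \omega(\gamma_2) = c+1$, so the separation hypothesis for color $c+1$ forces $\dist(\gamma_1, \gamma_2) > 2d_{c+1}$, contradicting the fact that $\gamma_1, \gamma_2 \in \Ball{g}{d_{c+1}}$ and hence $\dist(\gamma_1, \gamma_2) \leq 2 d_{c+1}$ (note that $\gamma_1 \neq \gamma_2$ since $B_1 \cap B_2 = \0$).

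There is no real obstacle here; the only point to be careful about is matching the strict inequality $> 2 d_c$ in the separation hypothesis against the weak inequality $\leq 2 d_c$ coming from the triangle inequality inside a ball of radius $d_c$, which is exactly what makes each inductive step go through.
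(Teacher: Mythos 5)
Your proof is correct and takes essentially the same approach as the paper: both establish by induction on $c$ that every ball of radius $d_c$ contains an element of color $> c$, using the separation hypothesis and the fact that two points in a ball of radius $d_c$ lie within distance $2d_c$ of each other. The only cosmetic difference is that you wrap the inductive claim in an explicit proof by contradiction, whereas the paper reads off the conclusion directly.
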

	\begin{proof}
		We use induction on $c$ to show that any ball of radius $d_c$ in $\G$ contains an element $\gamma$ with $\omega(\gamma) > c$. For $c = 0$, the assertion follows from the fact that each ball of radius $d_0$ contains at least~$2$ elements, and it is impossible for both of them to have color $0$, since the distance between any two distinct elements $\gamma$, $\delta$ with $\omega(\gamma) = \omega(\delta) = 0$ is strictly greater than $2d_0$. Now assume that the assertion has been verified for some $c$ and consider any ball of radius $d_{c+1}$. It contains two disjoint balls of radius $d_c$, so it must, by the inductive hypothesis, contain two distinct elements $\gamma$, $\delta$ with $\omega(\gamma)$, $\omega(\delta) > c$. As $\dist(\gamma, \delta) \leq 2d_{c+1}$, it is impossible to have $\omega(\gamma) = \omega(\delta) = c+1$, so at least one of $\omega(\gamma)$, $\omega(\delta)$ exceeds $c+1$. 
	\end{proof}
	
	\begin{remk}
		Beside its application in the proof of Theorem~\ref{theo:complete}, Lemma~\ref{lemma:infty} will be used once more in the proof of Corollary~\ref{corl:not_universal}.
	\end{remk}
	
	Let $\alpha \colon \G \acts X$ be a free action of $\G$. For $x$, $y \in X$, write
	\[
		\dist(x, y) \defeq \begin{cases}
			\dist(\mathbf{1}, \gamma) &\text{if } \gamma \in \G \text{ is such that } \gamma \cdot x = y;\\
			\infty &\text{if } x \text{ and } y \text{ are in different $\alpha$-orbits}.
		\end{cases}
	\]
	Due to the right-invariance of the metric $\dist$, for all $x \in X$ and $\gamma$, $\delta \in \G$, we have
	\[
		\dist(\gamma \cdot x, \delta \cdot x) \,=\, \dist(\gamma, \delta).
	\]
	The next lemma is essentially a restatement of \cite[Lemma~3.1]{MU16}; we include its proof here for completeness.
	
	\begin{lemma}[{ess. Marks--Unger~\cite[Lemma~3.1]{MU16}}]\label{lemma:MU}
		Let $\alpha \colon \G \acts X$ be a free continuous action of~$\G$ on a nonempty Polish space $X$. Then for every sequence $(d_0, d_1, \ldots) \in \Rpos^\N$, there exists a Baire measurable coloring $f \colon X \to \N$ such that for all $c \in \N$,
		\[
			\inf \set{\dist(x, y) \,:\, x, y \in X, \, x \neq y, \, f(x) = f(y) = c} \, > \, d_c.
		\]
	\end{lemma}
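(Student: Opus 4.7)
My plan is to construct $f$ via an iterative marker-lemma argument. At each stage $c$ I choose a Baire-measurable $d_c$-separated subset $A_c$ of the still-uncolored points and eventually set $f(x) \defeq c$ on $A_c$. The main obstacle is ensuring that the iteration exhausts all of $X$ modulo a meager set, which I would handle via the Baire alternative together with Lemma~\ref{lemma:infty}.

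The construction of the $A_c$'s goes as follows. For each $r \geq 0$, the graph $G_r \defeq \set{(x,y) \in X \times X \,:\, 0 < \dist(x,y) \leq r}$ on $X$ is Borel with finite maximum degree, bounded by $|\Ball{\mathbf{1}}{r}| - 1$; by the Kechris--Solecki--Todorcevic technique (\cite[Proposition~4.5]{KST99}), any Borel subset of $X$ admits a Borel proper coloring with finitely many colors, from which a Borel maximal $r$-separated subset can be extracted greedily. Setting $X_0 \defeq X$ and, inductively, letting $A_c \subseteq X_c$ be a Borel maximal $d_c$-separated subset with $X_{c+1} \defeq X_c \setminus A_c$, maximality guarantees that every point of $X_{c+1}$ lies within distance $d_c$ of some point of $A_c$.

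The crux of the proof is to show that $Y \defeq \bigcap_c X_c$ is meager; once this is done, $f(x) \defeq c$ on $A_c$ (extended to $Y$ by rerunning the construction on $Y$ using a disjoint reserve of color labels, e.g., assigning the $A_c$'s to even integers and the $Y$-iteration labels to odd integers) yields the required coloring while preserving the strict separation inequality on every color class. I would prove meagerness of $Y$ by contradiction: if $Y$ were non-meager, by the Baire alternative some nonempty open $U \subseteq X$ would force $Y$; for generic $x \in U$, the maximality of each $A_c$ would supply pairwise-distinct witnesses $\gamma_c \in \Ball{\mathbf{1}}{d_c}$ with $\gamma_c \cdot x \in A_c$. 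After WLOG strengthening $(d_c)$ to satisfy the doubling hypothesis of Lemma~\ref{lemma:infty} (a valid reduction since a coloring with stronger separation also satisfies the original condition), the induced orbit coloring $\omega_x(\gamma) \defeq c$ for $\gamma \cdot x \in A_c$ has color $c$ both $d_c$-separated and appearing in $\Ball{\mathbf{1}}{d_c}$; a translation/invariance argument that combines the forcing condition $U \Vdash Y$ with Lemma~\ref{lemma:infty} should produce the contradiction. The most delicate step will be nailing down this final combinatorial contradiction, which is where the Baire-category interaction of $U$ with its $\G$-translates must be used to force $\omega_x$ beyond what Lemma~\ref{lemma:infty} permits.
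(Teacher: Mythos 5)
Your approach differs from the paper's and, as you acknowledge yourself, has a gap at the crucial step.

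The paper proves the lemma by a clean Kuratowski--Ulam argument rather than an iterated marker construction. For each $c \in \N$ one fixes a Borel proper $\N$-coloring $\eta_c$ of the graph $\mathcal{G}_c$ on $X$ with edges at distances $\leq d_c$ (via \cite[Proposition~4.5]{KST99}). Then, for a \emph{parameter} $s = (s_0, s_1, \ldots) \in \N^\N$, one defines $f_s(x)$ to be the least $c$ with $\eta_c(x) = s_c$, if such exists. Any single $f_s$ automatically satisfies the required separation wherever it is defined, since $f_s(x) = f_s(y) = c$ forces $\eta_c(x) = \eta_c(y)$, hence $x \not\sim y$ in $\mathcal{G}_c$. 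For each fixed $x$ the set of $s$ with $f_s(x)$ defined is open and dense, hence comeager; Kuratowski--Ulam then produces a single $s$ for which $f_s$ is defined on a comeager set. The point is that the lemma is established for a \emph{generic} member of a family of constructions, sidestepping the need to exhibit one particular construction that works.

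Your iterated marker scheme builds one deterministic sequence $A_0, A_1, \ldots$ and then needs to show the residue $Y = \bigcap_c X_c$ is meager. This is the step you cannot close, and the route you sketch does not work as stated. Maximality of $A_c$ gives, for $x \in Y$, one witness $\gamma_c \in \Ball{\mathbf{1}}{d_c}\setminus\set{\mathbf{1}}$ with $\gamma_c \cdot x \in A_c$; but the resulting orbit coloring $\omega_x$ is only \emph{partial} (undefined on $\set{\gamma : \gamma \cdot x \in Y}$, which can be large), whereas Lemma~\ref{lemma:infty} requires a \emph{total} coloring of $\G$ and separation $> 2d_c$ rather than $> d_c$. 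More fundamentally, even if one arranges to invoke Lemma~\ref{lemma:infty}, its conclusion (``$\omega_x$ uses infinitely many colors'') is not in tension with $Y$ being nonmeager; you do not say what it would contradict, and I do not see a contradiction. Indeed, there is no reason a fixed Borel choice of maximal $d_c$-separated sets must exhaust $X$ modulo meager: the greedy/KST-based choice of $A_c$ at each stage is one out of many, and the success of the construction depends on those choices. What saves the paper's proof is precisely the averaging over all choices via Kuratowski--Ulam.

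Two smaller issues. First, your proposal to handle the residue $Y$ by ``rerunning the construction on $Y$'' does not terminate; the residue of the rerun would need the same treatment, ad infinitum. The standard fix (and what the paper's unstated ``It suffices'' reduction rests on) is a Luzin--Novikov partial-transversal argument: write the Borel meager residue as a countable union of Borel partial transversals for $\E_\alpha$ and give each transversal a fresh color, so that same-colored points in the residue lie in distinct orbits and are at distance $\infty$. Second, the WLOG strengthening of $(d_c)$ to the doubling hypothesis is fine, but it pushes you toward needing separation $> 2d_c$ if you ever hope to cite Lemma~\ref{lemma:infty}, which your $A_c$'s do not provide.
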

	\begin{proof}
		It suffices to show that there exists a partial Baire measurable map $f \colon X \rightharpoonup \N$ defined on a comeager subset of $X$ and such that for all $c \in \N$,
		\begin{equation}\label{eq:far}
			\inf \set{\dist(x, y) \,:\, x, y \in \dom(f), \, x \neq y, \, f(x) = f(y) = c} \, > \, d_c.
		\end{equation}
		
		For $c \in \N$, let $\mathcal{G}_c$ denote the graph with vertex set $X$ and edge set
		\[
			\set{(x, y) \in X \times X \,:\, x \neq y \text{ and } \dist(x,y) \leq d_c}.
		\]
		The graph $\mathcal{G}_c$ is Borel (closed, in fact), and the neighborhood of every vertex in $\mathcal{G}_c$ is finite, so $\mathcal{G}_c$ admits a Borel proper $\N$-coloring~\cite[Proposition~4.5]{KST99}, i.e., a Borel function $\eta_c \colon X \to \N$ such that $\eta_c(x) \neq \eta_c(y)$ whenever $x \sim y$ in $\mathcal{G}_c$. For each $c \in \N$, we fix one such $\eta_c$.
		
		Given a sequence $s = (s_0, s_1, \ldots) \in \N^\N$, define a partial function $f_s \colon X \rightharpoonup \N$ as follows:
		\[
			f_s(x) \defeq \text{the smallest } c \in \N \text{ such that } \eta_c(x) = s_c, \text{ if such exists}.
		\]
		Note that for any $s \in \N^\N$, \eqref{eq:far} is satisfied with $f = f_s$. Indeed, if $x$, $y \in X$ are distinct and such that $f_s(x) = f_s(y) = c$, then, by definition, $\eta_c(x) = \eta_c(y) = s_c$, so $x \not \sim y$ in $\mathcal{G}_c$, i.e., $\dist(x, y) > d_c$. As the map~$f_s$ is Borel, it remains to prove that for some $s \in \N^\N$, the set
		\[
			\set{x \in X \,:\, f_s(x) \text{ is defined}}
		\]
		is comeager. Due to the Kuratowski--Ulam theorem~\cite[Theorem~8.41]{Kec95}, it suffices to show that for all $x \in X$, the set
		\[
			\set{s \in \N^\N \,:\, f_s(x) \text{ is defined}} \,=\, \set{s \in \N^\N \,:\, s_c = \eta_c(x) \text{ for some } c \in \N}
		\]
		is comeager in $\N^\N$, which is indeed the case as it is open and dense.
	\end{proof}
	
	Now we combine Lemmas~\ref{lemma:infty} and~\ref{lemma:MU} to prove the main technical result of this subsection:
	
	\begin{lemma}\label{lemma:cont}
		There exist a nonempty compact metrizable space $H$ with no isolated points, a dense countable subset $H_0 \subset H$, and a continuous map $H \to \S_0(\G, \N) \colon h \mapsto \subshift_h$ such that
		\begin{itemize}
			\item[--] for all $h \in H_0$, the subshift $\subshift_h$ is hard; and
			\item[--] for all $h \in H \setminus H_0$, the subshift $\subshift_h$ is easy.
		\end{itemize}
	\end{lemma}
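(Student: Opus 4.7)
The plan is to parameterize the subshifts by $H = 2^{\N}$, the Cantor space, with $H_0 \subset H$ the dense countable subset of eventually\-/zero sequences; this $H$ is compact metrizable and has no isolated points. As a ``base,'' I fix a sequence $(d_c)_{c \in \N}$ of positive reals satisfying the doubling hypothesis of Lemma~\ref{lemma:infty} and let $\subshift_{\mathrm{easy}}$ consist of all $\omega \in \N^{\G}$ with $\dist(\gamma, \delta) > 2 d_c$ whenever $\omega(\gamma) = \omega(\delta) = c$ and $\gamma \neq \delta$. By Lemma~\ref{lemma:MU}, $\subshift_{\mathrm{easy}}$ is easy, and by Lemma~\ref{lemma:infty}, each of its elements uses infinitely many colors.

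For each $h \in H_0$, I would construct $\subshift_h \subseteq \subshift_{\mathrm{easy}}$ by adjoining a ``rigidifying'' finite\-/support template depending on $h$ that forces some color to be used exactly once in every element, so that Lemma~\ref{lemma:hard} certifies hardness. Since the condition ``some color used exactly once'' is not closed in the product topology on $\N^{\G}$, a naive orbit\-/closure construction breaks down, and the hard pieces must instead be assembled as a \emph{closed} countable union of smaller hard subshifts, with hardness of the whole deduced from Lemma~\ref{lemma:hard_union}. For $h \in H \setminus H_0$, the parameterization is arranged so that $\subshift_h$ contains $\subshift_{\mathrm{easy}}$ (or at least is reducible to it), and hence is easy.

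Continuity of $h \mapsto \subshift_h$ will be verified through the explicit basis for the Polish topology on $\S_0(\G, \N)$ recalled in Section~\ref{sec:ideals}: it suffices to show that for every $\phi \in \finfun{\G}{\N}$, the set $\set{h \in H : \phi \in \FPC{\subshift_h}}$ is clopen in $H$. This should be transparent from the fact that each rigidifying template constrains only finitely many positions in $\G$, so membership of $\phi$ in $\FPC{\subshift_h}$ depends on only finitely many coordinates of $h$.

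The main obstacle is ensuring that the hard pieces indexed by $h \in H_0$ are genuine closed subshifts whose every element retains an ``exactly\-/once'' color, while simultaneously arranging that as $h$ varies inside $H$ the family converges in $\S_0(\G, \N)$, with limits at non\-/$H_0$ points becoming loose enough to be easy. Reconciling the non\-/closedness of ``exactly once'' with Lemma~\ref{lemma:hard_union}, and simultaneously having the rigidifying templates ``dissolve'' precisely at the non\-/$H_0$ points so that $\subshift_{\mathrm{easy}}$ is recovered in the limit, is the technical heart of the argument; this is where the choice of $H_0$ as the eventually\-/zero sequences becomes essential, since along a sequence $h_n \to h \in H \setminus H_0$ each individual template eventually falls outside the ``active'' range of the parameter and the $\subshift_{h_n}$ grow to include $\subshift_{\mathrm{easy}}$.
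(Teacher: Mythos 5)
Your proposal has a genuine structural contradiction. You want $\subshift_h \subseteq \subshift_{\mathrm{easy}}$ for $h \in H_0$ (the hard pieces) and $\subshift_h \supseteq \subshift_{\mathrm{easy}}$ for $h \in H \setminus H_0$ (the easy pieces). Since $H$ has no isolated points and $H_0$ is countable, $H \setminus H_0$ is dense, so every $h \in H_0$ is a limit of some sequence $h_n \in H \setminus H_0$. Continuity of $h \mapsto \subshift_h$ in the topology recalled in Section~\ref{sec:ideals} means precisely that $\FPC{\subshift_{h_n}} \to \FPC{\subshift_h}$ pointwise on $\finfun{\G}{\N}$. But then every $\phi \in \FPC{\subshift_{\mathrm{easy}}}$ lies in each $\FPC{\subshift_{h_n}}$, and hence must lie in $\FPC{\subshift_h}$; together with $\subshift_h \subseteq \subshift_{\mathrm{easy}}$ this forces $\FPC{\subshift_h} = \FPC{\subshift_{\mathrm{easy}}}$, i.e., $\subshift_h = \subshift_{\mathrm{easy}}$ by Proposition~\ref{prop:finite}, which is easy, not hard. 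So the proposed containment scheme cannot be made continuous. Independently of this, the ``rigidifying finite-support template'' that is supposed to carry the hardness is never defined, and you yourself flag this as the ``technical heart'' without filling it in, so even setting aside the continuity obstruction, the proof is incomplete at the decisive point.

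The paper avoids all of this by putting every $\subshift_h$ \emph{inside} a single base subshift and by replacing the non-closed condition ``some color is used exactly once'' with the closed condition ``color $c$ is used at most once.'' Concretely, $H$ is the space of nondecreasing sequences in $(\N \cup \{\infty\})^\N$, $H_0$ is those sequences that hit $\infty$ (a countable dense set precisely because nondecreasing sequences that reach $\infty$ are determined by finitely much data), and $\subshift_h$ consists of colorings $\omega$ satisfying, for each $c$, the separation bound $\inf\{\dist(\gamma,\delta): \omega(\gamma)=\omega(\delta)=c,\ \gamma\neq\delta\} \geq \max\{2d_c+1, h_c\}$. For $h \in H \setminus H_0$ all bounds are finite and Lemma~\ref{lemma:MU} gives easiness directly; for $h \in H_0$, the bound at the $\infty$-coordinates says ``used at most once'' (a closed constraint), while Lemma~\ref{lemma:infty} guarantees infinitely many colors are actually used, so some $\infty$-coordinate color is used \emph{exactly} once, and Lemma~\ref{lemma:hard} applies. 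Note that Lemma~\ref{lemma:hard_union} plays no role at this stage of the argument; it is needed later, in the proof of Theorem~\ref{theo:complete}, to show hardness of $\bigcup_{h \in C}\subshift_h$ for compact $C \subseteq H_0$.
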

	\begin{proof}
		Let $\N \cup \set{\infty}$ be the compactification of $\N$ obtained by adding the point $\infty$ so that the neighborhood filter of $\infty$ is generated by the sets
		$
		\set{n \in \N \cup \set{\infty} \,:\, n \geq m}
		$
		with $m$ ranging over $\N$. 
		Let~$H$ denote the set of all \emph{nondecreasing} sequences in $(\N \cup \set{\infty})^\N$, which we write as $h = (h_0, h_1, \ldots)$. Being a closed subset of a compact metrizable space, $H$ itself is compact and metrizable, and it is easy to see that~$H$ contains no isolated points. Let
		\[
			H_0 \defeq \set{h \in H \,:\, h_c = \infty \text{ for some } c \in \N}.
		\]
		Evidently, $H_0$ is a dense countable subset of $H$.
		
		Fix a sequence $(d_0, d_1, \ldots) \in \Rpos^\N$ such that a ball of radius $d_0$ in $\G$ contains at least~$2$ elements, and for each $c \in \N$, a ball of radius $d_{c+1}$ in $\G$ contains two disjoint balls of radius $d_c$ (such a sequence exists since $\G$ is infinite, while every ball of finite radius in $\G$ is finite). For $h \in H$, let $\subshift_h$ denote the set of all colorings $\omega \colon \G \to \N$ such that for all $c \in \N$, 
		\begin{equation*}
		\inf \set{\dist(\gamma, \delta) \,:\, \gamma, \delta \in \G, \, \gamma \neq \delta, \, \omega(\gamma) = \omega(\delta) = c} \, \geq \, \max\set{2d_c+1, h_c}.
		\end{equation*}
		The set $\subshift_h$ is a subshift; furthermore, the map $h \mapsto \subshift_h$ is continuous, since determining whether given $\phi \in \finfun{\G}{\N}$ belongs to $\FPC{\subshift_h}$ only involves checking bounds on $h_c$ for finitely many colors $c \in \N$.
		
		If $h \in H \setminus H_0$, then $\subshift_h$ is easy by Lemma~\ref{lemma:MU}. Now suppose $h \in H_0$ and consider any $\omega \in \subshift_h$. By Lemma~\ref{lemma:infty}, the set $\set{\omega(\gamma) \,:\, \gamma \in \G}$ is infinite. As $h \in H_0$, all but finitely many entries in $h$ are equal to $\infty$; therefore, there is some $c \in \N$ such that $h_c = \infty$ and $\omega(\gamma) = c$ for some $\gamma \in \G$. Since $\omega \in \subshift_h$, if $h_c = \infty$, then there is at most a single element $\gamma \in \G$ with $\omega(\gamma) = c$. Therefore, $\subshift_h$ is hard by Lemma~\ref{lemma:hard}.
	\end{proof}
	
	\subsection{The space of compact sets and a final reduction}
	
	The last step in our argument is inspired by the dichotomy theorem for co-analytic $\sigma$-ideals of compact sets due to Kechris, Louveau, and Woodin~\cite[Theorem~33.3]{Kec95}, which asserts that such a $\sigma$-ideal is either $G_\delta$, or else, complete co-analytic. The Kechris--Louveau--Woodin dichotomy theorem is proved using a result of Hurewicz (see Theorem~\ref{theo:Hurewicz} below), which we will utilize in much the same way in our proof of Theorem~\ref{theo:complete}.
	
	Before stating Hurewicz's theorem, we need to introduce some notation and terminology. Let~$X$ be a Polish space. We use $\K(X)$ to denote the set of all compact subsets of $X$. The set $\K(X)$ is equipped with the \emph{Vietoris topology}, which is generated by the open sets of the form
	\[
		\set{C \in \K(X) \,:\,C \cap U \neq \0} \;\;\;\text{and}\;\;\;\set{C \in \K(X) \,:\,C \subseteq U},
	\]
	where $U$ is ranging over the open subsets of $X$. The space $\K(X)$ is itself Polish~\cite[Theorem~4.25]{Kec95}. For more background on the Vietoris topology and related concepts, see \cite[Section~4.F]{Kec95} and \cite[Section~3.D]{Tse16}.
	
	\begin{theo}[{Hurewicz~\cite[Exercise~27.4(ii)]{Kec95}}]\label{theo:Hurewicz}
		Let $X$ be a Polish space and let $A \subseteq X$ be a subset which is $G_\delta$ but not $F_\sigma$. Then the set $\set{C \in \K(X) \,:\, C \cap A \neq \0}$	is complete analytic.
	\end{theo}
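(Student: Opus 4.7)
The plan is to first verify analyticity of $\mathcal{B} \defeq \set{C \in \K(X) \,:\, C \cap A \neq \0}$ and then prove hardness by producing a continuous reduction from a known complete analytic set.

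For analyticity, I would observe that $\mathcal{B}$ is the projection to $\K(X)$ of
\[
	\set{(C, x) \in \K(X) \times X \,:\, x \in C \text{ and } x \in A}.
\]
The condition $x \in C$ defines a closed subset of $\K(X) \times X$ (a standard property of the Vietoris topology), and $A$ is Borel in $X$; hence $\mathcal{B}$ is the projection of a Borel set, and therefore analytic.

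For hardness, the crux is Hurewicz's theorem in the following classical form: since $A$ is $G_\delta$ but not $F_\sigma$ in $X$, there exists a Cantor set $K \subseteq X$ such that $Q \defeq K \setminus A$ is countable and dense in $K$. By the topological characterization of $\Q$ as the unique countable metrizable space without isolated points, $Q$ is then homeomorphic to $\Q$. The second ingredient is the classical fact (one of the standard complete coanalytic examples; see \cite[\secsign{}27]{Kec95}) that
\[
	\K(Q) \,=\, \set{C \in \K(K) \,:\, C \subseteq Q}
\]
is complete coanalytic as a subset of $\K(K)$. Taking complements, $\set{C \in \K(K) \,:\, C \cap A \neq \0} = \set{C \in \K(K) \,:\, C \not\subseteq Q}$ is complete analytic in $\K(K)$.

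To finish, since $K$ is compact and hence closed in $X$, the inclusion $\K(K) \hookrightarrow \K(X)$ is a topological embedding, and under it $\mathcal{B} \cap \K(K)$ is precisely $\set{C \in \K(K) \,:\, C \cap A \neq \0}$. This yields a continuous reduction of a complete analytic set to $\mathcal{B}$, so $\mathcal{B}$ itself is complete analytic. The main obstacle I foresee is locating the precise Hurewicz dichotomy for $G_\delta$ versus $F_\sigma$ (rather than the more commonly cited $K_\sigma$ version), which produces a Cantor set $K \subseteq X$ with $K \setminus A$ countable dense in $K$; once this $K$ is in hand, the rest consists of routine manipulations in Vietoris spaces.
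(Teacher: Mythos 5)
Your argument is correct, and it is essentially the canonical derivation of the exercise that the paper cites (the paper gives no proof of this theorem, attributing it to Kechris, Exercise~27.4(ii)). Two tiny points worth making explicit: first, $Q = K \setminus A$ has no isolated points because a point isolated in a dense subset of a perfect space would be isolated in the ambient space; second, the Hurewicz dichotomy you invoke is the $F_\sigma$/$G_\delta$ version (apply it to the $\Pi^1_1$, non-$G_\delta$ set $X \setminus A$ to obtain the Cantor set $K$ with $K \setminus A$ countable dense), and the orientation you chose is forced, since $K \cap A$ is a $G_\delta$ in $K$ and a countable dense $G_\delta$ cannot exist in a perfect compact space by Baire category.
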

	
	\begin{lemma}\label{lemma:union}
		If $C \subseteq \S_0(\G, \N)$ is a compact set, then $\bigcup_{\subshift \in C} \subshift$ is a subshift. Furthermore, the map
		\[
			\K(\S_0(\G, \N)) \to \S_0(\G, \N) \colon C \mapsto \textstyle\bigcup\nolimits_{\subshift \in C} \subshift
		\]
		is continuous.
	\end{lemma}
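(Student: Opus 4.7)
The plan is to work via the correspondence between subshifts and extendable $\G$-ideals established in Section~\ref{sec:ideals}, which converts the Vietoris-continuity problem into a routine check on clopen subbasic sets.

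Given a compact $C \subseteq \S_0(\G, \N)$, define $\P_C \defeq \bigcup_{\subshift \in C} \FPC{\subshift} \subseteq \finfun{\G}{\N}$. This $\P_C$ is plainly a $\G$-ideal, and it is extendable: if $\phi \in \P_C$, say $\phi \in \FPC{\subshift}$ for some $\subshift \in C$, then for any $\gamma \notin \dom(\phi)$, the extendability of $\FPC{\subshift}$ provides $c \in \N$ with $\phi \cup \{(\gamma, c)\} \in \FPC{\subshift} \subseteq \P_C$. By Proposition~\ref{prop:finite}, $\subshift_C \defeq \Sol(\P_C)$ is a subshift with $\FPC{\subshift_C} = \P_C$. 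I then claim $\subshift_C = \bigcup_{\subshift \in C} \subshift$. The inclusion $\supseteq$ is clear. For $\subseteq$, let $\omega \in \subshift_C$; for each finite $S \subseteq \G$, the set $C_S \defeq \{\subshift \in C : \rest{\omega}{S} \in \FPC{\subshift}\}$ is a \emph{clopen} subset of $\S_0(\G, \N)$ intersected with $C$, hence closed in $C$, and nonempty because $\rest{\omega}{S} \in \P_C$. Since $\FPC{\subshift}$ is closed under restrictions, $C_{S \cup T} \subseteq C_S \cap C_T$, so the family $\{C_S\}_{S \in \finset{\G}}$ has the finite intersection property. Compactness of $C$ then yields $\subshift^\ast \in \bigcap_S C_S$, and by construction $\omega \in \Sol(\FPC{\subshift^\ast}) = \subshift^\ast \in C$. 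This proves the first assertion.

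For the continuity statement, recall that the topology on $\S_0(\G, \N)$ is generated by the clopen subbasis $\{U_\phi\}_{\phi \in \finfun{\G}{\N}}$ and its complements, where $U_\phi \defeq \{\subshift : \phi \in \FPC{\subshift}\}$. Writing $\Phi(C) \defeq \subshift_C$, we compute
\[
    \Phi^{-1}(U_\phi) \,=\, \{C : \phi \in \FPC{\subshift_C}\} \,=\, \{C : \phi \in \P_C\} \,=\, \{C : C \cap U_\phi \neq \0\},
\]
which is Vietoris-open, while
\[
    \Phi^{-1}(U_\phi^\mathsf{c}) \,=\, \{C : \forall\, \subshift \in C,\ \phi \notin \FPC{\subshift}\} \,=\, \{C : C \subseteq U_\phi^\mathsf{c}\},
\]
also Vietoris-open since $U_\phi^\mathsf{c}$ is open. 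Continuity follows.

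The only nontrivial step is the compactness argument identifying $\Sol(\P_C)$ with $\bigcup_{\subshift \in C}\subshift$; everything else is formal manipulation of the clopen subbasis. I expect this to be the main obstacle, though it is essentially immediate once one notices that the conditions $\rest{\omega}{S} \in \FPC{\subshift}$ carve out \emph{clopen} slices of $C$ with the finite intersection property.
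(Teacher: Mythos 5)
Your proof is correct, and it takes a genuinely different route from the paper's. The paper argues directly at the level of $\N^\G$: given $\omega$ in the closure of $\bigcup_{\subshift\in C}\subshift$, it picks $\omega_i\in\subshift_i$ with $\omega_i\to\omega$, uses compactness of $C$ to pass to a subsequence with $\subshift_i\to\subshift_\infty\in C$, and then observes that each finite restriction $\rest{\omega}{S}$ eventually coincides with $\rest{\omega_i}{S}\in\FPC{\subshift_i}$, so $\rest{\omega}{S}\in\FPC{\subshift_\infty}$ and therefore $\omega\in\subshift_\infty$; the continuity claim is then dismissed as immediate. You instead route the argument through the $\G$\=/ideal correspondence of Section~\ref{sec:ideals}: you form $\P_C=\bigcup_{\subshift\in C}\FPC{\subshift}$, check it is an extendable $\G$-ideal, and identify $\bigcup_{\subshift\in C}\subshift$ with $\Sol(\P_C)$ via a finite-intersection-property argument on the relatively clopen sets $C_S=C\cap U_{\rest{\omega}{S}}$. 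The two compactness uses are essentially the same phenomenon seen in sequential versus FIP form, but your framing has a concrete payoff: it produces the identity $\FPC{\subshift_C}=\P_C$ explicitly, from which the continuity statement falls out cleanly by computing preimages of the subbasic sets $U_\phi$ and their complements, whereas the paper leaves that verification to the reader. So your argument is, if anything, more complete on the second assertion, at the small cost of invoking Proposition~\ref{prop:finite} (which the paper does not use here).
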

	\begin{proof}
		Let $C \in \K(\S_0(\G, \N))$. The set $\bigcup_{\subshift \in C} \subshift$ is clearly shift-invariant. Consider any $\omega \in \overline{\bigcup_{\subshift \in C} \subshift}$ (the bar denotes topological closure). There exist a sequence of subshifts $\subshift_0$, $\subshift_1$, $\ldots \in C$ and a sequence of colorings $\omega_0 \in \subshift_0$, $\omega_1 \in \subshift_1$, \ldots{} such that $\lim_{i \to \infty} \omega_i = \omega$. Since $C$ is compact, we may pass to a subsequence and assume that the sequence $\subshift_0$, $\subshift_1$, \ldots{} converges to a limit $\subshift_\infty \in C$. Consider any $S \in \finset{\G}$ and let $\phi \defeq \rest{\omega}{S}$. As $\omega = \lim_{i \to \infty} \omega_i$, we have
		\[
			\phi = \rest{\omega_i}{S} \text{ for all sufficiently large } i \in \N.
		\]
		This implies
		\[
			\phi \in \FPC{\subshift_i} \text{ for all sufficiently large } i \in \N,
		\]
		and thus, $\phi \in \FPC{\subshift_\infty}$. Therefore, $\omega \in \subshift_\infty$, and hence,
		\[
			\textstyle\bigcup\nolimits_{\subshift \in C} \subshift \,=\, \overline{\textstyle\bigcup\nolimits_{\subshift \in C} \subshift},
		\]
		i.e., the set $\bigcup_{\subshift \in C} \subshift$ is closed, and hence, it is a subshift. The continuity of the map $C \mapsto \bigcup_{\subshift \in C} \subshift$ then follows immediately from the definitions of the topologies on $\S_0(\G, \N)$ and $\K(\S_0(\G, \N))$.
	\end{proof}
	
	Now we have all the necessary tools to finish the proof of Theorem~\ref{theo:complete}.
	
	\begin{proof}[Proof of Theorem~\ref{theo:complete}]
		Let $\alpha \colon \G \acts X$ be a free continuous action of $\G$ on a nonempty Polish space~$X$. As observed in Corollary~\ref{corl:analytic}, the set $\S_\BM(\alpha, \N)$ is analytic. The case of generically smooth $\alpha$ is handled in Lemma~\ref{lemma:smooth}, so it remains to show that if $\alpha$ is not generically smooth, then $\S_\BM(\alpha, \N)$ is complete.
		
		Let $H$ and $H_0$ be as in Lemma~\ref{lemma:cont} and let $H \to \S_0(\G, \N) \colon h \mapsto \subshift_h$ be a continuous function such that
		\begin{itemize}
			\item[--] for all $h \in H_0$, the subshift $\subshift_h$ is hard; and
			\item[--] for all $h \in H \setminus H_0$, the subshift $\subshift_h$ is easy.
		\end{itemize}
		Since continuous images of compact spaces are compact, for each $C \in \K(H)$, we have
		\[
			\set{\subshift_h \,:\, h \in C} \in \K(\S_0(\G, \N));
		\]
		moreover, the map
		\[
			\K(H) \to \K(\S_0(\G, \N)) \colon C \mapsto \set{\subshift_h \,:\, h \in C} 
		\]
		is continuous. Using Lemma~\ref{lemma:union}, we can then define a continuous function $\K(H) \to \S_0(\G, \N)$ by sending each $C \in \K(H)$ to the subshift $\subshift_C \defeq \bigcup_{h \in C} \subshift_h$. Notice that if $C \cap (H \setminus H_0) \neq \0$, then $\subshift_C \supseteq \subshift_h$ for some $h \in H\setminus H_0$, so $\subshift_C$ is easy and, in particular, $\subshift_C \in \S_\BM(\alpha, \N)$. On the other hand, if  $C \cap (H \setminus H_0) = \0$, i.e., if $C \subseteq H_0$, then $\subshift_C$ is a union of countably many hard subshifts, so, by Lemma~\ref{lemma:hard_union}, it is itself hard; since $\alpha$ is not generically smooth, this implies $\subshift_C \not \in \S_\BM(\alpha, \N)$. Therefore,
		\begin{equation}\label{eq:reduction}
			C \cap (H \setminus H_0) \neq \0 \, \Longleftrightarrow \, \subshift_C \in \S_\BM(\alpha, \N).
		\end{equation}
		Since $H \setminus H_0$ is the complement of a dense countable subset of a nonempty Polish space $H$ with no isolated points, it is $G_\delta$ but not $F_\sigma$; thus, by Theorem~\ref{theo:Hurewicz}, the set
		\begin{equation}\label{eq:Hur}
			\set{C \in \K(H) \,:\, C \cap (H \setminus H_0) \neq \0}
		\end{equation}
		is complete analytic. It remains to notice that, by~\eqref{eq:reduction}, the map $C \mapsto \subshift_C$ is a continuous reduction of the complete analytic set~\eqref{eq:Hur} to $\S_\BM(\alpha, \N)$.
	\end{proof}

	\section{Proof of Theorem~\ref{theo:combi}}\label{sec:combi}
	
	\noindent We break proving Theorem~\ref{theo:combi} up into three steps, each corresponting to one of the implications
	\[\text{\ref{item:combi:BM} $\Longrightarrow$ \ref{item:combi:amalgamation} $\Longrightarrow$ \ref{item:combi:local} $\Longrightarrow$ \ref{item:combi:BM}.}\]
	Using observations made in Section~\ref{sec:ideals}, we phrase and prove each implication in terms of $\G$\=/ideals rather than subshifts. Finally, we deduce Corollary~\ref{corl:not_universal} in~\secsign\ref{subsec:not_universal}.
	
	\subsection{Extendable $\G$-ideals with the join property from Baire measurable colorings}
	
	\begin{lemma}\label{lemma:col_to_join}
		Let $\P \subseteq \finfun{\G}{\N}$ be a $\G$-ideal. If $\sigma$ admits a Baire measurable $\P$-coloring, then there is an extendable $\G$-ideal $\P' \subseteq \P$ with the join property.
	\end{lemma}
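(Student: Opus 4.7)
The plan is to extract from a given Baire measurable $\P$-coloring $f \colon \N^\G \to \N$ of $\sigma$ an extendable $\G$-ideal $\P' \subseteq \P$ by recording which finite partial colorings occur as restrictions of $\pi_f(\omega)$, and then to equip $\P'$ with a radius function witnessing the join property. As a preliminary reduction, apply~\cite[Theorem~8.38]{Kec95} to obtain a comeager $G_\delta$ set on which $f$ is continuous; intersecting with $\bigcap_{\gamma \in \G} \gamma \cdot \pi_f^{-1}(\Sol(\P))$, fix a shift-invariant comeager set $Y \subseteq \N^\G$ on which $\rest{f}{Y}$ is continuous and with $\pi_f(\omega) \in \Sol(\P)$ for all $\omega \in Y$. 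For $\phi \in \finfun{\G}{\N}$ set
\[
P_\phi \,\defeq\, \set{\omega \in \N^\G \,:\, \pi_f(\omega) \supseteq \phi} \,=\, \bigcap\nolimits_{\gamma \in \dom(\phi)} \gamma^{-1} \cdot f^{-1}(\phi(\gamma)),
\]
and define $\P' \defeq \set{\phi \in \finfun{\G}{\N} \,:\, P_\phi \cap Y \neq \0}$.

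That $\P'$ is shift-invariant and closed under restrictions is immediate. Extendability follows because, given $\phi \in \P'$ and $\gamma \in \G \setminus \dom(\phi)$, any $\omega \in P_\phi \cap Y$ witnesses $\phi \cup \set{(\gamma, f(\gamma \cdot \omega))} \in \P'$; the inclusion $\P' \subseteq \P$ is immediate from $\pi_f(Y) \subseteq \Sol(\P)$. For the join property, define
\[
R(\phi) \,\defeq\, \min\set{r \in \N \,:\, \exists\, \psi \in \finfun{\G}{\N} \text{ with } \dom(\psi) \subseteq \Ball{\dom(\phi)}{r} \text{ and } [\psi] \Vdash P_\phi},
\]
where $[\psi] \subseteq \N^\G$ denotes the basic clopen cylinder determined by $\psi$. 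Invariance $R(\gamma \cdot \phi) = R(\phi)$ follows from right-invariance of $\dist$ (which yields $\gamma \cdot \Ball{S}{r} = \Ball{\gamma \cdot S}{r}$) together with the fact that shifts are homeomorphisms of $\N^\G$, so $[\psi] \Vdash P_\phi$ if and only if $[\gamma \cdot \psi] \Vdash P_{\gamma \cdot \phi}$.

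The main technical point is showing $R(\phi) < \infty$, which is where continuity of $\rest{f}{Y}$ enters: for any fixed $\omega \in P_\phi \cap Y$ and each $\gamma \in \dom(\phi)$, continuity of $f$ at $\gamma \cdot \omega$ yields a finite set $F_\gamma \subseteq \G$ with the property that every $\omega' \in Y$ agreeing with $\omega$ on $F_\gamma \gamma$ satisfies $f(\gamma \cdot \omega') = \phi(\gamma)$; consequently $\psi \defeq \rest{\omega}{\bigcup_{\gamma \in \dom(\phi)} F_\gamma \gamma}$ is a finite partial coloring with $[\psi] \Vdash P_\phi$, and any $r \in \N$ containing its domain in $\Ball{\dom(\phi)}{r}$ bounds $R(\phi)$. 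The join property is then almost automatic: if $\phi_1, \ldots, \phi_k \in \P'$ are pairwise $R$-separated, then witnesses $\psi_i$ with $\dom(\psi_i) \subseteq \Ball{\dom(\phi_i)}{R(\phi_i)}$ have pairwise disjoint domains, so $\psi \defeq \psi_1 \cup \ldots \cup \psi_k$ is well-defined, and $[\psi] = \bigcap_i [\psi_i] \Vdash \bigcap_i P_{\phi_i} = P_{\phi_1 \cup \ldots \cup \phi_k}$, whence this last set is nonmeager and meets $Y$, giving $\phi_1 \cup \ldots \cup \phi_k \in \P'$. I expect the only delicate part to be reconciling invariance and finiteness of $R$ through the right-action conventions; everything else reduces to careful bookkeeping with the right-invariance of $\dist$.
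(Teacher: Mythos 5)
Your proof is correct and takes essentially the same approach as the paper: defining $\P'$ via which cylinders $P_\phi = \pi_f^{-1}(U_\phi)$ are nonmeager, introducing a radius function $R(\phi)$ that measures how far from $\dom(\phi)$ a forcing cylinder must extend, and deducing the join property from the disjointness of those cylinders' domains for $R$-separated pieces. The only differences are cosmetic: you pass to a comeager set of continuity upfront and argue finiteness of $R$ via continuity, while the paper keeps the nonmeagerness formulation and invokes the Baire alternative, and you allow $\dom(\psi) \subseteq \Ball{\dom(\phi)}{r}$ where the paper insists on equality; neither affects the argument.
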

	\begin{proof}
		Using Theorem~\ref{theo:gen_iso}, identify~$\sigma$ with the shift action $\sigma_\N \colon \G\acts \N^\G$. For $\phi \in \finfun{\G}{\N}$, let
		\[
		U_\phi \defeq \set{\omega \in \N^\G \,:\, \omega \supset \phi}.
		\]
		Note that $\set{U_\phi \,:\, \phi \in \finfun{\G}{\N}}$ is a basis for the topology on $\N^\G$ consisting of nonempty open (and closed) sets.
		
		Let $\P \subseteq \finfun{\G}{\N}$ be a $\G$-ideal and let $f \colon \N^\G \to \N$ be a Baire measurable $\P$-coloring of $\N^\G$. Set $\pi \defeq \pi_f$ and define
		\[
			\P' \defeq \set{\phi \in \finfun{\G}{\N} \,:\, \text{the set } \pi^{-1}(U_\phi) \text{ is nonmeager}}.
		\]
		It is clear that $\P'$ is a $\G$-ideal and, by the choice of $f$, we have $\P' \subseteq \P$.
		
		We claim that $\P'$ is extendable. Indeed, let $\phi \in \P'$ and $\gamma \in \G \setminus \dom(\phi)$. For each $c \in \N$, set $\phi_c \defeq \phi \cup \set{(\gamma, c)}$. We need to show that $\phi_c \in \P'$ for some $c \in \N$. To that end, notice that
			\begin{equation}\label{eq:split}
				\pi^{-1}(U_\phi) \,=\, \bigcup_{c \in \N} \pi^{-1}(U_{\phi_c}).
			\end{equation}
		Since $\phi \in \P'$, the set on the left-hand side of~\eqref{eq:split} is nonmeager; thus, at least one of the sets whose union is taken on the right-hand side of~\eqref{eq:split} must also be nonmeager, as desired.
		
		To finish the proof of the lemma, it remains to show that $\P'$ has the join property. Define an invariant map $R \colon \P' \to \Rpos$ as follows: For each $\phi \in \P'$, set $R(\phi)$ to be the smallest $R \in \N$ such that there is a map $\psi \colon \mathbf{Ball}(\dom(\phi), R) \to \N$ with $U_\psi \Vdash \pi^{-1}(U_\phi)$.
		(Such $R$ exists since the set $\pi^{-1}(U_\phi)$ is nonmeager.) Suppose that $\phi_1$, \ldots, $\phi_k \in \P'$ are pairwise $R$\=/separated and let $\phi \defeq \phi_1 \cup \ldots \cup \phi_k$. For each $1 \leq i \leq k$, choose $\psi_i \colon \mathbf{Ball}(\dom(\phi_i), R(\phi_i)) \to \N$ so that $U_{\psi_i} \Vdash \pi^{-1}(U_{\phi_i})$. Since $\phi_1$, \ldots, $\phi_k$ are pairwise $R$-separated, for all $i \neq j$, we have
		\[
			\dom(\psi_i) \cap \dom(\psi_j) \,=\, \mathbf{Ball}(\dom(\phi_i), R(\phi_i)) \cap \mathbf{Ball}(\dom(\phi_j), R(\phi_j)) \,=\, \0,
		\] 
		so $\psi \defeq \psi_1 \cup \ldots \cup \psi_k$ is a function in $\finfun{\G}{\N}$. Then
		\[
			U_\psi \,=\, U_{\psi_1} \cap \ldots \cap U_{\psi_k} \,\Vdash\, \pi^{-1}(U_{\phi_1}) \cap \ldots \cap \pi^{-1}(U_{\phi_k}) \,=\, \pi^{-1}(U_\phi).
		\]
		Therefore, the set $U_\phi$ is nonmeager, i.e., $\phi \in \P'$, as desired.
	\end{proof}
	
	\subsection{Reducing extendable $\G$-ideals with the join property to local ones}
	
	\begin{lemma}\label{lemma:red_to_loc}
		Every extendable $\G$-ideal with the join property is reducible to a local extendable $\G$-ideal.
	\end{lemma}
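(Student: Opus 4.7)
The plan is to augment the color alphabet from $\N$ to $\N \times \N$ (identified with $\N$ via a fixed bijection), and to take as reduction the first projection $\rho \colon \N \times \N \to \N$. The second coordinate of a color will play the role of a \emph{self\=/certifying radius}: putting the color $(c, n)$ at $\gamma$ asserts that, within $\Ball{\gamma}{n}$, the underlying partial $\P$\=/coloring $\rho \circ \phi'$ lies in $\P$ with $R$\=/value at most $n$. Because this is a manifestly local condition, setting $r'(c, n) \defeq n$ (possibly scaled by a small constant) will make the identity $\P' = \loc{\P'}{r'}$ essentially built in.

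Before defining $\P'$, I would first replace $R$ by its monotone hull $R^*(\phi) \defeq \max\set{R(\psi) \,:\, \psi \subseteq \phi}$, rounded up to integer values. The modified $R^*$ is still invariant and still witnesses the join property, since $R^* \geq R$ only strengthens the separation requirement, but in addition it is monotone under restriction, which will make closure of $\P'$ under restrictions automatic. I would then take $\P'$ to consist of all $\phi' \in \finfun{\G}{\N \times \N}$ satisfying, at every $\gamma \in \dom(\phi')$ with $\phi'(\gamma) = (c_\gamma, n_\gamma)$, the local condition above together with a \emph{priority clause} demanding $n_\delta \leq n_\gamma$ whenever $\delta \in \dom(\phi')$ and $\delta \in \Ball{\gamma}{K n_\gamma}$, for some fixed sufficiently large constant $K$. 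Shift\=/invariance of $\P'$ is immediate from invariance of $R^*$, and restriction\=/closure follows from monotonicity of $R^*$ together with the fact that the priority clause only weakens upon shrinking the domain. The reduction property $\rho \circ \phi' \in \P$ for $\phi' \in \P'$ is then obtained by greedily selecting $\gamma_1, \ldots, \gamma_k \in \dom(\phi')$ in decreasing order of $n_{\gamma_i}$, each $\gamma_i$ picked outside $\Ball{\gamma_j}{K n_{\gamma_j}}$ for all previously chosen $\gamma_j$: the priority clause forces $\dom(\phi')$ to be covered by $\bigcup_i \Ball{\gamma_i}{n_{\gamma_i}}$, the greedy spacing guarantees pairwise $R^*$\=/separation of the local views, and the join property delivers $\rho \circ \phi' \in \P$.

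The main obstacle is verifying extendability of $\P'$. Given $\phi' \in \P'$ and a new $\gamma_0 \in \G \setminus \dom(\phi')$, the priority clause at existing neighbors whose enlarged ball contains $\gamma_0$ caps $n_0$ from above, the local $\P$\=/condition at $\gamma_0$ pushes $n_0$ upward, and the single choice of $c_0$ must be simultaneously compatible with every existing local view that now absorbs $\gamma_0$. The plan is to single out a \emph{dominant} existing neighbor $\gamma^\ast$---by the priority clause, every other local view whose ball contains $\gamma_0$ is itself contained in that of $\gamma^\ast$---and then apply extendability of $\P$ to the single local view at $\gamma^\ast$ to read off a $c_0$ that works for all affected neighbors at once; the radius $n_0$ is then chosen small enough to respect the cap imposed by $\gamma^\ast$ and large enough for the $R^*$\=/bound at $\gamma_0$ to hold (falling back to $n_0 = 0$ with the singleton $\set{(\gamma_0, c_0)}$ or the empty partial coloring $\0 \in \P$ when no neighbor is nearby). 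The delicate technical point will be calibrating the constant $K$ in the priority clause so that the greedy covering used in the reduction argument and the dominance structure needed for extendability can both be arranged simultaneously.
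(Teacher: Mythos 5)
Your plan shares the overall architecture of the paper's proof: augment the color alphabet to $\N \times \N$, let one coordinate record a self\=/certifying radius, take a projection as the reduction, monotonize $R$, decompose a coloring in $\P'$ into $R$\=/separated local views, and prove extendability by sending the radius parameter to infinity. But the mechanism you propose for resolving conflicts between nearby certificates is different, and it is where the argument breaks down. The paper does \emph{not} impose any constraint forbidding nearby elements with larger radius parameter; instead, it \emph{filters} them out of the local checks. Concretely, writing $h$ for the radius parameter of $\gamma$, the paper's local condition at $\gamma$ inspects only the restriction of $\phi$ to $\Ball{\gamma}{3h}$ \emph{intersected with the set of $\delta$ whose own parameter is $\leq h$}, together with a constraint that this filtered view has domain inside $\Ball{\gamma}{h}$. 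Because a new element inserted with a strictly larger parameter is simply invisible to every pre\=/existing local check, one can take the parameter of the new element as large as one likes; the $R$\=/bound and domain\=/containment constraints at the new element are then satisfiable by inflating the parameter, and extendability is immediate.

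Your priority clause replaces filtering with a prohibition: no element with larger $n$\=/value may appear inside $\Ball{\gamma}{K n_\gamma}$. This is strictly more restrictive and breaks extendability exactly in the situation you flag as ``delicate.'' If a new $\gamma_0$ lies in $\Ball{\delta}{K n_\delta}$ for an existing $\delta$, the clause caps $n_0 \leq n_\delta$; but the local condition at $\gamma_0$ requires $R^*$ of the local view (which always contains $(\gamma_0, c_0)$ itself) to be $\leq n_0$, and there is no reason a color $c_0$ with $R^*(\set{(\mathbf{1}, c_0)}) \leq n_\delta$ exists, let alone one compatible with all affected neighboring views. The proposed fallback $n_0 = 0$ does not help: the local view at radius $0$ is $\set{(\gamma_0, c_0)}$, not $\0$, and requiring $R^*(\set{(\gamma_0, c_0)}) \leq 0$ is generally impossible. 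The ``dominance'' claim is also false as stated: if $\gamma_0 \in \Ball{\delta_1}{n_{\delta_1}} \cap \Ball{\delta_2}{n_{\delta_2}}$ with $n_{\delta_1} \geq n_{\delta_2}$, the triangle inequality gives only $\dist(\delta_1, \delta_2) \leq n_{\delta_1} + n_{\delta_2}$, which does \emph{not} yield $\Ball{\delta_2}{n_{\delta_2}} \subseteq \Ball{\delta_1}{n_{\delta_1}}$, so a single application of extendability at $\gamma^\ast$ does not certify the other local views. Finally, in the reduction step the greedy $K$\=/balls cover $\dom(\phi')$, but the priority clause only gives $n_\delta \leq n_{\gamma_j}$ for $\delta$ in the $K$\=/ball of $\gamma_j$, not $\delta \in \Ball{\gamma_j}{n_{\gamma_j}}$; so the union of the certified local views need not exhaust $\rho \circ \phi'$, and the join property applied to them proves membership in $\P$ only for a sub\=/coloring. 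The paper avoids both issues by including the explicit constraint $\dom(\psi) \subseteq \Ball{\gamma}{h}$ (creating an empty annulus that drives the inductive decomposition in its Claim) and by filtering rather than forbidding. You would need to replace the priority clause with a filter by parameter value (only inspect $\delta$ with $n_\delta \leq n_\gamma$) and add the domain\=/containment constraint to make the argument go through.
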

	\begin{proof}
		Let $\P \subseteq \finfun{\G}{\N}$ be an extendable $\G$-ideal with the join property and let $R \colon \P \to \Rpos$ be an invariant function such that whenever $k \in \N$ and $\phi_1$, \ldots, $\phi_k \in \P$ are pairwise $R$\=/separated, we have
		$\phi_1 \cup \ldots \cup \phi_k \in \P$.
		We may assume that $R$ is monotone increasing, i.e., for all $\phi$, $\phi' \in \P$,
		\[
			\phi \subseteq \phi' \,\Longrightarrow\, R(\phi) \leq R(\phi').
		\]
		Otherwise we can replace $R$ with the map $\tilde{R} \colon \P \to \Rpos$ defined by
		\[
			\tilde{R}(\phi) \defeq \sup \set{R(\phi') \,:\, \phi' \subseteq \phi}.
		\]
		
		We will explicitly construct a local extendable $\G$-ideal $\P'$ such that $\P \succeq \P'$. It will be more convenient to view $\P'$ as a subset of $\finfun{\G}{(\N \times \N)}$ rather than $\finfun{\G}{\N}$ (of course, we can turn it into a subset of $\finfun{\G}{\N}$ using an arbitrary bijection between $\N\times \N$ and $\N$). Let $\pi_1$, $\pi_2 \colon \N \times\N \to \N$ denote the projection maps:
		$$
			\pi_1(h, c) \defeq h \;\;\;\text{and}\;\;\; \pi_2(h, c) \defeq c\;\;\; \text{for all}\;\;\; (h, c) \in \N \times \N.
		$$
		Given $\phi \in \finfun{\G}{(\N \times \N)}$, an element $\gamma \in \G$, a radius $r \in \Rpos$, and a threshold $h \in \N$, define
		\[
			\phi[\gamma, r; h] \defeq \rest{\phi}{\set{\delta \in \dom(\phi) \cap \Ball{\gamma}{r} \,:\, (\pi_1 \circ \phi)(\delta) \leq h}}.
		\] 
		By definition, $\phi[\gamma, r; h] \subseteq \phi[\gamma, r]$. Let $\P'$ denote the set of all partial maps $\phi \in \finfun{\G}{(\N \times \N)}$ such that the following holds for all $\gamma \in \dom(\phi)$: If we let $h \defeq (\pi_1 \circ \phi)(\gamma)$ and $\psi \defeq \pi_2 \circ (\phi[\gamma, 3h; h])$, then
		\[
			\dom(\psi) \subseteq \Ball{\gamma}{h}; \;\;\;\;\;\;\; \psi \in \P; \;\;\;\;\;\;\; \text{and} \;\;\;\;\;\;\; R(\psi) \leq h.
		\]
		\noindent Evidently, $\P'$ is invariant under the action $\G \acts \finfun{\G}{(\N \times \N)}$. Moreover, since the map $R$ is monotone increasing, $\P'$ is closed under restrictions; in other words, $\P'$ is a $\G$-ideal. By definition,
		\[
			\P' = \loc{\P'}{r} \;\;\;\text{for}\;\;\; r \colon \N \times \N \to \Rpos \colon (h, c) \mapsto 3h.
		\]
		It remains to verify that $\P'$ is extendable and $\P\succeq\P'$.
		
		\begin{claim}\label{claim:partition}
			Let $\phi \in \P'$ and let
			\[
				h \defeq \sup \set{(\pi_1 \circ \phi)(\gamma) \,:\, \gamma \in \dom(\phi)}.
			\]
			Then $\phi$ can be written as a union 
			$\phi = \phi_1 \cup \ldots \cup \phi_k$ for some $k \in \N$ and $\phi_1$,~\ldots, $\phi_k \in \P'$ with the following properties:
			\begin{enumerate}[label=\normalfont{--}]
				\item for each $1 \leq i \leq k$, the map $\psi_i \defeq \pi_2 \circ \phi_i$ belongs to $\P$;
				\item for each $1 \leq i \leq k$, we have $R(\psi_i) \leq h$;
				\item the maps $\psi_1$, \ldots, $\psi_k$ are pairwise $R$-separated.
			\end{enumerate}
		\end{claim}
		\begin{claimproof}
			The proof is by induction on $|\dom(\phi)|$. If $\phi = \0$, then the claim holds vacuously with $k = 0$. Now suppose that $\phi \neq \0$. Then there is $\gamma_0 \in \dom(\phi)$ such that $(\pi_1 \circ \phi)(\gamma_0) = h$. Set
			\[
				\phi_0 \defeq \phi[\gamma_0, 3h]; \;\;\;\;\;\;\; \psi_0 \defeq \pi_2 \circ \phi_0; \;\;\;\;\;\;\;\text{and}\;\;\;\;\;\;\; \phi' \defeq \phi \setminus \phi_0.
			\]
			By the choice of $h$, we have $\phi[\gamma_0, 3h; h] = \phi_0$. Thus, by the definition of $\P'$,
			\[
				\dom(\psi_0) \subseteq \Ball{\gamma_0}{h}; \;\;\;\;\;\;\; \psi_0 \in \P; \;\;\;\;\;\;\;\text{and}\;\;\;\;\;\;\; R(\psi_0) \leq h.
			\]
			Applying the inductive hypothesis to $\phi'$, we can write
			$
				\phi' = \phi_1 \cup \ldots \cup \phi_k
			$
			for some $\phi_1$, \ldots, $\phi_k \in \P'$ with the following properties:
			\begin{itemize}
				\item[--] for each $1 \leq i \leq k$, the map $\psi_i \defeq \pi_2 \circ \phi_i$ belongs to $\P$;
				\item[--] for each $1 \leq i \leq k$, we have $R(\psi_i) \leq h$;
				\item[--] the maps $\psi_1$, \ldots, $\psi_k$ are pairwise $R$-separated.
			\end{itemize}
			It remains to show that $\psi_0$ is $R$-separated from each $\psi_i$ with $1 \leq i \leq k$. Suppose, towards a contradiction, that for some $1 \leq i \leq k$,
			\[
				\dist(\dom(\psi_0), \dom(\psi_i)) \,\leq\, R(\psi_0) + R(\psi_i) \,\leq\, 2h.
			\]
			Let $\gamma \in \dom(\psi_0)$ be such that $\dist(\gamma, \dom(\psi_i)) \leq 2h$. Since $\dom(\psi_0) \subseteq \Ball{\gamma_0}{h}$, we obtain
			\[
				\dist(\gamma_0, \dom(\psi_i)) \,\leq\, \dist(\gamma_0, \gamma) + \dist(\gamma, \dom(\psi_i)) \,\leq\, h + 2h \,=\, 3h.
			\]
			On the other hand, by construction, $\dom(\psi_i) \cap \Ball{\gamma_0}{3h} = \0$. This contradiction completes the proof.
		\end{claimproof}
		
		Consider any $\phi \in \P'$ and let $\phi = \phi_1 \cup \ldots \cup \phi_k$ be a decomposition of $\phi$ given by Claim~\ref{claim:partition}. For each $1 \leq i \leq k$, let $\psi_i \defeq \pi_2 \circ \phi_i$. Then every $\psi_i$ belongs to $\P$ and $\psi_1$, \ldots, $\psi_k$ are pairwise $R$-separated. By the choice of $R$, this yields
		\[
			\pi_2 \circ \phi \,=\, \psi_1 \cup \ldots \cup \psi_k \,\in\, \P.
		\]
		Therefore, $\pi_2$ is a reduction of $\P$ to $\P'$.
		
		Finally, to see that $\P'$ is extendable, let $\phi \in \P'$ and let $\gamma \in \G \setminus \dom(\phi)$. Set $\psi \defeq \pi_2 \circ \phi$. We already know that $\psi \in \P$. Since $\P$ is extendable, there is $c \in \N$ such that $\psi' \defeq \psi \cup \set{(\gamma, c)} \in \P$. Choose $h \in \N$ so large that the following statements are true:
		\[
			h \geq R(\psi'); \;\;\;\;\;\;\; h > (\pi_1 \circ \phi)(\delta) \text{ for all } \delta \in \dom(\phi);\;\;\;\;\;\;\; \text{and}\;\;\;\;\;\;\; \Ball{\gamma}{h} \supseteq \dom(\psi').
		\]
		Then $\phi \cup \set{(\gamma, (h, c))} \in \P'$, as desired.
	\end{proof}
	
	\subsection{Baire measurable colorings from locality and extendability}
	
	Before proceeding with the last part of the proof of Theorem~\ref{theo:combi}, we introduce some terminology and notation related to partial (but not necessarily finite) maps $\phi \colon \G \rightharpoonup \N$. The set of all such maps is denoted by $\pfun{\G}{\N}$. A partial map $\phi \colon \G \rightharpoonup \N$ can be viewed as a (total) function $\phi \colon \G \to \N \cup \set{\text{undefined}}$, where ``$\text{undefined}$'' is a special symbol distinct from all the elements of $\N$. In that way,
	\[
		\pfun{\G}{\N} \;\;\;\text{is the same as}\;\;\;	(\N \cup \set{\text{undefined}})^\G,
	\]
	and the latter set carries the product topology (the topology on $\N \cup \set{\text{undefined}}$ is discrete) and is equipped with the shift action of $\G$. Note that $\finfun{\G}{\N}$ is a countable dense subset of $\pfun{\G}{\N}$ and $\N^\G$ is a closed subset of $\pfun{\G}{\N}$.
	
	Similarly to the notation we use for finite partial functions, given $\phi \in \pfun{\G}{\N}$, an element $\gamma \in \G$, and a radius $r \in \Rpos$, let
	\[
		\phi[\gamma, r] \defeq \rest{\phi}{(\dom(\phi) \cap \Ball{\gamma}{r})}.
	\]
	By definition, $\phi[\gamma, r] \in \finfun{\G}{\N}$.
	
	Let $\P \subseteq \finfun{\G}{\N}$ be a $\G$-ideal. A \emph{partial $\P$-coloring} is a map $\phi \in \pfun{\G}{\N}$ such that
	\[
		\rest{\phi}{S} \in \P\;\;\; \text{for all} \;\;\;S \in \finset{\dom(\phi)}.
	\]
	The set of all partial $\P$-colorings is denoted by $\pcol{\G}{\N}{\P}$. Note that
	\[
		\pcol{\G}{\N}{\P} \, \cap\, \finfun{\G}{\N} \,=\, \P \;\;\;\;\;\text{and}\;\;\;\;\; \pcol{\G}{\N}{\P} \, \cap\, \N^\G \,=\, \Sol(\P).
	\]
	If $\P$ is local and $r \colon \N \to \Rpos$ is a function such that $\P = \loc{\P}{r}$, then for all $\phi \in \pfun{\G}{\N}$,
	\begin{equation}\label{eq:using_local}
	\phi \in \pcol{\G}{\N}{\P} \;\;\;\Longleftrightarrow\;\;\; \phi[\gamma, r(\phi(\gamma))] \in \P \text{ for all } \gamma \in \dom(\phi).
	\end{equation}
	
	\begin{lemma}\label{lemma:loc_to_col}
		If $\P$ is a local extendable $\G$-ideal, then $\sigma$ admits a Baire measurable $\P$\=/coloring.
	\end{lemma}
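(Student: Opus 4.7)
The plan is to construct $f$ by a recursive ``greedy'' procedure guided by a Baire measurable scheduler produced by Lemma~\ref{lemma:MU}. Fix a monotone $r \colon \N \to \Rpos$ witnessing $\P = \loc{\P}{r}$. Apply Lemma~\ref{lemma:MU} to (the free part of) the shift action $\sigma$, with a rapidly growing sequence $(d_c)_{c \in \N}$ to be specified below, obtaining a Baire measurable map $g \colon Y \to \N$ on a comeager $\sigma$-invariant $Y \subseteq \N^\G$ such that distinct points $x \neq y$ in $Y$ with $g(x) = g(y) = c$ satisfy $\dist(x, y) > d_c$. With $g$ in hand, define $f$ stage by stage: at stage $c = 0, 1, 2, \ldots$, for each $x \in Y$ with $g(x) = c$, apply extendability of $\P$ to the finite partial coloring accumulated by earlier stages restricted to a ball of some radius $D_c$ around the position of $x$ in its orbit, and let $f(x)$ be, say, the smallest color extending this finite piece.

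For the recursion to be coherent, the parameters $(d_c)$ must be specified in advance so as to control the construction. I would choose them inductively together with auxiliary bounds $N_c$ on the colors used at stage $c$ and lookahead radii $D_c$: once $N_0, \ldots, N_{c-1}$ and $D_0, \ldots, D_{c-1}$ are fixed, set $D_c \defeq 2 r(N_{c-1}) + 1$, enumerate the finitely many $\P$-valid ``patterns'' of prior-stage points in a ball of radius $D_c$ (there are only finitely many since prior colors come from a finite set by induction), and let $N_c$ be the maximum, over such patterns, of the smallest color produced by extendability. Finally set $d_c \defeq 2 D_{c+1}$ or larger, so that at later stages the ball of radius $D_{c'}$ ($c' > c$) around any point contains at most one point of $g$-color $c$. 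Correctness is verified by induction using the characterization \eqref{eq:using_local}: at each $\delta \in \dom(\phi_{\leq c})$, the ball $\phi_{\leq c}[\delta, r(\phi_{\leq c}(\delta))]$ is contained in the $D_c$-ball around some newly added point $\gamma \in A_c$, and hence lies in $\P$ because extendability was applied with lookahead $D_c$. Baire measurability of $f$ follows because $f$ is a countable union of Borel stagewise pieces, each defined from $g$ and the prior stages.

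The main obstacle is precisely the coordination of the parameters $(d_c, D_c, N_c)$. The delicate point is that the locality radius $r(N_c)$ of a newly chosen color can in principle exceed the lookahead $D_c$ employed during extendability, which would break the invariant ``partial coloring so far is in $\pcol{\G}{\N}{\P}$.'' Handling this requires the ball used in extendability to be taken large enough that it already dominates $r$ applied to any color extendability could produce given a pattern of that diameter; this is a fixed-point style condition that must be justified by a careful finiteness argument exploiting both the monotonicity of $r$ and the fact that, at each stage, only finitely many ball-patterns can occur. Once this coordination is in place, the induction goes through and yields the desired Baire measurable $\P$-coloring of $\sigma$.
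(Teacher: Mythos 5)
Your approach---a Baire-measurable scheduler from Lemma~\ref{lemma:MU} followed by a greedy coloring in scheduled order---is genuinely different from the paper's, and it has a gap at exactly the point you flag but do not close. The circularity in choosing $D_c$ is not merely delicate; it is fatal to the scheme as stated. You need $D_c \geq r(N_c)$, where $N_c$ is the maximum, over $\P$-valid patterns on a ball of radius $D_c$ with colors at most $N_{c-1}$, of the least extending color. But $N_c$ is a \emph{nondecreasing} function of $D_c$: enlarging the ball enlarges the set of patterns, and (since $\P$ is closed under restrictions) any sub-pattern has a least extending color no larger than that of the pattern itself, so the maximum can only grow. As $r$ is also nondecreasing, you are asking for a $D$ with $D \geq r(N_c(D))$ for a nondecreasing $\Rpos$-valued function of $D$, and such a $D$ need not exist. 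When it does not, the invariant already fails at the newly colored point $\gamma$ itself: the annulus between radii $D_c$ and $r(N)$ around $\gamma$ may contain previously placed points that were never tested against the new value, so $\phi_{\leq c}[\gamma, r(N)]\in\P$ is simply not guaranteed. Your finiteness observation bounds $N_c$ \emph{for a given} $D_c$; it says nothing about the existence of the required fixed point.

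The root cause is that your scheduler commits each point to a \emph{single} stage at which it must be colored, and the locality radius of a greedily chosen color cannot be known before the lookahead is fixed. The paper's construction avoids this entirely: it runs on $(2^\G)^\N$, and at step $i+1$ offers only the one fixed color $c_i$ to points that are uniquely flagged by the coordinate $x_i$ \emph{and} for which the local safety check at radius $2R_i$ passes; a point that cannot safely take $c_i$ just waits for a later step. Since each color is offered infinitely often and extendability (applied to the \emph{finite} partial coloring accumulated after any finite prefix of steps) guarantees that some offered color will eventually be safe, a generic point is eventually colored; and since the color offered at step $i$ is $c_i$ and hence known in advance, the radius $R_i$ depends only on $r(c_0),\ldots,r(c_{i-1})$ and involves no fixed-point condition. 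To repair your argument you would have to replace ``commit at the scheduled stage to the smallest extending color'' with a mechanism that lets a point defer to a later round carrying a pre-announced color---which is, in essence, what the paper does.
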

	\begin{proof}
		Let $\P \subseteq \finfun{\G}{\N}$ be a local extendable $\G$-ideal and let $r \colon \N \to \Rpos$ be a function such that $\P = \loc{\P}{r}$.
		
		Using Theorem~\ref{theo:gen_iso}, identify $\sigma$ with the shift action $\sigma_{2^\N} \colon \G \acts (2^\N)^\G$ and then replace it by the product action $(\sigma_2)^\N \colon \G \acts (2^\G)^\N$ (the spaces $(2^\N)^\G$ and $(2^\G)^\N$ are equi\-va\-riantly homeomorphic).
		We will find an equivariant Borel map $\pi \colon (2^\G)^\N \to \pcol{\G}{\N}{\P}$ such that the set $\pi^{-1}(\N^\G)$ is comeager. This will yield the desired result since given such $\pi$, any function $f \colon (2^\G)^\N \to \N$ with $f(x) = \pi(x)(\mathbf{1})$ for all $x \in \pi^{-1}(\N^\G)$ is a Baire measurable $\P$-coloring of $(2^\G)^\N$.

		For $x \in 2^\G$, the \emph{support} of $x$ is the set
		\[
			\supp(x) \defeq \set{\gamma \in \G \,:\, x(\gamma) = 1}.
		\]
		Set $X \defeq (2^\G)^\N$. We write the elements of $X$ as sequences of the form $x = (x_0, x_1, \ldots)$.
		
		Fix a sequence $(c_0, c_1, \ldots) \in \N^\N$ in which every $c \in \N$ appears infinitely many times and set
		\[
			R_i \defeq \sup\set{r(c_0), \ldots, r(c_{i-1})}.
		\]
		For each $x \in X$, define a sequence of partial maps $\pi_i(x) \in \pfun{\G}{\N}$ inductively as follows:
		
		\begin{leftbar}
			\noindent {\sc Step $0$:} Set $\pi_0(x) \defeq \0$.
			
			\noindent {\sc Step $i+1$:} Let $S_i(x)$ denote the set of all $\gamma \in \G$ such that
			\begin{itemize}
				\item[--] $\pi_i(x)(\gamma)$ is not defined;
				\item[--] $\Ball{\gamma}{2R_i} \cap \supp(x_i) = \set{\gamma}$; and
				\item[--] $\pi_i(x)[\gamma, 2R_i]  \cup  \set{(\gamma, c_i)} \in\P$.
			\end{itemize}
			For all $\gamma \in \G$, set
			\[
			\pi_{i+1}(x)(\gamma) \defeq \begin{cases}
			\pi_i(x)(\gamma) &\text{if } \pi_i(x)(\gamma) \text{ is defined};\\
			c_i &\text{if }\gamma \in S_i(x);\\
			\text{undefined} &\text{otherwise}.
			\end{cases}
			\]
		\end{leftbar}

		\noindent By construction, for all $x \in X$, we have
		\[
		\0 = \pi_0(x) \subseteq \pi_1(x) \subseteq \ldots,
		\]
		so we can define $\pi_\infty(x) \in \pfun{\G}{\N}$ via
		\[
		\pi_\infty(x) \defeq \bigcup_{i=0}^\infty \pi_i(x).
		\]
		It is clear that the maps $\pi_i \colon X \to \pfun{\G}{\N}$ are equivariant. Notice that they are also continuous. Indeed, the value $\pi_i(x)(\gamma)$---including whether or not it is defined---is determined by the restrictions of the first $i$ functions $x_0$, \ldots, $x_{i-1}$ to the finite set $\Ball{\gamma}{2R_0 + \cdots + 2R_{i-1}}$. Being a pointwise limit of equivariant continuous functions, the map $\pi_\infty \colon X \to \pfun{\G}{\N}$ is equivariant and Borel.
		
		\begin{claim}\label{claim:proper}
			For all $x \in X$, we have $\pi_\infty(x) \in \pcol{\G}{\N}{\P}$.
		\end{claim}
		\begin{claimproof}
			Let $x \in X$. Since $\pi_\infty(x)$ is the union of the increasing sequence $\pi_0(x) \subseteq \pi_1(x) \subseteq \ldots$, it is sufficient (and necessary) to establish that $\pi_i(x) \in \pcol{\G}{\N}{\P}$ for all $i \in \N$. We proceed by induction on~$i$. The base case is trivial since $\pi_0(x) = \0 \in \P$ by definition (recall that $\P$ is local, hence nonempty). Now suppose that $\pi_i(x) \in \pcol{\G}{\N}{\P}$ and consider the partial map $\pi_{i+1}(x)$. By~\eqref{eq:using_local}, it is enough to show that for all $\gamma \in \dom(\pi_{i+1}(x))$,
			\[
				\pi_{i+1}(x)[\gamma, r(\pi_{i+1}(x)(\gamma))] \, \in \, \P.
			\]
			By construction, $\pi_{i+1}(x)$ takes values in the set $\set{c_0, \ldots, c_i}$. Therefore,
			\[
				r(\pi_{i+1}(x)(\gamma)) \leq R_i \;\;\;\text{for all}\;\;\; \gamma \in \dom(\pi_{i+1}(x)).
			\]
			Thus, it suffices to prove that for all $\gamma \in \dom(\pi_{i+1}(x))$,
			\[
				\pi_{i+1}(x)[\gamma, R_i] \, \in \, \P.
			\]
			Consider any $\gamma \in \dom(\pi_{i+1}(x))$. If $\Ball{\gamma}{R_i} \cap S_i(x) = \0$, then
			\[
				\pi_{i+1}(x)[\gamma, R_i] \, = \, \pi_{i}(x)[\gamma, R_i] \,\in\, \P
			\]
			by the inductive hypothesis. Now assume that $\delta \in \Ball{\gamma}{R_i} \cap S_i(x)$. Then $\Ball{\gamma}{R_i} \subseteq \Ball{\delta}{2R_i}$, so it is enough to show
			\[
				\pi_{i+1}(x)[\delta, 2R_i] \, \in \, \P.
			\]
			As $\delta \in S_i(x)$, we have $\pi_{i+1}(x)(\delta) = c_i$ and
			\[
				\delta \,\in\, \Ball{\delta}{2R_i} \,\cap\, S_i(x) \, \subseteq \,\Ball{\delta}{2R_i} \cap \supp(x_i) = \set{\delta},
			\]
			which implies
			\[
				\pi_{i+1}(x)[\delta, 2R_i] \,=\, \pi_{i}(x)[\delta, 2R_i] \,\cup\, \set{(\delta, c_i)} \,\in\, \P.\qedhere
			\]
		\end{claimproof}
		
		Thus, the above construction produces an equivariant Borel map $\pi_\infty \colon X \to \pcol{\G}{\N}{\P}$. To finish the argument, it remains to show that the set $(\pi_\infty)^{-1}(\N^\G)$ is comeager. We have
		\begin{align*}
			(\pi_\infty)^{-1}(\N^\G) \,&=\, \set{x \in X \,:\, \pi_\infty(x)(\gamma) \text{ is defined for all } \gamma \in \G} \,\\
			&=\, \bigcap_{\gamma \in \G} \set{x \in X \,:\, \pi_\infty(x)(\gamma) \text{ is defined}},
		\end{align*}
		so we only need to verify that for each $\gamma \in \G$, the set $\set{x \in X \,:\, \pi_\infty(x)(\gamma) \text{ is defined}}$ is comeager. To that end, consider any $\gamma \in \G$ and write
		\begin{equation*}\label{eq:steps}
			\set{x \in X \,:\, \pi_\infty(x)(\gamma) \text{ is defined}} \,=\, \bigcup_{i=0}^\infty \set{x \in X \,:\, \pi_i(x)(\gamma) \text{ is defined}}.
		\end{equation*}
		By the continuity of $\pi_i$ for all $i \in \N$, the sets $\set{x \in X \,:\, \pi_i(x)(\gamma) \text{ is defined}}$ are open. Therefore, their union is open as well; it remains to show that it is dense. Let $U \subseteq X$ be a nonempty open subset. We need to find an element $x \in U$ such that $\pi_\infty(x)(\gamma)$ is defined. By passing to a smaller open subset if necessary, we may assume that $U$ is of the form
		\[
			U \,=\, U_0 \times \cdots \times U_{i-1} \times 2^\G \times 2^\G \cdots,
		\]
		for some nonempty open subsets $U_0$, \ldots, $U_{i-1} \subseteq 2^\G$. Notice that the set of all functions $\G \to 2$ with finite support is dense in $2^\G$; therefore, for each $0 \leq k < i$, we can choose $y_k \in U_k$ so that $\supp(y_k)$ is finite. Let
		\[
			A \defeq \set{x \in X \,:\, x_k = y_k \text{ for all } 0 \leq k < i}.
		\]
		By the choice of $y_0$, \ldots, $y_{i-1}$, we have $\0 \neq A \subseteq U$. Since for all $x \in X$, the value $\pi_i(x)$ is determined by the first $i$ functions $x_0$, \ldots, $x_{i-1}$, we can define $\phi \in \pcol{\G}{\N}{\P}$ by
		\[
			\phi \defeq \pi_i(x) \;\;\; \text{for some (hence all)}\;\;\; x \in A.
		\]
		If $\gamma \in \dom(\phi)$, then $\pi_\infty(x)(\gamma)$ is defined for all $x \in A$ and we are done, so assume that $\gamma \not \in \dom(\phi)$. Since
		\[
			\dom(\phi) \,\subseteq\, \supp(x_0) \cup \ldots \cup \supp(x_{i-1}),
		\]
		the domain of $\phi$ is finite, i.e., $\phi \in \P$. The $\G$-ideal $\P$ is extendable, so there is $c \in \N$ such that
		\[
			\psi \,\defeq\, \phi \cup \set{(\gamma, c)} \,\in\, \P.
		\] By the choice of the sequence $(c_0, c_1, \ldots)$, there is some index $j \geq i$ such that $c_j = c$. For all $i \leq k < j$, set $y_k \colon \G \to 2$ to be the constant $0$ function, and set $y_j \colon \G \to 2$ to be the characteristic function of the one-element set $\set{\gamma}$.
		Let
		\[
			B \defeq \set{x \in X \,:\, x_k = y_k \text{ for all } 0 \leq k \leq j}.
		\]
		Then $\0 \neq B \subseteq A$, and for all $x \in B$, we have $\pi_{j+1}(x) = \psi$, in particular, $\pi_\infty(x)(\gamma)$ is defined.
	\end{proof}
	
	\subsection{Proof of Corollary~\ref{corl:not_universal}}\label{subsec:not_universal}
	
	A continuous action $\alpha$ of $\G$ on a compact space is \emph{minimal} if every $\alpha$-orbit is dense. It follows from a result of Gao, Jackson, and Seward~\cite[Theorem~1.4.1]{GJS16} that $\G$ admits a free minimal action on a nonempty compact metrizable space.
	
	Recall the following notation, introduced in~\secsign\ref{subsec:com_lem}: For a free action $\alpha \colon \G \acts X$ and $x$, $y \in X$, write
	\[
	\dist(x, y) \defeq \begin{cases}
	\dist(\mathbf{1}, \gamma) &\text{if } \gamma \in \G \text{ is such that } \gamma \cdot x = y;\\
	\infty &\text{if } x \text{ and } y \text{ are in different $\alpha$-orbits}.
	\end{cases}
	\]
	Given $x \in X$ and $A \subseteq X$, let
	\[
		\dist(x, A) \defeq \inf\set{\dist(x, y) \,:\, y \in A}.
	\]
	
	\begin{lemma}\label{lemma:req}
		Let $\alpha \colon \G \acts X$ be a free minimal action of $\G$ on a nonempty compact metrizable space $X$. Suppose that $A \subseteq X$ is a nonmeager Baire measurable set. Then there exists a radius $R \in \Rpos$ such that the set $\set{x \in X \,:\, \dist(x, A) \leq R}$ is comeager.
	\end{lemma}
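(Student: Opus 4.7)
The plan is to combine the Baire alternative with minimality and compactness to cover $X$ by finitely many translates of a ``good'' open set, and then transfer the conclusion about $A$ across these translates using the fact that the action is by homeomorphisms.

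First, I would invoke the Baire alternative: since $A$ is Baire measurable and nonmeager, there is a nonempty open set $U \subseteq X$ with $U \Vdash A$, i.e., $U \setminus A$ is meager. Next, I would use minimality: for every $x \in X$, the orbit $\G \cdot x$ is dense, so $\gamma \cdot x \in U$ for some $\gamma \in \G$, equivalently $x \in \gamma^{-1} \cdot U$. Therefore $X = \bigcup_{\gamma \in \G} \gamma \cdot U$, and since $X$ is compact while each $\gamma \cdot U$ is open, finitely many translates suffice: $X = \gamma_1 \cdot U \cup \cdots \cup \gamma_n \cdot U$ for some $\gamma_1, \ldots, \gamma_n \in \G$.

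I would then set $R \defeq \max\set{\dist(\mathbf{1}, \gamma_i) \,:\, 1 \leq i \leq n}$. Since each translation $x \mapsto \gamma_i \cdot x$ is a homeomorphism of $X$, it preserves meagerness; hence each $\gamma_i \cdot U \setminus \gamma_i \cdot A = \gamma_i \cdot (U \setminus A)$ is meager, so
\[
	X \setminus \bigcup_{i=1}^n \gamma_i \cdot A \,\subseteq\, \bigcup_{i=1}^n \bigl( \gamma_i \cdot U \setminus \gamma_i \cdot A \bigr)
\]
is a finite union of meager sets, and therefore meager. Finally, for any $x \in \gamma_i \cdot A$ the point $y \defeq \gamma_i^{-1} \cdot x$ lies in $A$ and satisfies $\dist(x, y) = \dist(\mathbf{1}, \gamma_i^{-1}) = \dist(\mathbf{1}, \gamma_i) \leq R$ (using right-invariance together with the symmetry of $\dist$), so $\bigcup_{i=1}^n \gamma_i \cdot A \subseteq \set{x \in X \,:\, \dist(x, A) \leq R}$; thus the latter set is comeager.

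I do not expect a genuine obstacle here: the only substantive idea is the conversion, via compactness, of the orbit-density guarantee into a finite cover by translates of $U$; after that the argument is routine bookkeeping with the fact that $\G$-translations preserve Baire category.
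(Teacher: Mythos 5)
Your proof is correct and follows essentially the same route as the paper's: Baire alternative to get $U \Vdash A$, minimality to cover $X$ by translates of $U$, compactness to extract a finite subcover, and then the observation that translations preserve Baire category. The paper condenses the bookkeeping by parameterizing the finite subcover by the ball $\Ball{\mathbf{1}}{R}$ instead of listing $\gamma_1, \ldots, \gamma_n$, but the argument is the same.
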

	\begin{proof}
		By the Baire alternative, there is nonempty open $U \subseteq X$ such that $U \Vdash A$. Since $\alpha$ is minimal, we have $X = \bigcup_{\gamma \in \G} (\gamma \cdot U)$. As $X$ is compact, there is a radius $R \in \Rpos$ such that
		\[
			X = \textstyle\bigcup\nolimits_{\gamma \in \Ball{\mathbf{1}}{R}} (\gamma \cdot U).
		\]
		Therefore, the set
		\[
			X' \defeq \textstyle\bigcup\nolimits_{\gamma \in \Ball{\mathbf{1}}{R}} (\gamma \cdot A)
		\]
		is comeager. It remains to notice that $\dist(x, A) \leq R$ for all $x \in X'$.
	\end{proof}
	
	\begin{proof}[Proof of Corollary~\ref{corl:not_universal}]
		Let $\alpha \colon \G \acts X$ be any free minimal action of $\G$ on a nonempty compact metrizable space $X$. We will explicitly construct a $\G$-ideal $\P \subseteq \finfun{\G}{\N}$ such that $\sigma$ admits a Baire measurable $\P$-coloring, while $\alpha$ does not.
		
		Fix a sequence $(d_0, d_1, \ldots) \in \Rpos^\N$ such that a ball of radius $d_0$ in $\G$ contains at least~$2$ elements, and for each $c \in \N$, a ball of radius $d_{c+1}$ in $\G$ contains two disjoint balls of radius $d_c$ (such a sequence exists since $\G$ is infinite, while every ball of finite radius in $\G$ is finite). For each $c \in \N$, choose $D_c \in \Rpos$ so that the set $\set{\gamma \in \G \,:\, 2d_c < \dist(\mathbf{1}, \gamma) \leq D_c}$ contains a ball of radius~$d_c$.
		
		Let $\P$ denote the set of all partial maps $\phi \in \finfun{\G}{\N}$ such that the following holds for all $\gamma \in \dom(\phi)$: If we let $c \defeq \phi(\gamma)$, then for all $\delta \in \dom(\phi)$,
		\begin{enumerate}[label=\normalfont{(\arabic*)}]
			\item\label{item:corl_far} if $\dist(\gamma, \delta) \leq 2d_c$, then $\phi(\delta) \neq c$;
			\item if $2d_c < \dist(\gamma, \delta) \leq D_c$, then $\phi(\delta) > c$.
		\end{enumerate}
		\noindent Clearly, $\P$ is a $\G$-ideal. By definition, we have
		\[
			\P = \loc{\P}{r} \;\;\;\text{for}\;\;\; r \colon \N \to \Rpos \colon c \mapsto D_c,
		\]
		so $\P$ is local. Consider any $\phi \in \P$ and $\gamma \in \G \setminus \dom(\phi)$. Choose $c \in \N$ so large that the following statements are true:
		\[
			c > \phi(\delta) \text{ for all } \delta \in \dom(\phi) \;\;\;\;\;\;\;\text{and}\;\;\;\;\;\;\; \Ball{\gamma}{2d_c} \supseteq \dom(\phi).
		\]
		Then $\phi \cup \set{(\gamma, c)} \in \P$. This shows that $\P$ is extendable. Using Theorem~\ref{theo:combi}, we then conclude that $\sigma$ admits a Baire measurable $\P$-coloring.
		
		Now suppose, towards a contradiction, that $f \colon X \to \N$ is a Baire measurable $\P$-coloring of $\alpha$. Let $c_0 \in \N$ be any color such that the set $A \defeq f^{-1}(c_0)$ is nonmeager. By Lemma~\ref{lemma:req}, there is a radius $R \in \Rpos$ such that the set $\set{x \in X \,:\, \dist(x, A) \leq R}$ is comeager. Since the set $(\pi_f)^{-1}(\Sol(\P))$ is also comeager, we can choose $x \in X$ so that
		\[
			\pi_f(x) \in \Sol(\P) \;\;\;\;\;\;\;\text{and}\;\;\;\;\;\;\; \dist(\gamma \cdot x, A) \leq R \text{ for all } \gamma \in \G.
		\]
		Let $\omega \defeq \pi_f(x)$. Since $\omega \in \Sol(\P)$, Lemma~\ref{lemma:infty} implies that the set $\set{\omega(\gamma)\,:\, \gamma \in \G}$ is infinite; in particular, it contains an element $c$ such that $c \geq c_0$ and $d_c \geq R$. Take any $\gamma \in \G$ with $\omega(\gamma) = c$. By the choice of~$D_c$, there is some $\delta \in \G$ satisfying
		\[
			\Ball{\delta}{R} \,\subseteq\, \Ball{\delta}{d_c} \,\subseteq\, \set{\epsilon \in \G \,:\, 2d_c < \dist(\gamma, \epsilon) \leq D_c}.
		\]
		Since $\omega \in \Sol(\P)$, we have $\omega(\epsilon) > c$ for all $\epsilon \in \Ball{\delta}{R}$; in particular, there is no $\epsilon \in \Ball{\delta}{R}$ with $\omega(\epsilon) = c_0$. But then $\dist(\delta \cdot x, A) > R$; a contradiction.
	\end{proof}
	
	\medskip
	
	{
	\subsection*{Acknowledgements} I am grateful to Anush Tserunyan for insightful conversations and encouragement, and to the anonymous referee for helpful suggestions.
	}
	
	\printbibliography
	
\end{document}